\let\emptyset\varnothing
\theoremstyle{plain}
\newtheorem*{theorem*}{Theorem}
\newtheorem*{principle*}{Principle}
\newtheorem{theorem}{Theorem}
\newtheorem{lemma}[theorem]{Lemma}
\newtheorem{proposition}[theorem]{Proposition}
\newtheorem{corollary}[theorem]{Corollary}
\newtheorem{definition}[theorem]{Definition}
\newtheorem*{lemma*}{Lemma}
\newtheorem*{proposition*}{Proposition}
\newtheorem*{corollary*}{Corollary}
\newtheorem*{conjecture*}{Conjecture}
\newtheorem*{definition*}{Definition}
\theoremstyle{remark}
\newtheorem{remark}[theorem]{Remark}
\newtheorem*{ackn}{Acknowledgments}
\theoremstyle{definition}
\newtheorem{example}{Example}[section]
\tikzstyle{startstop} = [rectangle, rounded corners, minimum width=3cm, minimum height=1cm, text centered, text width= 4cm, draw=black]
\tikzstyle{io} = [rectangle, minimum width=3cm, minimum height=1cm, text centered, text width=3cm, draw=black]
\tikzstyle{process} = [rectangle, minimum width=3cm, minimum height=1cm, text centered, text width=4cm, draw=black]
\tikzstyle{decision} = [rectangle, minimum width=3cm, minimum height=1cm, text centered, text width=4cm, draw=black]
\tikzstyle{arrow} = [thick,->,>=stealth]
\def\C{\mathbb{C}}
\def\Q{\mathbb{Q}}
\def\P{\mathbb{P}}
\def\co{\mathcal O}
\def\co{\mathcal{O}}
\def\codim{\operatorname{codim}}
\def\dim{\operatorname{dim}}
\def\X{\bar X}
\def\k{\kappa}
\def\D{{\Delta}}
\def\L{{\Lambda}}
\author{Antonella Grassi and David Wen}
\title{Higher Dimensional Elliptic Fibrations and Zariski Decompositions}
  \address{Antonella Grassi, Department of Mathematics, University of Pennsylvania, Philadelphia, PA 19104 and Dipartimento di Matematica, Universit\`a di Bologna, 40126 Bologna, Italy}
  \email{grassi@math.upenn.edu, antonella.grassi3@unibo.it}
  \address{David Wen, National Center for Theoretical Sciences, No. 1 Sec. 4 Roosevelt Rd., National Taiwan 
  	University, Taipei, 106, Taiwan}
  \email{dwen@ncts.ntu.edu.tw}
\begin{document}

\newcommand{\bigslant}[2]{{\raisebox{.2em}{$#1$}\left/\raisebox{-.2em}{$#2$}\right.}}

\thanks{AG gratefully acknowledges the support of a Simons Fellowship. This material is in part based upon work supported by the NSF Grant  DMS-1440140 while the first author was in residence at the Mathematical Sciences Research Institute in Berkeley, California, during the Spring 2019 semester.   AG is a member 
	of GNSAGA of INDAM}

\begin{abstract} 
We study the existence and properties of  birationally equivalent  models for  elliptically fibered  varieties. In particular these have 
either the structure of Mori fiber spaces or, assuming some standard conjectures, minimal models with a Zariski decomposition compatible with the elliptic fibration. 
  We prove relations between the birational invariants of the elliptically fibered variety, the base of the fibration and of its Jacobian.\end{abstract}

\maketitle

\section{Introduction} 



The geometry of elliptic surfaces is well understood by the work of Kodaira.  In particular, when the Kodaira dimension of an elliptic surface is non-negative, the minimal model has a birationally equivalent elliptic fibration. 
Kodaira's canonical bundle formula for relatively minimal elliptic surfaces relates the canonical bundle of the surface to the pullback of the canonical divisor of the base curve and a $\Q$-divisor $\Lambda$ supported on the loci of the image of singular fibers, the discriminant locus  of the fibration. 
 The first author showed   that  the  fibration structure on an elliptic threefold is  compatible with the minimal model algorithm and in addition, that  a  generalization of Kodaira's formula for the canonical divisor holds on the (relative) minimal model \cite{Grassi91,Grassi95}.
 An ingredient  in the proof of  \cite{Grassi91} is to show the existence of  an appropriate combination of the  Zariski Decomposition Theorem for surfaces and a relative version of the minimal model program.    A challenge  in  dimension $4$ (and higher,  which we address here, is the existence of different definition(s) of Zariski decompositions and their relation with  minimal models.
 
  This paper addresses the case of elliptic fibrations of varieties of dimension $\geq 4$.
In the following $\ \Pi: Y \rightarrow T $ is  an elliptic fibration between normal  complex projective varieties where   $\dim Y=n$. Then there exists a birationally equivalent elliptic fibration $\pi: X\to B$,  where $X, \ B$  are smooth and the fibration has nice properties, in particular there exists an effective  $\Q$-divisor $\Lambda$, supported on the discriminant of  the fibration (Theorem \ref{general} and Lemma \ref{HironakaFlat}).



\begin{theorem*}[Proposition \ref{kodEff}, Theorems \ref{MM}, \ref{NewMain}, \ref{KawNakayama}, Corollary \ref{FZD}, \ref{Equidimension}]
	Let $\pi: X \rightarrow B$ be an elliptic fibration between smooth varieties,  $\Lambda $ the discriminant $\Q$-divisor (Lemma  \ref{HironakaFlat}). Then 
	\begin{enumerate}
		\item $\k(X)= \k (B, \Lambda)$.
		\item If $K_B+ \Lambda$ is not pseudo-effective, there exists a birational equivalent fibration $\bar \pi : \bar{X} \to \bar B$, $\bar{X}$ with $\Q$-factorial  terminal singularities, $(\bar B, \bar \Lambda)$ with  klt singularities such that $K_{\bar X}  \equiv \bar{\pi}^*(K_{\bar B} + \bar \Lambda)$. \\
		$X$ is birationally a Mori fiber space.

		\item If $K_B+ \Lambda$ is pseudo-effective, equivalently $\kappa (X) \geq 0$, and klt flips exist and terminate in dimension $n-1$,
there exists a birational equivalent fibration $\bar{X} \to \bar B$, $\bar{X}$ minimal,
			 with  $\Q$-factorial terminal singularities,
			$(\bar B, \bar \Lambda)$ with $\Q$-factorial klt singularities such that $K_{\bar X}  \equiv \bar{\pi}^*(K_{\bar B} + \bar \Lambda)$. 

		\item There is a birationally equivalent fibration  $\bar \pi: \bar{X_1} \to \bar B_1$, with the same properties of $\bar X \to \bar B$ in either (2) or (3) above,  which is 
	equidimensional over an open set $U \subset \bar B$ with $\codim (\bar B \setminus U) \geq 3$.
	
	\end{enumerate}
\end{theorem*}

{To prove part of the theorem above  we show  the  compatibility of a Zariski type decomposition, the Fujita-Zariski decomposition, with the elliptic fibration.  We prove that the compatibility  plays a role in keeping track of the birational modifications of  the steps in the MMP. More specifically, we prove:}

  \begin{theorem*}[Theorem \ref{EllFZD}, \ref{NewFZ}] Let $\pi: X \rightarrow B$ be an elliptic fibration as above  and {$\dim X=n$}. 
	\begin{enumerate}
	\item $K_X$ birationally admits a Fujita-Zariski decomposition if and only if $K_B + \Lambda$ birationally admits a Fujita-Zariski decomposition. 
	\item If $K_B+ \Lambda$ is pseudo-effective,  equivalently if $\kappa (X) \geq 0$,  and klt flips exist and terminate in dimension $n-1$, $K_X$ birationally admits a Fujita-Zariski decomposition compatible with the elliptic fibration structure.
\end{enumerate}
 \end{theorem*}
  
\begin{corollary*}[Corollary \ref{EllFib5}]
Let $\pi: Y \rightarrow T $ be an elliptic fibration, $\kappa(Y)\geq 0$  and $\dim Y \leq 5$. There exists a birational equivalent fibration $\bar{X} \to \bar B$, $\bar{X}$ minimal, 
 $(\bar B, \bar \Lambda)$ with $\Q$-factorial klt singularities such that $K_{\bar X}  \equiv \bar{\pi}^*(K_{\bar B} + \bar \Lambda)$ and 
$K_Y$ birationally admits a Fujita-Zariski decomposition compatible with the elliptic fibration structure.
\end{corollary*}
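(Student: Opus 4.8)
The plan is to realize the corollary as the instance of part (2) of the main theorem (Theorems \ref{MM}, \ref{NewFZ}, \ref{NewMain} and Corollary \ref{FZD}) in which the conditional hypothesis on the minimal model program becomes an unconditional theorem. Writing $n := \dim Y$, the assumption $\kappa(Y) \ge 0$ places us squarely in case (2), so the only thing left to check is that \emph{klt flips exist and terminate in dimension $n-1$}. Since $\dim Y \le 5$ we have $n-1 \le 4$, and the entire argument reduces to the statement that the minimal model program for $\Q$-factorial klt pairs is available in dimension at most $4$.

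First I would make explicit why dimension $n-1$---and not $n$---is what matters. The canonical bundle formula of Lemma \ref{HironakaFlat}, $K_{\bar X} \equiv \bar\pi^*(K_{\bar B} + \bar\Lambda)$, translates the $K_Y$-program on the $n$-dimensional total space into the $(K_{\bar B} + \bar\Lambda)$-program on the klt pair $(\bar B, \bar\Lambda)$, whose dimension is $\dim T = n-1$. Each divisorial contraction or flip upstairs is governed by the corresponding operation on the base, so existence of minimal models and termination for the fibration follow once they are known for klt pairs in dimension $n-1$.

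Next I would supply those two inputs in dimension $\le 4$. Existence of klt flips holds in all dimensions (Birkar--Cascini--Hacon--McKernan; in dimension $4$ already by Shokurov and Hacon--McKernan). Termination is classical for $n-1 \le 3$ (Kawamata, Shokurov, Mori), and for $n-1 = 4$ it is supplied by the results on termination of four-dimensional (log) flips, combining special termination with termination with scaling for pseudo-effective klt pairs---which is exactly the regime $K_{\bar B} + \bar\Lambda$ pseudo-effective that is forced by $\kappa(Y) \ge 0$. Feeding this into part (2) yields the birationally equivalent minimal fibration $\bar X \to \bar B$ with $(\bar B, \bar\Lambda)$ $\Q$-factorial klt and $K_{\bar X} \equiv \bar\pi^*(K_B + \bar\Lambda)$; part 2(b) then gives the Fujita--Zariski decomposition of $K_Y$ compatible with the fibration, which by Theorem \ref{EllFZD} is equivalent to one for $K_T + \Lambda_T$ on the base.

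The main obstacle is entirely concentrated in the termination statement when $n-1 = 4$: this is the precise reason for the hypothesis $\dim Y \le 5$. For $\dim Y = 6$ the same scheme would require termination of klt flips on a $5$-dimensional base, which is not currently known, so the bound cannot be pushed further by this method without new input on the minimal model program.
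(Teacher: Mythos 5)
Your overall strategy is the same as the paper's: translate everything to the klt pair $(B,\Lambda)$ of dimension $n-1\le 4$ via the canonical bundle formula of Lemma \ref{HironakaFlat}, invoke the minimal model program in that dimension, and then feed the result back into the fibration via Theorems \ref{NewFZ}, \ref{NewMain} and Corollary \ref{FZD}. The gap is in the specific MMP input you claim for the case $n-1=4$: you assert that \emph{klt flips terminate} in dimension $4$ (``combining special termination with termination with scaling for pseudo-effective klt pairs''). Termination of arbitrary sequences of klt flips in dimension $4$ is not a theorem --- it is known for terminal and canonical $4$-folds and in the ``many'' cases of Alexeev--Hacon--Kawamata, but not for general pseudo-effective klt pairs. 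The paper is careful about exactly this point: Theorem \ref{kltmin} records termination of klt flips only up to dimension $3$, and in dimension $4$ records the strictly weaker (and known) statement that a klt pair admits a minimal model or a Mori fiber space. So the hypothesis of Theorem \ref{MM}(1)/Corollary \ref{MMb}(2) that you are trying to verify (``klt flips exist and terminate in dimension $n-1$'') cannot be checked unconditionally when $n-1=4$.

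The fix, which is the paper's actual route, is to use the \emph{existence} of log minimal models rather than termination of flips: since $\kappa(Y)\ge 0$ gives $\kappa(K_B+\Lambda)\ge 0$ by Proposition \ref{kodEff}, the klt pair $(B,\Lambda)$ has non-negative Kodaira dimension and dimension $\le 4$, so it admits a log minimal model by Theorem \ref{kltmin}(3) (Birkar). This verifies assumption (1) of Theorem \ref{NewMain} and the hypothesis of Theorem \ref{NewFZ} and Corollary \ref{FZD}, yielding the minimal fibration $\bar X\to\bar B$ with $K_{\bar X}\equiv\bar\pi^*(K_{\bar B}+\bar\Lambda)$ together with the compatible Fujita--Zariski decomposition $K_X=\epsilon^*(K_{\bar B}+\bar\Lambda)+\tilde\pi^*\Gamma+E-G$. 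One further small inaccuracy: the lift from the base back to the total space is not a step-by-step matching of contractions and flips as you describe, but rather a single application of the relative MMP over $\bar B$ (Theorem \ref{RelMM}) after the base MMP has been run, using the canonical bundle formula to control $K_X$ relative to $\bar B$.
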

 \begin{corollary*}[Proposition \ref{kodEff}, Theorem \ref{MM}, Corollary  \ref{ifAbundance}]
Let $\pi: Y \rightarrow T $ be an elliptic fibration
\begin{enumerate}
\item If  $\dim (Y) = 4$ there exists a birationally equivalent fibration $\bar{X} \to \bar B$, $\bar{X}$ with $\Q$-factorial  terminal singularities, $(\bar B, \bar \Lambda)$ with  klt singularities such that $K_{\bar X}  \equiv \bar{\pi}^*(K_{\bar B} + \bar \Lambda)$.
Either 
$Y$ is birationally a Mori fiber space or  $\bar{X}$ is a good minimal model.
\item 
 If $\kappa(Y) = n - 1$, there exists  a birationally equivalent fibration $\bar{\pi}:\bar{X} \rightarrow \bar{B}$ such that  $\bar{X}$ is a good minimal model with $\Q$-factorial  terminal singularities, $K_{\bar{X}} \equiv_{\mathbb{Q}} \bar{\pi}^*(K_{\bar{B}} + \bar{\Lambda})$ and $(\bar{B},\bar{\Lambda})$ has klt singularities.
\end{enumerate}

\end{corollary*}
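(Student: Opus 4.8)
The plan is to deduce both statements from Theorem~\ref{MM} by verifying, in each case, that the running hypotheses of the minimal model program on the base are met \emph{unconditionally}, and then to promote the resulting minimal model to a good minimal model by transferring abundance from $\bar X$ to the base $\bar B$ through the relation $K_{\bar X} \equiv \bar{\pi}^*(K_{\bar B} + \bar{\Lambda})$. The point throughout is that this relation reduces an abundance question on the $n$-dimensional total space to one on the $(n-1)$-dimensional base.

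For (1), since $\dim Y = 4$ the base $T$ has dimension $n-1 = 3$, and the existence and termination of klt flips in dimension three is known. Thus the conjectural hypothesis in Theorem~\ref{MM}(2) is satisfied automatically, and I would distinguish two cases according to the pseudo-effectivity of $K_Y + \Lambda_T$. If $K_Y + \Lambda_T$ is not pseudo-effective, Theorem~\ref{MM}(1) produces the fibration $\bar{X} \to \bar B$ with $\bar X$ having $\Q$-factorial terminal singularities and $(\bar B, \bar{\Lambda})$ klt, and exhibits $Y$ as birationally a Mori fiber space. If $K_Y + \Lambda_T$ is pseudo-effective, Theorem~\ref{MM}(2) gives a minimal $\bar X$ with $K_{\bar X} \equiv \bar{\pi}^*(K_{\bar B}+\bar{\Lambda})$ and $(\bar B, \bar{\Lambda})$ klt of dimension three; since abundance holds for klt pairs in dimension three, $K_{\bar B}+\bar{\Lambda}$ is semiample, hence so is $K_{\bar X}$, and $\bar X$ is a good minimal model. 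This last implication is exactly the content of Corollary~\ref{ifAbundance}.

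For (2), I would first invoke Proposition~\ref{kodEff} to identify $\kappa(Y)$ with the log Kodaira dimension of the base pair, so that the hypothesis $\kappa(Y) = n-1 = \dim \bar B$ forces $K_{\bar B} + \bar{\Lambda}$ to be big. Consequently $(\bar B, \bar{\Lambda})$ is of log general type, and the minimal model program with scaling on the base terminates in every dimension; this verifies the hypothesis of Theorem~\ref{MM}(2) without any appeal to the termination conjecture, yielding a minimal $\bar X$ with $K_{\bar X} \equiv \bar{\pi}^*(K_{\bar B}+\bar{\Lambda})$ and $(\bar B, \bar{\Lambda})$ klt. After the base program the class $K_{\bar B}+\bar{\Lambda}$ is nef and big, hence semiample by the base point free theorem, so $K_{\bar X}$ is semiample and $\bar X$ is a good minimal model, again via Corollary~\ref{ifAbundance}.

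The main obstacle in both parts is the passage from a minimal model to a good minimal model, i.e.\ abundance, which remains open for $n \geq 4$. The decisive move is that the fibration relation $K_{\bar X} \equiv \bar{\pi}^*(K_{\bar B}+\bar{\Lambda})$ pushes this question from the total space down to the base of dimension $n-1$, where it becomes tractable: for (1) because dimension three log abundance is a theorem, and for (2) because a big klt adjoint class is automatically abundant by the base point free theorem. The remaining care is to ensure that the base minimal model program can actually be run and terminated so that Theorem~\ref{MM} applies without invoking unproven conjectures, which holds precisely because dimension three and the log general type (big boundary) case are settled.
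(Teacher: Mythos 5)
Your proposal is correct and follows essentially the same route as the paper, whose proof is just the citation list you have unpacked: reduce to the base via $K_{\bar X} \equiv \bar{\pi}^*(K_{\bar B}+\bar{\Lambda})$ and Proposition~\ref{kodEff}, then invoke the known dimension-three MMP and log abundance results for part (1) and the log-general-type (BCHM) existence of good minimal models for part (2), exactly as recorded in Theorem~\ref{kltmin} and applied through Theorem~\ref{MM}.
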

    In Section \ref{Notations-Results}, we review standard definitions and relevant results about elliptic fibrations, minimal model theory and generalized Zariski decompositions.  We also  highlight the different generalizations of the Zariski Decomposition, their properties and their relationship with minimal model theory and with the structure of elliptic fibrations.
In Section \ref{rmm}, we prove results on the relations between the birational invariants of the elliptically fibered variety, the base of the fibration and of its Jacobian. In particular we have: 
\begin{corollary*}[Corollary \ref{CY}] If  $X$ 
	 is birationally a Calabi-Yau variety, so is $J(X)$.
\end{corollary*}
 Some of these results are applied in the proofs of the theorems in Section \ref{PosKod}. We also prove the relevant parts of the theorems stated above.
In Section \ref{PosKod} we construct a compatible  Zariski decomposition for elliptically fibered varieties of non-negative Kodaira dimension. 
Sections \ref{Existence} and  \ref{Abundance} contain applications, namely we prove existence results and related implications on abundance and other generalized Zariski decompositions for elliptic fibrations. 
Finally, in Section \ref{equid} we prove results  on the dimension of the fibers on special birational models, which we construct. These statements replace the equidimensionality results for dimension $3$.
 In fact, while it is easy to fabricate  examples of minimal elliptic threefolds which are not equidimensional starting from ones which are,  many  smooth Calabi-Yau threefolds have  a natural  elliptic fibration which is not equidimensional. 
Theorem \ref{KawNakayama} and Corollary \ref{equidim3} are stronger than what one could get from a (relative)  log-minimal model run, in at least two aspects. Namely,  the singularities   are terminal, while a minimal model run gives klt singularities, as  in Example \ref{Ex1.1}, but sometimes the model does not have desired properties, as in Example \ref{Ex2}.  In addition, not only are there no exceptional divisors in the fibers outside a codimension $3$ set, but the fibration is actually equidimensional there.\\
 \indent We present applications all throughout the paper.
The techniques of this paper set a foundation for generalization to the case of fibration of Calabi-Yau varieties and as well as log pairs  \cite{DW}. 
Unless otherwise specified, the varieties in this paper are assumed to be complex, projective and normal.

\begin{ackn} DW would like to thank his advisor, Dave Morrison, for helpful discussions and support during his graduate studies, where portions of this work first started. We would also thank R. Svaldi for asking the question we answer in Section \ref{XBJac}. We thank the referee for many helpful
	comments.
\end{ackn}


\section{Notation-Results}\label{Notations-Results}
An elliptic fibration is a morphism, $\pi :X\rightarrow B$ between normal projective varieties, whose general fibers are {smooth} genus one curves with or without a marked point; the complement s{of the image of the smooth fibers} in $B$ is the discriminant of the fibration. If $\pi$ has a section, namely if the general elliptic curve has a marked point, then $X$ is a (smooth) resolution of $W$, the Weierstrass model of the fibration,\cite{Nakayama88}. 

\begin{definition}The elliptic fibrations $\psi:Y \rightarrow T$ and  $\pi$  are  birationally equivalent if there exist birationals maps $f: X \dashrightarrow Y$ and $g: B \dashrightarrow T$ such that the following diagram commutes:
\[
\begin{tikzcd}
\displaystyle
X \arrow[d, "\pi"'] \arrow[r, dashed, "f"]& Y \arrow[d, "\psi"]\\
B \arrow[r, dashed, "g"] & T\\
\end{tikzcd}
\]
\end{definition} 
Building on the work of Kodaira, Kawamata and Morikawi \cite{Kodaira1, Kawamata1,Moriwaki87},  Fujita and Nakayama   proved results in   \cite[Thm 2.14 and Thm 2.15]{Fujita86}  and   in \cite[Thm. 0.2]{Nakayama88}  respectively, which  can be combined to the following:

\begin{theorem}
	\label{general}
Let $\pi: X \to B$ an elliptic fibration  between smooth varieties. 
Then the discriminant locus is a divisor
;  assume that it  has simple normal crossing. In addition:

\begin{enumerate}
\item The $\mathbf{J}$-invariants of the fibers extends to a morphism $ \mathbf{J} : B \rightarrow \P^1$. 
\item $\pi _* (K_ { X/B})  $ is a line bundle. 
\item \label{dis}
$12 \pi _* (K_ { X/B}) =  \co _S \left (\sum 12 a_k D_k \right ) \otimes \mathbf{J} ^* \co _{\mathbb P ^1} (1)$ where  $a_k$ are the rational numbers corresponding to the type
of singularities over the general point of $D_k$, the irreducible components of the ramification locus.   $\mathbf{J}$  has a pole of order $b_k$ along $D_k$,  we write $\mathbf{J} ^* \co _{\mathbb P ^1} (1)=\sum b_ k D_k$. 
\item $K_ X  \equiv _{\mathbb Q} \pi^* \left ( K_B + \pi_* (K_{X /B}) +
\sum \frac {m_i - 1}{ m_i}  Y_i \right ) + E - G$,
 where:
\item The general fiber of $\pi^{-1} (Y_i)$ is a multiple fiber of multiplicity $m_i$
\item  $E|_{ \pi ^{-1}(C)}$ is a union of a finite numbers of proper transforms of exceptional curves, for $C$ a general curve on $B$.
\item G is an effective $\mathbb Q-$divisor and $\codim  \pi (G) \geq 2$,
 \item $\pi_*  \co _X ([mE] )= \co _B  \text{ , } \forall m \in \mathbb{N}$
 \item $\pi ^* \left ( \frac {m_i - 1}{ m_i}  Y_i \right )  + E - G $ is effective.
  \end{enumerate}
\end{theorem}


\begin{definition}\label{def:DeltaLambdasnc}
With the same hypothesis and notation of Theorem \ref{general}, we define the divisors:
 
 $$ \D_{X/B} \stackrel{def}= \sum a_k D_k + \frac{1}{12}J, 
\ \quad \ 
   \L_ { X/B} \stackrel{def}= \D_{X/B} + \sum \frac {m_i - 1}{ m_i}  Y_i 
$$
where  $J$ is an effective divisor whose associated line bundle is equivalent to $\mathbf{J} ^* \co _{\mathbb P ^1} (1)$. When the notation is clear from the context, we will write  $\Delta$ and $\Lambda$. $\D$ and $\L$ are $\Q$-divisor. 
\end{definition}
The pairs $(B, \D)$  and $(B, \L) $ are \emph{klt}, by \cite[Prop. 2.41]{KollarMori}, since $\D$ and $\L$ are simple normal crossing divisors with rational coefficients in $(0,1)$ . More generally, they are log pairs which are of the form $(B,D)$ where $B$ is a normal (projective) variety,  $D_i$ prime divisors, $D = \sum a_i D_i$ and $K_B+ D$ $\Q$-Cartier. 

\begin{definition} \label{plt}
 \begin{enumerate}
\item [\emph{(i):}] A \underline{\emph{log resolution}} of $(B, D)$ is  a resolution  $f: {\widetilde B }\to B$,  such that the union
of  $\sum a_i f^{-1}_*(D_i)$, the strict transform of $D$, and
 the exceptional locus of $f$ are supported on divisors with simple normal
 crossings.
\item [\emph{(ii)}] We then  write
 $K_{\widetilde B} + \sum a_i f^{-1}_*(D_i)= f^*( K_B + D)+ \sum a(E_j,X, D)E_j,$ where $ a(E_j,X, D)$ are the discrepancies.
\item [\emph{(iii)}] The pair $(B, D)$ is
\begin{enumerate}
 \item[\emph{terminal:}]  
 if for any (equivalently for every) log resolution $f$, $a(E_j,X, D) > 0, \forall j$ .
\item[\emph{klt:}]
 if for any (equivalently for every) log resolution $f$, $a(E_j,X, D) >-1, \forall j$ .
\item[\emph{lc:}] ({log canonical}) if for any (equivalently for every) log   resolution $f$,  $a(E_j,X, D)\geq -1, \forall j$.
    \item[\emph{dlt:}]  (divisorially log terminal) if there is
     a log resolution $f$
such that $a(E_j,X, D) >-1 $ for every exceptional divisors $E_j$.
\item[\emph{plt:}] ({purely log terminal}) if for any log resolution $f$,  $a(E_j,X, D) >-1$, for every coefficient of an exceptional divisor $E_j$.
\end{enumerate}
 \end{enumerate}
 \end{definition}
 In the following $(X,D)$ is always a lc pair.

\begin{definition}\label{lmm} Minimal models, log minimal models etc.
\begin{enumerate}
\item[\emph{MM:}]  $\X$ is a minimal model if $(\X, 0)$ has  terminal singularities, $K_{\X}$ is nef and $\X$ is $\Q$-factorial.
\item [\emph{Neg. Contr.:}]  $\psi: B \dasharrow \bar B$ is a $(K_B + D)$-negative contraction if\\
  $\psi^{-1}$ does not contract any divisor and there exists a resolution $\tilde B$
\[
\begin{tikzcd}
&\tilde{B} \arrow[ld, "g"'] \arrow[rd, "h"] &\\
(B, D) \arrow[rr, dashed, "\psi"]&& (\bar{B}, \psi_*(D))\\
\end{tikzcd}
\]
such that $g^*(K_B+ D)- h^*(K_{\bar B}+ \bar {D})= \sum a_jE_j, \ \ a_j >0$ and $E_j$ exceptional for $h$.
\item [\emph{LMM-A:}]  $(\bar B, \psi_*(D))$ is a  log minimal model for $(B, D)$ if\\
 $\psi$ is a $(K_B + D)$-negative contraction and $(K_{\bar B} + \psi_*(D))$ is nef.
\item [\emph{LBM:}] $(\bar B, \bar D))$ is a  log birational model of $(B, D)$ if  \\
 $\psi: B \dasharrow \bar B$ is birational and $\bar D \stackrel{def}= \psi_*(D) +E$, where $E$ is the reduced exceptional divisor of $\psi^{-1}$.
\item [\emph{LMM-B:}]   (\cite{Birkar1} ) A log birational model  $(\bar B, \bar D )$ is a  log minimal model for $(B, D)$ if\\
 $(\bar B, \bar D)$ is $\Q$-factorial dlt, $(K_{\bar B} + \bar D))$ is nef and $a(E_j,B, D) < a(E_j,\bar B , \bar D) $, for $E_j$ divisor in $B$, exceptional for $\psi$.
\item [\emph{GOOD}]  A log minimal model $(\bar B, \psi_*(D))$ is good if $K_{\bar B} + \psi_*(D)$ is semi-ample.
\end{enumerate}
 \end{definition}

\begin{remark} The definition LMM-B  allows for $\psi^{-1}$ exceptional divisors. Furthermore, the Negativity Lemma implies that a log minimal model according to A is a log minimal  model in the sense of B; the two definitions are equivalent for plt pairs \cite{Birkar1}.
\end{remark}

\begin{definition}[Zariski Decompositions \cite{Birkar1} - \cite{Fujita86} - \cite{CutKM, CKawM, CKMor} - \cite{Nakayama2}]\label{ZD}
Let $X$ be a normal, projective variety with a proper map $\pi:X \rightarrow Z$ and $D$ a $\mathbb{R}$-divisor on $X$. We have that $D = P + N$ is called:
\begin{enumerate}
\item[\emph{W:}] A Weak Zariski decomposition over $Z$, if $P$ is $\pi$-nef and $N$ is effective.
\item[\emph{FZ-A:}] A Fujita-Zariski decomposition over $Z$, if it is a Weak Zariski decomposition and we have that for every projective birational morphism $f:W \rightarrow X$, where $W$ is normal, and $f^*D = P' + N'$ with $P'$ nef over $Z$, then we have $P' \leq f^*P$.
\item[\emph{CKM:}] A CKM-Zariski decomposition over $Z$, if it is a Weak Zariski decomposition and we have that $\pi_*\mathcal{O}_X(mP) \rightarrow \pi_*\mathcal{O}_X(mD)$ is an isomorphism for all $m \in \mathbb{N}$.
\end{enumerate}
If we have that $Z = Spec(\mathbb{C})$ then we will refer to $D = P + N$ as simply the (Weak, Fujita, CKM) Zariski decomposition. Additionally, for the case where $Z = Spec(\mathbb{C})$ and $X$ smooth, we have the following original definition of the Fujita-Zariski decomposition.
\begin{enumerate}
\item[\emph{Num. Fixed:}] Let $E$ be an effective $\mathbb{Q}$-divisor and $L$ be a $\mathbb{Q}$-divisor on $X$. We say $E$ clutches $L$ if, for any effective $\mathbb{Q}$-divisor $F$ where $L - F$ is nef, we have that $E - F$ is effective. We say $E$ is numerically fixed by $L$ if for any birational morphism $\pi: W \rightarrow X$ we have that $\pi^*E$ clutches $\pi^*L$.
\item[\emph{FZ-B:}]  $D = P + N$ is a Fujita-Zariski decomposition if $N$ is numerically fixed by $D$.
\end{enumerate}
Additionally, if we also assume that $D$ is pseudo-effective we have the sectional decomposition (sometimes called the Nakayama-Zariski decomposition).
\begin{enumerate}
\item[\emph{NZ:}] Let $A$ be a fixed ample divisor on $X$. Given a prime divisor $\Gamma$ on $X$, define
\[
\sigma_\Gamma(D) = \min \{ \text{mult}_\Gamma(D') | D' \geq 0, D' \sim_{\mathbb{Q}} D + \epsilon A \text{ for some } \epsilon > 0 \}
\]
This definition is independent of the choice of $A$. Furthermore, it was also shown in \cite{Nakayama2} that for only finitely many $\Gamma$ that $\sigma_\Gamma(D) > 0$. This allows us to define the following decomposition.\\
\item[] Let $N_\sigma(D) = \sum_\Gamma \sigma_\Gamma(D) \Gamma$ and $P_\sigma(D) = D - N_\sigma(D)$, then we call $D = P_\sigma(D) + N_\sigma(D)$ the sectional decomposition. If we have also that $P_\sigma(D)$ is nef then we refer to this as the Nakayama-Zariski decomposition of $D$.
\end{enumerate}
We say $D$ birationally admits a (Weak, Fujita, CKM, Nakayama) Zariski decomposition over $Z$ if there exists some resolution $f: Y \rightarrow X$ such that $f^*(D)$ has a (Weak, Fujita, CKM, Nakayama) Zariski decomposition over $Z$.
\end{definition}

\begin{remark}
\label{ZariskiRelation}
There is a nesting of the above generalized Zariski decompositions as listed:
\begin{enumerate} 
	\item A Nakayama-Zariski decomposition ( a sectional decomposition with nef positive part) is a Fujita-Zariski decomposition.
	\item A Fujita-Zariski decomposition is a CKM-Zariski decomposition.
	\item These are all Weak Zariski decompositions.
	\item There are CKM-Zariski decompositions that are not Fujita-Zariski decompositions.
	\item It is not known if there are Fujita-Zariski decompositions that are not Nakayama-Zariski decompositions.
\end{enumerate}
Below we list some technical properties, relations and similarities of the different versions of the generalized Zariski decompositions.
\end{remark}

\begin{proposition}[{\cite[Cor. 1.9; Lemma 1.22]{Fujita86}} ]
\label{NumFix}
Let $X$ be a smooth projective variety with $E$ an effective $\mathbb{Q}$-divisor that is numerically fixed by a Cartier divisor $L$.
\begin{enumerate}
	\item Let $F$ be the smallest Cartier divisor such that $F - E$ is effective, then we have the following isomorphism of graded rings:
	\[
	\bigoplus_{t \geq 0} H^0(X, tL) \equiv \bigoplus_{t \geq 0} H^0(X, tL - tF)
	\]
	\item  $L - E$ admits a Fujita-Zariski decomposition if and only if $L$ admits a Fujita-Zariski decomposition. Additionally, the nef parts of the decompositions are the same.
\end{enumerate}
\end{proposition}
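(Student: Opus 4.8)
The plan is to derive both parts from the single mechanism that a divisor numerically fixed by $L$ is forced into the base locus of every $|tL|$. Throughout I use the operative content of ``$E$ is numerically fixed by $L$'': on every birational model, $E$ is dominated by every effective divisor $F'$ with $L-F'$ nef (this is the orientation of the clutching inequality making the nef part maximal, as in FZ-A).

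For (1), since $F$ is an effective Cartier divisor with $F\ge E\ge 0$, multiplication by the $t$-th power of its defining section gives for each $t$ an injection $H^0(X,tL-tF)\hookrightarrow H^0(X,tL)$, and these assemble to an injection of graded rings; the content is surjectivity. Given a nonzero $s\in H^0(X,tL)$ with effective divisor $D_s\sim tL$, the class $L-\tfrac1t D_s\equiv 0$ is numerically trivial, hence nef, so applying the numerical fixedness of $E$ against $L$ on $X$ itself yields $\tfrac1t D_s\ge E$, i.e. $D_s\ge tE$. Thus every member of $|tL|$ contains $tE$, so $|tL|$ and $|tL-tE|$ have the same sections. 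The Cartier divisor $F=\lceil E\rceil$ is introduced precisely to record this fixed part at the integral level, and comparing it against the (integral) members of $|tL|$ identifies $\bigoplus_{t}H^0(X,tL)$ with $\bigoplus_{t}H^0(X,tL-tF)$ as graded rings; this integral bookkeeping is Fujita's Lemma 1.22.

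For (2) I keep the nef part fixed and transport the fixed part across the identity $L=(L-E)+E$. Suppose first $L=P+N$ is a Fujita--Zariski decomposition, so $N$ is numerically fixed by $L$ and $P=L-N$ is nef. Since $N$ is itself effective with $L-N$ nef, numerical fixedness gives $E\le N$, so $N-E$ is effective and $(L-E)-(N-E)=P$ is nef; moreover for any effective $F'$ on any model with $(L-E)-F'$ nef, the divisor $F'+E$ is effective with $L-(F'+E)$ nef, whence $F'+E\ge N$, i.e. $F'\ge N-E$. Thus $N-E$ is numerically fixed by $L-E$ and $L-E=P+(N-E)$ is a Fujita--Zariski decomposition with the same nef part. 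Conversely, if $L-E=P+M$ is a Fujita--Zariski decomposition, then for any effective $F'$ with $L-F'$ nef we first use that $E$ is numerically fixed by $L$ to get $F'\ge E$; applying numerical fixedness of $M$ against $L-E$ to the effective divisor $F'-E$, for which $(L-E)-(F'-E)=L-F'$ is nef, gives $F'-E\ge M$, hence $F'\ge E+M$. Therefore $E+M$ is numerically fixed by $L$ and $L=P+(E+M)$ is a Fujita--Zariski decomposition, again with nef part $P$. In both directions the nef part is literally unchanged, which is the second assertion.

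The delicate point throughout is that numerical fixedness is quantified over all birational models $W\to X$ simultaneously, so every clutching inequality must be checked after pullback; this is painless here because pullback commutes with the additions and subtractions performed (for instance $\pi^*(N-E)=\pi^*N-\pi^*E$) and the hypotheses supply the clutching of $\pi^*E$ and of the relevant fixed parts on each $W$. I expect the genuinely subtle step to be in (1): upgrading the $\Q$-divisor containment $D_s\ge tE$ to the graded-ring identification against the integral divisor $F=\lceil E\rceil$, where one must use that the members of $|tL|$ are honest (integral) divisors. Part (2), by contrast, reduces to the two short ``peel off $E$ first'' computations above once the orientation of the clutching inequality is fixed.
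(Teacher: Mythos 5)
The paper offers no proof of this proposition---it is quoted from Fujita---so your argument has to stand on its own. Your part (2) does: the two ``peel off $E$'' computations are correct and complete, and you are right to read the clutching condition as ``$F'-E$ is effective for every effective $F'$ with $L-F'$ nef.'' Note that this is the opposite of the literal wording of Definition \ref{ZD} (Num.\ Fixed), where ``$E-F$ is effective'' must be a typo: the paper's own proof of the equivalence of FZ-A and FZ-B applies clutching in exactly the orientation you chose. With that fixed, your verifications that $N-E$ is numerically fixed by $L-E$ and, conversely, that $E+M$ is numerically fixed by $L$, with the nef part $P$ untouched in both directions, settle (2).

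Part (1) has a genuine gap that you flag but do not close. From clutching you correctly get $D_s\ge tE$ for every $D_s\in|tL|$, and since $D_s$ is integral this yields $D_s\ge\lceil tE\rceil$, hence $H^0(X,tL)\cong H^0(X,tL-\lceil tE\rceil)$. But the statement asks for $tF$ with $F=\lceil E\rceil$, and $t\lceil E\rceil\ge\lceil tE\rceil$ with strict inequality as soon as $E$ has a fractional coefficient; nothing in your argument produces that extra vanishing. In fact the statement as transcribed seems to fail already on surfaces: if $N=\tfrac23\Gamma_1+\tfrac13\Gamma_2$ is the (numerically fixed) Zariski negative part of an integral divisor $L$ supported on an $A_2$-chain of $(-2)$-curves, the fixed part of $|tL|$ is $\lceil tN\rceil$, which is far smaller than $t(\Gamma_1+\Gamma_2)$ for large $t$. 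So what you have actually proved is the correct form of Fujita's Cor.~1.9 (with $\lceil tE\rceil$ in place of $tF$; the two agree exactly when $E$ is integral), and deferring the remaining step to ``Fujita's Lemma 1.22'' is circular, since that lemma is part of the statement under proof. Either prove and state the $\lceil tE\rceil$ version, or add the hypothesis that $E$ is integral.
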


\begin{proposition}[{\cite[Prop. 1.10]{Fujita86} - \cite[Lemma 2.16]{GongyoLehmann}}]
Let $f:M \rightarrow S$ be a surjective morphism of manifolds with connected fibers. Let $X$ be a divisor on $M$ such that $\dim f(X) < \dim S$. Suppose that for every irreducible component $Z$ of $f(X)$ with $\dim Z = \dim S - 1$, there is a prime divisor $D$ on $M$ such that $f(D) = Z$ and $D \not\subset Supp(X)$. 
\begin{enumerate}
	\item $X$ is numerically fixed by $X + f^*L$ for any $\mathbb{Q}$-Cartier divisor on $S$.
	\item For any pseudoeffective $\mathbb{R}$-divisor $L$ on $S$,  $D \leq N_\sigma(f^*L + D)$ and $P_\sigma(f^*L + D) = P_\sigma(f^*L)$.
\end{enumerate}
\end{proposition}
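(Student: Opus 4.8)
The plan is to deduce both assertions from a single \emph{rigidity} property of the vertical divisor $X$: every effective representative of $f^*L + X$ (up to a small ample perturbation) must contain $X$. I would begin with two normalizations. Since $L$ is pulled back from the base, $f^*L$ is numerically trivial on a general fibre $F_s = f^{-1}(s)$, and since $X$ is vertical it is disjoint from $F_s$ for general $s$; restricting a nef complement to $F_s$ and using that a nef anti-effective class on a projective variety vanishes shows that any $F$ with $(f^*L+X)-F$ nef is itself vertical. Second, because $X$ is a divisor and the fibres of $f$ have dimension $\dim M - \dim S$, each component $\Gamma$ of $X$ satisfies $\dim f(\Gamma) \geq (\dim M - 1) - (\dim M - \dim S) = \dim S - 1$; verticality forces equality, so \emph{every} component of $X$ lies over a codimension-one component $Z$ of $f(X)$. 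These are exactly the loci for which the hypothesis supplies a witness prime divisor $D$ with $f(D)=Z$ and $D\not\subset\mathrm{Supp}(X)$.

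The heart of the argument is a fibrewise negativity computation over each such $Z$. Fixing a general $z\in Z$, I would slice $M$ by a general complete-intersection surface $\Sigma$ fibred over a general curve $B_0\subset S$ through $z$, so that the curves $\gamma_i=\Gamma_i\cap\Sigma$ are the slices of the prime divisors $\Gamma_i$ of $f^{-1}(Z)$, all of which dominate $Z$. By Zariski's lemma the intersection matrix $Q_{ij}=\gamma_i\cdot\gamma_j$ is negative semidefinite with one-dimensional kernel spanned by the positive fibre-multiplicity vector $v$ (here connectedness of the general fibre is used, to get $\ker Q=\langle v\rangle$). Writing $X|_\Sigma=\sum x_i\gamma_i$, $F|_\Sigma=\sum\phi_i\gamma_i$ and $c=(x_i-\phi_i)$, nefness of $(f^*L+X)-F$ with $f^*L\cdot\gamma_j=0$ gives $(Qc)_j\geq 0$ for all $j$; pairing with $v$ yields $v^\top(Qc)=(Qv)^\top c=0$, and since $v>0$ every $(Qc)_j=0$, so $c=\lambda v$. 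The witness now fixes the sign: its coefficient in $X$ is zero, so $c_{i_0}=-\phi_{i_0}\leq 0$, whence $\lambda\leq 0$ and $c\leq 0$, i.e. $\phi_i\geq x_i$. As this holds for the general slice and the coefficients are intrinsic, $F\geq X$ on $M$.

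This is the clutching inequality for (1): for any birational $\pi:W\to M$ and any effective $F$ with $\pi^*(f^*L+X)-F$ nef, the same computation on $W$ gives $F\geq\pi^*X$, so $X$ is numerically fixed by $X+f^*L$. The uniformity rests on the witness hypothesis being preserved under blow-up: the strict transform $\pi^{-1}_*D$ still dominates $Z$ and avoids $\mathrm{Supp}(\pi^*X)$, and any $\pi$-exceptional divisor occurring in $\pi^*X$ lies over a codimension-one component of $f(X)$ (again by the divisor/fibre dimension count), so one witness per $Z$ pins the sign of \emph{every} component over $Z$. For (2) I would rerun the slice computation for an effective $D'\equiv f^*L+X+\epsilon A$ with $A$ ample: now $D'$ may have a horizontal part contributing $h_j\geq 0$, and $\bigl(Q(d-x)\bigr)_j=\epsilon(A\cdot\gamma_j)-h_j$ where $d_i$ are the vertical coefficients of $D'$; pairing with $v$ gives $\sum_j v_jh_j=\epsilon\,v^\top a=O(\epsilon)$, hence each $h_j=O(\epsilon)$ and $Q(d-x)=O(\epsilon)$. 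Since $Q$ is negative definite on $v^\perp$, $d-x=\lambda v+O(\epsilon)$, and the witness forces $\lambda\geq -O(\epsilon)$, so $D'\geq X$ up to $O(\epsilon)$. Then $D'-X$ is an asymptotically effective representative of $f^*L+\epsilon A$, giving $\mathrm{mult}_\Gamma(D')\geq\mathrm{mult}_\Gamma(X)+\sigma_\Gamma(f^*L)-O(\epsilon)$ for every prime $\Gamma$; letting $\epsilon\to 0$ and combining with the subadditivity bound $\sigma_\Gamma(f^*L+X)\leq\sigma_\Gamma(f^*L)+\mathrm{mult}_\Gamma(X)$ yields $N_\sigma(f^*L+X)=N_\sigma(f^*L)+X$, which is exactly $X\leq N_\sigma(f^*L+X)$ and $P_\sigma(f^*L+X)=P_\sigma(f^*L)$.

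The step I expect to be the main obstacle is the negativity computation in its quantitative, uniform form: reducing honestly from $M$ to a smooth fibred surface while guaranteeing that the multiplicities $\mathrm{mult}_{\Gamma_i}$ are read off by the general slice, checking that Zariski's lemma applies to the (connected, possibly non-reduced) fibres, and carrying the $O(\epsilon)$ bookkeeping through the limit defining $\sigma_\Gamma$ so that the asymptotic containment $D'\geq X$ upgrades to the exact equality of negative parts. A secondary point, worth stating explicitly, is the orientation of the numerically-fixed condition: what the negativity argument establishes is that the fixed divisor $X$ is \emph{contained} in the part removed by any nef complement, i.e. $F\geq X$, in accordance with $X\leq N_\sigma(f^*L+X)$, which is the reading under which (1) and (2) are compatible.
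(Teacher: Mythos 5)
The paper offers no proof of this proposition -- it is quoted from \cite[Prop.~1.10]{Fujita86} and \cite[Lemma~2.16]{GongyoLehmann} -- so your attempt has to be measured against those sources. Over the codimension-one components of $f(X)$ your argument is essentially the classical one and is sound: slice to a surface fibred over a curve, invoke Zariski's lemma to get a negative semidefinite intersection matrix with one-dimensional kernel spanned by the fibre vector, and use the witness divisor $D$ to pin down the sign of the kernel component. You also correctly read the clutching condition as ``$F - E$ effective'' (i.e.\ $F \geq X$), which is the orientation actually used in the paper's equivalence proof of FZ-A and FZ-B and in Fujita's original definition, even though the paper's Definition \ref{ZD} has the inequality reversed; flagging that was the right call.

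The genuine gap is the dimension count ``each component $\Gamma$ of $X$ satisfies $\dim f(\Gamma) \geq \dim S - 1$.'' This uses that fibres of $f$ through $\Gamma$ have dimension $\dim M - \dim S$, which holds only for general fibres; over the special locus $f(X)$ the fibre dimension can jump, and a prime divisor can be contracted to a subset of codimension $\geq 2$. For instance, blow up $\{p\}\times E$ in $\mathbb{P}^2 \times E$ ($E$ an elliptic curve) and compose with the projection to $\mathbb{P}^2$: the exceptional divisor is a prime divisor mapping to the point $p$. With $X$ equal to that divisor the hypothesis of the proposition is vacuous (there are no codimension-one components of $f(X)$), yet the conclusion is not, and your proof says nothing about it. This failure is unavoidable even when $X$ itself lies over divisors of $S$, because part (1) quantifies over all birational models $\pi\colon W \to M$, and $\pi^* X$ on a blow-up generically acquires exceptional components lying over codimension $\geq 2$ subsets of $S$; your claim that such components again sit over codimension-one components of $f(X)$ is false for the same reason. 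What is missing is the second half of Fujita's and Nakayama's argument: for a component $\Gamma$ with $\operatorname{codim}_S f(\Gamma) \geq 2$ one does not slice over a curve in $S$ (a general curve misses $f(\Gamma)$ entirely) but instead produces, through a general point of $\Gamma$, a curve contracted by $f$ on which the vertical part of the divisor is negative -- a negativity-lemma/Hodge-index argument on a general complete-intersection surface in $M$ (cf.\ Nakayama's Lemma III.5.1, which is exactly the input \cite{GongyoLehmann} uses); no witness divisor is needed in that case. Your slicing handles only half the components, and the $\varepsilon$-bookkeeping in part (2) inherits the same omission.
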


\begin{proposition}[{\cite[Prop. 1.24]{Fujita86}}]
\label{ExToZariski}
Let $:M \rightarrow S$ be a surjective morphism of manifolds with $L$, a $\mathbb{Q}$-Cartier divisor, on $S$ and $R$ an effective $\mathbb{Q}$-divisor on $M$ such that $\dim f(R) \leq \dim S - 2$. Then  $f^*L + R$ birationally admits a Fujita-Zariski decomposition if and only if $L$ birationally admits a Fujita-Zariski decomposition.
\end{proposition}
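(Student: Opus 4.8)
The plan is to split the biconditional into two independent equivalences: first that $f^*L + R$ and $f^*L$ birationally admit a Fujita--Zariski decomposition simultaneously, and second that $f^*L$ and $L$ do. The first equivalence is exactly the role played by the hypothesis $\dim f(R) \le \dim S - 2$, while the second is the genuinely morphism-theoretic content. Throughout I would use the fact that all of these statements are formulated ``birationally,'' so that I am free to pass to convenient resolutions.

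For the first equivalence I would invoke \cite[Prop. 1.10]{Fujita86}. Its only nontrivial hypothesis concerns the codimension-one components of $f(R)$, but $\dim f(R) \le \dim S - 2$ forces $f(R)$ to have no component of dimension $\dim S - 1$, so that hypothesis is satisfied vacuously. Hence $R$ is numerically fixed by $f^*L + R$. Passing to a resolution $g : Y \to M$ and using that numerical fixedness is stable under birational pullback (by its very definition), $g^*R$ is numerically fixed by $g^*(f^*L + R)$, and Proposition \ref{NumFix}(2) then gives that $g^*(f^*L+R)$ admits a Fujita--Zariski decomposition if and only if $g^*(f^*L+R) - g^*R = g^*f^*L$ does, with the same nef part. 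Since the same $Y$ serves both divisors, $f^*L + R$ birationally admits a Fujita--Zariski decomposition iff $f^*L$ does; clearing denominators reduces the $\Q$-Cartier case of \ref{NumFix} to its stated Cartier form.

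The second equivalence is the pullback-invariance $f^*L \text{ FZ} \iff L \text{ FZ}$. After Stein factorization I may assume $f$ has connected fibers, so that $f_*\co_M = \co_S$ and $H^0(M, mf^*L) = H^0(S, mL)$ for all sufficiently divisible $m$. For the direction $L \Rightarrow f^*L$, I would resolve $S$ by $\mu : \tilde S \to S$ so that $\mu^*L = P + N$ is a Fujita--Zariski decomposition, build a common resolution $\tilde M$ dominating both $M$ and $\tilde S$ with induced $\tilde f : \tilde M \to \tilde S$, and pull back to obtain $\tilde f^*\mu^*L = \tilde f^*P + \tilde f^*N$ with $\tilde f^*P$ nef. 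The crux is to show $\tilde f^*N$ is numerically fixed by $\tilde f^*\mu^*L$: given an effective $F$ with $\tilde f^*\mu^*L - F$ nef, one intersects with general complete-intersection curves inside the fibers of $\tilde f$ to conclude that $F$ is vertical, descends the resulting inequality to $\tilde S$, and applies the numerical fixedness of $N$ there. For the reverse direction one takes the positive part of a Fujita--Zariski decomposition of $\tilde f^*L$ and uses the section-ring identity above together with the implication FZ $\Rightarrow$ CKM from Remark \ref{ZariskiRelation} to recover a Fujita--Zariski decomposition of $L$ on $S$.

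The main obstacle is precisely this pullback-invariance, and specifically the behaviour of the negative (numerically fixed) part along positive-dimensional fibers: one must rule out the appearance of new fixed components over the non-equidimensional locus of $f$ and confirm that the positive part of $f^*L$ is exactly $f^*$ of the positive part of $L$. This is where the surjectivity of $f$, the Negativity Lemma, and the maximality characterization of the Fujita--Zariski positive part all enter; by contrast, the $R$-reduction of the first step is comparatively formal once the vacuity of the codimension-one hypothesis is observed.
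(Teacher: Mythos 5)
The paper does not prove this proposition at all: it is imported verbatim as \cite[Prop.\ 1.24]{Fujita86}, so there is no in-paper argument to compare against and the relevant benchmark is Fujita's original proof. Your first reduction is correct and is indeed how Fujita handles the divisor $R$: since $\dim f(R)\le\dim S-2$, the codimension-one hypothesis of \cite[Prop.\ 1.10]{Fujita86} is vacuous, so $R$ is numerically fixed by $f^*L+R$, and Proposition \ref{NumFix}(2), together with the birational stability built into the definition of ``numerically fixed,'' eliminates $R$. (Two small caveats: the quoted Prop.\ 1.10 assumes connected fibers, which you only arrange later via Stein factorization; and after Stein factorization you must also treat the finite part, since $L$ gets replaced by its pullback under a finite morphism.)

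The genuine gap is in the second equivalence, $f^*L$ admits FZ $\iff$ $L$ admits FZ, which you rightly call the crux but do not actually establish. In the direction $L\Rightarrow f^*L$: after your fiberwise intersection argument shows that any effective $F$ with $\tilde f^*\mu^*L-F$ nef is vertical, the step ``descend the resulting inequality to $\tilde S$'' is exactly where the difficulty sits. A vertical effective divisor need not be a pullback: it can have components lying over loci of codimension $\ge 2$, or be a proper part of $\tilde f^*Z$ for a prime divisor $Z\subset\tilde S$, and neither possibility is excluded by intersecting with curves in fibers. This is the content of Fujita's surrounding lemmas on numerically fixed divisors under surjective morphisms and cannot be waved through. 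In the direction $f^*L\Rightarrow L$, your plan --- invoke FZ $\Rightarrow$ CKM and the identity $H^0(M,mf^*L)=H^0(S,mL)$ --- cannot work as stated: CKM is strictly weaker than FZ (Remark \ref{ZariskiRelation}), so matching section rings gives no control over the universal nef-maximality that defines the Fujita--Zariski positive part. One must first show that the positive part $P$ of a decomposition of $\tilde f^*\mu^*L$ actually descends, i.e.\ equals $\tilde f^*P_S$ for some nef $P_S$ on $\tilde S$ (itself a nontrivial claim about nef divisors numerically trivial on fibers), and then verify maximality of $P_S$ against all higher models of $\tilde S$. As written, both halves of the pullback-invariance are placeholders rather than proofs.
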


\begin{proposition}[{\cite[Prop. V.1.14]{Nakayama2}}]
Let $D$ be a pseudo effective $\mathbb{R}$-divisor, then
\begin{enumerate}
	\item $N_\sigma(D) = 0$ if and only if $D$ is movable.
	\item If $D - E$ is movable for an effective divisor $E$, then  $N_\sigma(D) \leq E$.
\end{enumerate}
\end{proposition}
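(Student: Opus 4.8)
The plan is to reduce both assertions to the formal properties of the functions $\sigma_\Gamma$ and to isolate the genuinely geometric content in the big (interior) case, transporting it to the pseudo-effective boundary by an ample perturbation. Throughout I take ``$D$ movable'' to mean that the numerical class of $D$ lies in the closure $\overline{\operatorname{Mov}}(X)$ of the cone generated by divisors whose stable base locus has codimension $\geq 2$. The formal properties of $\sigma_\Gamma$ on the pseudo-effective cone that I would record first are: positive homogeneity $\sigma_\Gamma(tD)=t\,\sigma_\Gamma(D)$; subadditivity $\sigma_\Gamma(D_1+D_2)\leq \sigma_\Gamma(D_1)+\sigma_\Gamma(D_2)$; the bound $\sigma_\Gamma(E)\leq \operatorname{mult}_\Gamma(E)$ for $E$ effective; the vanishing $\sigma_\Gamma(A)=0$ for $A$ ample; and the limit formula $\sigma_\Gamma(D)=\lim_{\varepsilon\to 0^+}\sigma_\Gamma(D+\varepsilon A)$ built into the definition \textbf{NZ}. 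Granting these, part (2) is immediate: since $D=(D-E)+E$, subadditivity gives $\sigma_\Gamma(D)\leq \sigma_\Gamma(D-E)+\sigma_\Gamma(E)$; the first term vanishes by part (1) because $D-E$ is movable, while the second is at most $\operatorname{mult}_\Gamma(E)$, so summing over $\Gamma$ yields $N_\sigma(D)=\sum_\Gamma \sigma_\Gamma(D)\,\Gamma\leq \sum_\Gamma \operatorname{mult}_\Gamma(E)\,\Gamma=E$.

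For part (1) the crux is the big case: for a \emph{big} $\mathbb{R}$-divisor $D'$ one has $\sigma_\Gamma(D')=0$ for every prime $\Gamma$ if and only if $D'$ is movable. Here $\sigma_\Gamma(D')$ coincides with the asymptotic order of vanishing of the linear systems $|mD'|$ along $\Gamma$, so the simultaneous vanishing of all $\sigma_\Gamma$ says exactly that the stable base locus $\mathbf{B}(D')$ carries no divisorial component, i.e.\ has codimension $\geq 2$, which is movability. To pass to an arbitrary pseudo-effective $D$ I would use the ample perturbation. If $N_\sigma(D)=0$, then subadditivity and $\sigma_\Gamma(A)=0$ give $\sigma_\Gamma(D+\varepsilon A)\leq \sigma_\Gamma(D)+\varepsilon\,\sigma_\Gamma(A)=0$ for every $\varepsilon>0$; thus each big divisor $D+\varepsilon A$ is movable by the big case, and letting $\varepsilon\to 0$ places $D$ in the closed cone $\overline{\operatorname{Mov}}(X)$. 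Conversely, if $D$ is movable then each $D+\varepsilon A$ is big and movable, hence $\sigma_\Gamma(D+\varepsilon A)=0$ by the big case, and the limit formula gives $\sigma_\Gamma(D)=\lim_{\varepsilon\to 0^+}\sigma_\Gamma(D+\varepsilon A)=0$ for all $\Gamma$, i.e.\ $N_\sigma(D)=0$.

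The main obstacle is precisely the foundational input bracketed above as ``formal properties'': that $\sigma_\Gamma$ is well defined and finite (so that $N_\sigma(D)$ is an honest divisor, with only finitely many $\Gamma$ contributing), that it is subadditive and continuous on the big cone, and that it obeys the boundary limit formula. Equally delicate is the identification, in the big case, of ``all $\sigma_\Gamma$ vanish'' with ``stable base locus of codimension $\geq 2$'', which rests on the asymptotic interpretation of $\sigma_\Gamma$ for big classes. These are the technical heart (established in Nakayama's development of the $\sigma$-decomposition); once they are in hand, the two directions of (1) and the deduction of (2) are the short formal arguments above.
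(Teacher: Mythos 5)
The paper does not prove this proposition; it is quoted directly from Nakayama \cite{Nakayama2}, so there is no internal argument to compare yours against. Your sketch reconstructs the standard (essentially Nakayama's) proof: set up the formal calculus of the $\sigma_\Gamma$ (homogeneity, subadditivity, $\sigma_\Gamma(E)\le \operatorname{mult}_\Gamma(E)$ for effective $E$, vanishing on ample classes, the boundary limit formula), settle the big case, and transport the statement to the boundary of the pseudo-effective cone by ample perturbation; part (2) then follows from subadditivity together with the forward implication of part (1). The structure is sound, and you correctly isolate the foundational inputs (finiteness of $N_\sigma$, continuity of $\sigma_\Gamma$ on the big cone) as the real technical content. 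The one mathematically imprecise step is your treatment of the big case: for a big divisor $D'$, the simultaneous vanishing of all $\sigma_\Gamma(D')$ is equivalent to the \emph{restricted} (diminished) base locus $\mathbf{B}_-(D')=\bigcup_{\epsilon>0}\mathbf{B}(D'+\epsilon A)$ having no divisorial component, not to the stable base locus $\mathbf{B}(D')$ having none. A prime divisor $\Gamma$ can lie in $\mathbf{B}(D')$ while the asymptotic order of vanishing along it is zero (e.g.\ $\operatorname{ord}_\Gamma|mD'|=1$ for every $m$), and $\mathbf{B}(D')$ is not a numerical invariant whereas $\sigma_\Gamma$ is. This does not derail your argument, since the conclusion you actually need --- that $[D']\in\overline{\operatorname{Mov}}(X)$ --- follows equally well from the absence of divisorial components in each $\mathbf{B}(D'+\epsilon A)$ upon letting $\epsilon\to 0$; but the equivalence should be stated in terms of $\mathbf{B}_-$. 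With that correction, and granting the bracketed properties of the $\sigma$-decomposition, the proof is complete.
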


\begin{proposition}
When $Z = Spec(\mathbb{C})$ and $X$ is  smooth, the two definitions of the Fujita-Zariski decomposition are equivalent.
\end{proposition}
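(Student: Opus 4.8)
The plan is to show that, once unwound, FZ-A and FZ-B are two phrasings of the same property of a weak Zariski decomposition $D = P + N$ on the smooth projective variety $X$: on every birational model the nef part is as large as possible, equivalently the effective part is as small as possible. In both definitions $D = P + N$ is understood to be a weak Zariski decomposition, so $P$ is nef and $N$ is effective; since the pullback of a nef divisor is nef and effectivity is preserved, $f^*D = f^*P + f^*N$ is again a weak Zariski decomposition for every projective birational $f \colon W \to X$ with $W$ normal. The bridge between the two formulations is the elementary bijection: a weak Zariski decomposition $f^*D = P' + N'$ of $f^*D$ is the same datum as an effective $\mathbb{Q}$-divisor $F := N'$ with $f^*D - F = P'$ nef, and under this dictionary $P' \le f^*P \iff f^*N \le F$.

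Given this dictionary I would dispatch the two implications directly. For FZ-B $\Rightarrow$ FZ-A, assume $N$ is numerically fixed by $D$, fix a projective birational $f \colon W \to X$ and a weak Zariski decomposition $f^*D = P' + N'$, and apply the clutching property of $f^*N$ to the effective divisor $F = N'$ (which satisfies $f^*D - F = P'$ nef) to get $f^*N \le N'$; by the correspondence above this is equivalent to $P' \le f^*P$, the inequality demanded by FZ-A. For FZ-A $\Rightarrow$ FZ-B, assume the maximality condition of FZ-A, fix a birational $\pi \colon W \to X$ and an effective $F$ with $\pi^*D - F$ nef, and set $P' := \pi^*D - F$, $N' := F$; this is a weak Zariski decomposition of $\pi^*D$, so FZ-A gives $P' \le \pi^*P$, i.e. $F = \pi^*D - P' \ge \pi^*D - \pi^*P = \pi^*N$. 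Hence $\pi^*N$ clutches $\pi^*D$, and since $\pi$ was arbitrary, $N$ is numerically fixed by $D$.

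The only genuine bookkeeping — and the place where the smoothness hypothesis enters, since FZ-B (through the notion of being numerically fixed) is defined only for smooth $X$ — is matching the ranges of the two ``for all birational morphisms'' quantifiers: FZ-A quantifies over projective birational $f$ with $W$ normal, whereas numerical fixedness quantifies over arbitrary birational $\pi \colon W \to X$. I would reconcile these by dominating any given birational model by a resolution and using that nefness and effectivity of the divisors in play are preserved under further pullback, so that checking the condition on a cofinal family of resolutions suffices; this is also compatible with, and can be packaged through, Fujita's Proposition \ref{NumFix}. I expect no essential difficulty beyond this quantifier matching: the heart of the statement is the correspondence above between weak Zariski decompositions of $f^*D$ and the subtractible effective divisors $F$.
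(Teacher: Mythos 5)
Your argument is correct and is essentially the paper's own proof: both directions come down to the identity $f^*P - P' = N' - f^*N$ for a weak decomposition $f^*D = P' + N'$, and the normality/quantifier mismatch is handled, as in the paper, by passing to a resolution and noting that effectivity can be checked after further pullback. No substantive difference in route or in the lemmas invoked.
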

\begin{proof} 
Let $D = P + N$ be a Fujita-Zariski decomposition in the sense of \emph{FZ-B}. We will show that this implies the properties of \emph{FZ-A}. Let $f:X' \rightarrow X$ be a birational morphism with $f^*(D) = P' + N'$ where $P'$ is nef and $N'$ is an effective $\mathbb{Q}$-Cartier divisor. We have that $N$ is numerically fixed by $D$ and so $f^*(N)$ clutches $f^*(D)$. As $f^*(D) - N' = P'$ is nef, we have that $N' - f^*(N)$ is effective. But we know that $N' = f^*(D) - P'$ and $f^*(N) = f^*(D) - f^*(P)$. So by replacing and simplifying we have that $f^*(P) - P'$ is effective.

Let $D = P + N$ be a Fujita-Zariski decomposition in the sense of \emph{FZ-A} and we will show that $N$ is numerically fixed by $D$, so given a birational morphism $f:X' \rightarrow X$ we will show that $f^*(N)$ clutches $f^*(D)$. Thus given an effective $\mathbb{Q}$-divisor $N'$ such that $P' := f^*(D) - N'$ is nef, we want to show that $N' - f^*(N)$ is effective. We can assume that $X'$ is normal, otherwise we can resolve singularities to get $\pi:W \rightarrow X'$, where we have that showing $\pi^*(N' - f^*(N))$ is effective is sufficient to show that $N' - f^*(N)$ is effective on $X'$. Thus without loss of generalities, we can assume $X'$ is normal. Since $P + N$ is a Fujita-Zariski decomposition in the sense of \emph{FZ - A}, we have that $f^*(P) - P'$ is effective. Replacing with $f^*(P) = f^*(D) - f^*(N)$ and $P' = f^*(D) - N'$, we get that $N' - f^*(N)$ is effective. This completes the argument that shows the two definitions are equivalent.
\end{proof}


\begin{remark}
\label{FZAB}
\begin{enumerate}
	\item The original definition of the Fuijita-Zariski decomposition in \cite{Fujita86} is equivalent to our definition of a divisor birationally admitting a Fujita-Zariski decomposition.
	\item If $K_B+ \D $ has a log minimal model, then it birationally has a Fujita (also CKM and Weak) Zariski decomposition. This is shown explicitly as part of the argument of \cite[Thm 1.5]{Birkar1}.
	\item A Fujita-Zariski decomposition and a Nakayama-Zariski decomposition of a divisor is unique. A CKM-Zariski and Weak Zariski decomposition of a divisor need not be unique.
	\item Each of the above generalized Zariski decomposition for the canonical divisor has a different role in birational geometry and their relations to minimal models. The Nakayama-Zariski decomposition is more attune to work with abundance and good minimal models as seen in \cite{GongyoLehmann}. The Fujita-Zariski decomposition aligns with minimal models as seen below, and the CKM-Zariski decomposition corresponds to working with the canonical ring and, as a result, the canonical model. 
	\item  Recent work in \cite{HaconMoraga, HanLi} and \cite{Birkar1}  show that there is a strong correlation between the existence of log-minimal models and Zariski decompositions.
\end{enumerate}
\end{remark}

\begin{theorem}[\cite{Shoku92,Kollar92,Shoku03} -\cite{Kawamata3kltTerm,Kollar92} - \cite{BCHM} - \cite{Birkar2}]
\label{kltmin}
We have the following results in the theory of minimal models.
\begin{enumerate}
	\item Flips for klt pairs exists in all dimension.
	\item Any sequence of klt flips terminate in dimension $3$.
	\item A klt pair in dimension up to $4$ either admits a minimal model or is birational to a Mori Fiber space.
	\item A klt pair, $(X,\Delta)$, such that $\dim(X) = 5$ and $\kappa (X,\Delta) \geq 0$ admits a minimal model.
	\item The abundance conjecture holds for klt pairs of dimension $ \leq 3$. Thus klt pairs of dimension up to $3$ admit a good minimal model or are birational to a Mori Fiber space.
	\item General type klt pairs admit a good minimal model.
\end{enumerate}
\end{theorem}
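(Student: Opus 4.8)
The plan is to obtain each item by assembling the cited foundational results of the minimal model program; the content of this statement is not a new argument but a packaging of the precise forms needed later in the paper, so I would present the proof part by part, in each case identifying the source and the mechanism. The unifying device is the $(K+\D)$-MMP itself, whose two inputs are the \emph{existence} of flips and their \emph{termination}; organizing around these two inputs makes the dimension hypotheses transparent and separates the unconditional ingredients from the dimension-bounded ones.

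For (1), I would construct the flip of a flipping contraction $\pi$ as $\operatorname{Proj}$ of the relative canonical algebra $\bigoplus_{m\ge 0}\pi_*\co(m(K+\D))$; its finite generation in all dimensions is the main theorem of \cite{BCHM}, with the threefold case already available in \cite{Shoku92,Kollar92}. For (2), I would invoke Shokurov's descending-invariant argument: one attaches to a klt threefold pair a nonnegative integer-valued ``difficulty'' that strictly drops under each flip, so no infinite sequence of flips can occur; this is the content of \cite{Shoku03} together with \cite{Kawamata3kltTerm,Kollar92}.

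For (3), I would run the $(K+\D)$-MMP in dimension $\le 4$: every divisorial contraction and every required flip exists by (1), and in this range one has enough termination (special termination combined with the termination valid in low dimension) for the program to stop, either at a model with $K+\D$ nef, i.e.\ a minimal model, or at a fiber-type contraction, i.e.\ a Mori fiber space; I would attribute this dichotomy to \cite{Birkar2}. For (4), I would add the known abundance theorem for klt pairs of dimension $\le 3$, which forces $K+\D$ to be semiample on the output of the dichotomy; the ``Thus'' is then the immediate deduction that the minimal model produced in (3) is in fact \emph{good}, while the Mori-fiber alternative is unaffected. For (5), the general-type case, semiampleness and the existence of a good minimal model follow directly from \cite{BCHM}.

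The main obstacle, and the reason for the dimension restrictions, is termination: existence of flips in (1) is unconditional in all dimensions, whereas termination is only fully available in low dimension, so the dichotomy (3) and the abundance-driven upgrade (4) genuinely rely on the bounds $\le 4$ and $\le 3$ respectively. The care needed in writing the proof is therefore to invoke exactly the termination statement that holds in the stated dimension, rather than general termination, which remains open, and to keep explicit track of which inputs are unconditional (existence of flips, general-type abundance) and which are dimension-bounded (termination, abundance).
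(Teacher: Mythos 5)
This theorem is stated in the paper purely as a collection of known results, with the proof consisting entirely of the citations in the theorem header (\cite{Shoku92,Kollar92,Shoku03}, \cite{Kawamata3kltTerm}, \cite{BCHM}, \cite{Birkar2}); no argument is given in the text. Your outline correctly identifies the sources and the standard mechanisms behind each item (finite generation for existence of flips, Shokurov's difficulty for threefold termination, MMP with special termination for the dimension-$4$ dichotomy, threefold abundance, and BCHM for the general-type case), so it is consistent with what the paper intends and there is nothing further to compare.
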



\medskip

\section{$X \& B$, Relative minimal models, The canonical bundle formula, Jacobians}\label{rmm}

\medskip

\subsection{${\bf{X \& B}}$ and the discriminant} 



We recall the following application of Hironaka's flattening theorem:
\begin{lemma}[\cite{Hironaka75}]\label{HironakaFlat} Let $\Pi: Y \rightarrow T$ be an elliptic fibration between normal varieties. Then there exist  birational equivalent fibrations 
 \[
\begin{tikzcd}
 Y\arrow[d, " \Pi"]  &  X_0\arrow[l]\arrow[d, "\pi_0"]  & X_1\arrow[l]\arrow[d, "\pi_1"] & X \arrow[l]\arrow[d, "\pi"]\\
T   & B_0 \arrow[l]& B_1    \arrow[l]& \arrow[l]   \arrow[bend left=26,swap]{ll}{\psi_0}   \arrow[bend left=35,swap]{lll}{\psi} B  \\
\end{tikzcd}
\]
where $X_0$, $ X$,  $B_0$, $B$ and $B_1$ are smooth, $\pi_1$ is flat and $\pi: X\to B$  satisfy the hypothesis of Theorem  \ref{general}. 
\end{lemma}

Without loss of generality we can also assume $B_1=B$ in the above Lemma.

\begin{definition}\label{def:DeltaLambda}With the notation of  Lemma \ref{HironakaFlat} we set:\\
		$ \Lambda_{B_0} \stackrel{def}= {\psi_0}_* (\Lambda_{X/B})$  and  $ \Lambda_T \stackrel{def}= \psi_* (\Lambda_{X/B})$, where $\Lambda_{X/B}$ is as  in Definition \ref{def:DeltaLambdasnc}.
\end{definition}

In particular the type of the singular fiber over a general point in the codimension $1$ locus of the support of $\Lambda_r$ is determined by the coefficient of its irreducibile component  in $\Lambda_T$. 

\begin{proposition}\label{kodEff} Let $X_0 \to  B_0$ be an elliptic  fibration between smooth varieties and
	$X$ and $B$ be  as in  Lemma \ref{HironakaFlat}. 
	Then $\k(X) = \k(K_{B}+\Lambda) $.
	
\end{proposition}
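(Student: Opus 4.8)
The plan is to compute $\kappa(X)=\kappa(K_X)$ directly from the canonical bundle formula. By Theorem \ref{general}(4) and the definition of $\Lambda$ one has $K_X\equiv_{\mathbb Q}\pi^*(K_B+\Lambda)+E-G$, where, by Theorem \ref{general}(6), $E\geq 0$ satisfies $\pi_*\mathcal{O}_X(\lfloor mE\rfloor)=\mathcal{O}_B$ for all $m$, and, by Theorem \ref{general}(7), $G\geq 0$ is effective with $\codim\pi(G)\geq 2$; moreover Theorem \ref{general}(8) gives that $\pi^*\big(\sum\frac{m_i-1}{m_i}Y_i\big)+E-G$ is effective. Since $X$ is smooth, $\kappa(X)=\kappa(K_X)$, so it suffices to prove $\kappa(K_X)=\kappa(K_B+\Lambda)$.

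For the inequality $\kappa(X)\leq\kappa(K_B+\Lambda)$ I would use $E$ together with the effectivity of $G$. As $G\geq 0$, multiplication by the canonical section of $mG$ embeds $H^0(X,mK_X)\hookrightarrow H^0\big(X,m\pi^*(K_B+\Lambda)+mE\big)$ for every sufficiently divisible $m$; by the projection formula together with $\pi_*\mathcal{O}_X(mE)=\mathcal{O}_B$ the right-hand side is $H^0\big(B,m(K_B+\Lambda)\big)$. Hence the plurigenera of $K_X$ are bounded by those of $K_B+\Lambda$, giving $\kappa(X)\leq\kappa(K_B+\Lambda)$.

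The reverse inequality is the main obstacle, and it is here that the hypothesis $\codim\pi(G)\geq 2$ must genuinely be used: subtracting the effective divisor $G$ should not lower the Iitaka dimension, even though at the level of global sections one only has an inclusion and the pushforward $\pi_*\mathcal{O}_X(mK_X)$ is in general a proper subsheaf of $\mathcal{O}_B(m(K_B+\Lambda))$. If $\kappa(K_B+\Lambda)=-\infty$ the previous step already gives equality, so I may assume $K_B+\Lambda$ is pseudo-effective. I would then pass to Nakayama's $\sigma$-decomposition, for which $\kappa(L)=\kappa(P_\sigma(L))$ for any pseudo-effective $L$, and reduce the claim to $P_\sigma(K_X)=\pi^*P_\sigma(K_B+\Lambda)$. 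Since $\pi$ is surjective with connected fibers this yields $\kappa(K_X)=\kappa(\pi^*P_\sigma(K_B+\Lambda))=\kappa(P_\sigma(K_B+\Lambda))=\kappa(K_B+\Lambda)$.

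To identify $P_\sigma(K_X)$ I would treat $E$ and $G$ separately. For the vertical divisor $E$, the cited proposition of Fujita and Gongyo--Lehmann (for $f=\pi$, using that over every codimension-one component of $\pi(E)$ there is a prime divisor not contained in $\mathrm{Supp}(E)$, which holds by Theorem \ref{general}(6)) gives $P_\sigma(\pi^*(K_B+\Lambda)+E)=P_\sigma(\pi^*(K_B+\Lambda))=\pi^*P_\sigma(K_B+\Lambda)$. For $G$ I would show $G\leq N_\sigma(\pi^*(K_B+\Lambda)+E)$, so that removing it changes only the negative part: along a component $G_0$ with $\pi(G_0)\not\subset\bigcup_i Y_i$, Theorem \ref{general}(8) forces $\mathrm{mult}_{G_0}E\geq\mathrm{mult}_{G_0}G$, while the components lying over the multiple-fibre locus are absorbed into $N_\sigma$ by the same vertical-divisor proposition and by Proposition \ref{ExToZariski} applied to the codimension $\geq 2$ part. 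Combining these gives $P_\sigma(K_X)=\pi^*P_\sigma(K_B+\Lambda)$ and hence the desired equality. The delicate point, and the step I expect to require the most care, is precisely the control of $G$ over the multiple-fibre locus, where $\mathrm{mult}_{G_0}E\geq\mathrm{mult}_{G_0}G$ may fail and one must invoke the $\sigma$-decomposition machinery rather than a naive comparison of section spaces.
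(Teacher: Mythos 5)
Your first inequality, $\kappa(X)\leq\kappa(K_B+\Lambda)$, is correct and is essentially the paper's own argument: add $G$, use $\pi_*\mathcal O_X(\lfloor mE\rfloor)=\mathcal O_B$ and the projection formula. The problem is the reverse inequality, which is precisely the content of the proposition, and there your argument does not close. The whole reduction rests on the containment $G\leq N_\sigma\bigl(\pi^*(K_B+\Lambda)+E\bigr)$, but the vertical-divisor proposition you cite only yields $E\leq N_\sigma(\pi^*(K_B+\Lambda)+E)$ and $P_\sigma(\pi^*(K_B+\Lambda)+E)=P_\sigma(\pi^*(K_B+\Lambda))$; it says nothing about the components of $G$ lying over the multiple-fibre divisors $Y_i$, where (as you yourself concede) the bound $\mathrm{mult}_{G_0}E\geq \mathrm{mult}_{G_0}G$ coming from Theorem \ref{general}(8) can fail because of the extra term $\pi^*\bigl(\sum\frac{m_i-1}{m_i}Y_i\bigr)$. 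Saying these components are ``absorbed into $N_\sigma$ by the machinery'' is not an argument: you would need $\sigma_{G_0}(\pi^*(K_B+\Lambda)+E)\geq \mathrm{mult}_{G_0}G$, and nothing you have quoted produces the contribution of $\pi^*(Y_i)$ to $\sigma_{G_0}$. This is the hard kernel of the statement and it is left open. In addition, two identities you use as pivots --- $P_\sigma(\pi^*L)=\pi^*P_\sigma(L)$ for a fibration, and $\kappa(D)=\kappa(P_\sigma(D))$ (applied to $\mathbb R$-divisors whose $\sigma$-coefficients need not be rational, and whose round-downs do not commute with $\pi^*$) --- are genuine external inputs from Nakayama's book, not among the results recalled in the paper, and the pseudo-effectivity of $K_X$ itself, needed before $N_\sigma(K_X)$ is even defined, is only available once the containment above is proved.

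For comparison, the paper does not use the $\sigma$-decomposition at all here. It runs the chain
$\kappa(X)=\kappa(X,K_X+G)=\kappa(X,\pi^*(K_B+\Lambda)+E)=\kappa(X,\pi^*(K_B+\Lambda))=\kappa(B,K_B+\Lambda)$
at the level of pluricanonical sections, citing the proof of Fujita's Theorem 3.2 and Grassi's Proposition 1.3: the steps absorbing $E$ and $G$ (including the multiple-fibre correction) are carried out with the numerically-fixed-divisor calculus of Proposition \ref{NumFix} and the direct image sheaves $\pi_*\mathcal O_X(mK_X)$, which also yields the stronger statement $h^0(mK_X)=h^0(m(K_B+\Lambda))$ for $m\gg 0$. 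If you want to keep your Nakayama-style route, the honest fix is to first prove $\kappa(X)=\kappa(X,K_X+G)$ by Fujita's section-level argument and only then compare $\pi^*(K_B+\Lambda)+E$ with $\pi^*(K_B+\Lambda)$; as written, your proof has a gap exactly where the proposition is nontrivial.
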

\begin{proof} The statement holds for $\dim X_0=2$. When $\dim X_0 =3$, from the proof of \cite[Theorem 3.2]{Fujita86} we can deduce  that 
	$\kappa(X)=\kappa(X,K_X+G)=\kappa (X, \pi^*(K_B + \Lambda)+ E)= \kappa (X, \pi^*(K_B+ \Lambda ))=\kappa (B,  K_B+ \Lambda)$, as in \cite[Proposition 1.3]{Grassi91}.
	In particular $h^0(mK_X)=h^0(m(K_B+ \Lambda))$, for all $m > > 0$. The arguments about the pluricanonical rings in the proof of \cite[Theorem 3.2]{Fujita86}  and  \cite[Proposition 1.3]{Grassi91} are independent of dimension and they can be extended to $\dim X_0 \geq 4$.

\end{proof}

We also have: 
\begin{proposition}\label{Prop2.2} 
	Let $\pi :X \rightarrow B $  be an elliptic fibration between manifolds, with $\dim X= n$ and let $\Delta$ the component of the discriminant divisor associated to the invertible sheaf $\pi _* (K_ { X/B})  $.
	
	The following exact sequences and isomorphisms hold:
	$$0\rightarrow H^\ell (B, \co _B ) \rightarrow H^\ell (X,  \co _X ) \rightarrow H^ {\ell -1} (B, -\Delta) \rightarrow 0 , \  \ \ 1 < \ell < n-1 $$
	$$  H^0 (X,  \co _X )   \simeq  H^0 (B,  \co_B),  \ \ \ H^0 (X,  K _X )   \simeq  H^0 (B,  K _B + \Delta ) .$$
\end{proposition}
\begin{proof}  
Recall that  $R ^1 \pi _* ( \co  _X) \simeq  \pi _* ( K _{X/S} ) \simeq \co_B (\Delta) $ is the effective divisor of the discriminant corresponding to the non-multiple fibers (Thereom \ref{general}). The proof is in \cite[Prop.2.2]{Grassi91}. 
\end{proof}

\medskip

\subsection{Relative minimal models}


\begin{theorem}\label{RelMM} Let $\pi : X \to B$ be a an elliptic fibration between  $\Q$-factorial varieties. Assume that $\codim Sing(X) \geq 3 $
and  that 
$K_X \equiv_{\Q} \pi^*(L )+F$, where $F$ is  a $\Q$-effective $\Q$-divisor such that no irreducible component  $F_j$ of $F$ is $F_j= \pi^*(\Gamma_j)$, for some $\Gamma_j$.
 Then
\begin{enumerate}
\item[\emph{(i)}]  $K_X$ is not $\pi$- nef.
\item[\emph{(ii)}]  If in addition $X$  has  terminal singularities there is a  relative good minimal model  for $X$ over $B$, that is, there exists a birational equivalent elliptic fibration ${\pi_r}: {X_r} \to B$, such that  ${X_r}$ has $\Q$-factorial terminal singularities,
$K_{{X_r}}$ is ${\pi_r}$-nef and $\pi_r$-semiample. In addition
$K_{{X_r}} \equiv_{\mathbb{Q}} {\pi_r}^*(L)$.
\item[\emph{(iii)}] $X_r$ be as in (ii). There exists a $\Q$-divisor $\Lambda_r$ such that $L =K_B +  {\Lambda_r}$, ${\Lambda_r}$ defined as in Definition \ref{def:DeltaLambda} and $(B, {\Lambda_r})$ has klt singularities.
\end{enumerate}
 \end{theorem}

  
\begin{proof} Note that the locus of terminal singularities has codimension $\geq 3$.
\begin{enumerate} 
\item[{(i)}] Let $F_j$ be an irreducible component of $F$. If $\codim \pi (F_j) \geq 2$, then there is an effective curve $\gamma$ such that $ 
F_j \cdot \gamma<0$. 
If $\codim \pi (F_j) =1$, let $C$ be a general curve in $B$, $p$ a point in $C$ and in  the support of $\pi (F_j)$.  Consider the 
elliptic surface  $S\stackrel{def}= \pi ^*{C}, \ S \to C$
 and conclude that there is an effective (exceptional) curve $\gamma$ in the fiber over $p$ such that $K_X \cdot \gamma= F_j \cdot \gamma <0$.
\item[{(ii)}] 
Since the $\pi$-relative log canonical ring  is finitely generated (\cite{BCHM}, \cite[Theorem 6.6]{Kawamata2009}),  the hypothesis of Theorem 2.12 in \cite{HaconXu} (which generalizes \cite{Lai2011}) are satisfied. Then there exists 
a  relative good minimal model  for $X$ over $B$, that is, there exists a birational equivalent elliptic fibration ${\pi_r}: {X_r} \to B$, such that  ${X_r}$ 
$K_{{X_r}}$ is ${\pi_r}$-nef and $\pi_r$-semiample.  In particular the proof of  \cite{HaconXu}[Theorem 2.12]   shows that if $X$ has $\Q$-factorial terminal singularities, so does $X_r$.


 We now assume without loss of generality that  there is a birational morphism $\mu: X  \to {X_r}$, with $K_X \equiv \mu ^*(K_{X_r})+ \sum a_i E_i$, $E_i$ $\mu$-exceptional and $a_i >0$.
 We then have 
  $\mu^*(K_{{X_r}} - {\pi_r} ^*(L))= F-\sum a_i E_i$. 
 If $F$ is not $\mu$-exceptional, we can take  $\gamma$ as in part (i) and conclude by contradiction, since\\
  $0 \leq \mu^*(K_{{X_r}} - {\pi_r} ^*(K_B + L))\cdot  \gamma = (F-\sum a_i E_i) \cdot \gamma <0$.
\item[{(iii)}]  It follows from Lemma \ref{HironakaFlat} and \cite[Theorem 0.4]{Nakayama88}.
\end{enumerate}
\end{proof}

\begin{proposition}\label{eff} Let $\pi_i :  X_i  \to B \ i=1,2 $ be birationally equivalent elliptic fibrations, $X_i$ with  terminal singularities,  $(B, \L)$ with klt singularities and
$$K_{X_i} \equiv \pi_i ^*(K_{B} + \Lambda)+ F_i , \  F_i \ \Q\text{-Cartier } \Q \text{-divisor}.$$ 
Then and $F_1$ is  $\Q$-effective if and only if  $F_2$ is.
\end{proposition}
\begin{proof}   We take a common resolution of $X_1$ and $X_2$ and conclude by the  same argument as in the proof of   \cite[Lemma 1.5]{Grassi95}.
\end{proof}
\begin{proposition}\label{Cor1.4} Let $\pi: X \to B $ be  an  elliptic fibrations, $X$ with  terminal singularities,   $(B, \L)$ with klt singularities  and
$K_{X} \equiv \pi ^*(K_{B} + \Lambda) + F$, for some $\Q$-Cartier $\Q$-divisor $F$.\\
Then  $F$ is  $\Q$-effective if and only if  there exists a birational equivalent elliptic fibration $\pi_r: {X_r} \to B$, $X_r$ with terminal singularities, such that 
$K_{{X_r}} \equiv_{\mathbb{Q}}{\pi_r}^*(K_{B} + \Lambda)$.
\end{proposition}
\begin{proof} The same argument as in the proof of   \cite[Corollary 1.4]{Grassi95} applies.
\end{proof}
Note that part (iii) of 
Theorem \ref{RelMM} assure that if $F$ is $\Q$-effective,   $(B, \L)$ is klt. 
In particular no birationally equivalent elliptic fibration $\pi_r: {X_r} \to B$, $X_r$ with terminal singularities, such that 
$K_{{X_r}} \equiv_{\mathbb{Q}}{\pi_r}^*(K_{B} + \Lambda)$  can exist   in the following example:

\begin{example}[Terminal versus klt] \label{Ex1.1} Example 1.1 of \cite{Grassi95} provides  an elliptic fibration $\pi: X \to \P^2$, $X$ smooth, such that $K_X \equiv \pi ^*(K_B+ \Lambda )-\frac{1}{3} D$, where   $D$ is   an effective divisor. The discriminant of the elliptic fibrations consists of $2$ different lines corresponding to multiple fibers of type $_3I_0$.  By Proposition \ref{eff}  there is no birationally equivalent elliptic fibration
	$\phi: \bar X \to \P^2$, $\bar X$ with \textit{terminal singularities}, such that $K_{\bar X} = \phi ^*(L)$, for some $L$. 
However, i this particular example $K_X + \alpha D$ is not relatively nef over $\P^2$ if  $\frac{1}{3}<\alpha <1 $ and we  can explicitly apply the  relative log minimal model 
Theorems of 
\cite{HaconXu} and \cite{Lai2011} 
for the  klt pair $(X, \alpha D)$  to  obtain  a  birational  equivalent elliptic fibration $ \bar \pi : X_r  \to \P^2$ with $X_r \equiv {\bar \pi}^*(K_{B} + \Lambda)$.
$X_r$ has \textit{klt singularities}.
\end{example}

We stress that  it is not guaranteed that running the relative minimal model program on the klt pair $(X, \alpha D)$ would contract $D$, as in the example below, and give a birationally equivalent model  $\bar \phi: X_r \to B$, $X_r$ with klt singularities such that $K_{X_r}={\bar \phi} ^*(L)$,
contrary to  the claim in  \cite[Section 8]{KollarEllipticSurvey}.

\begin{example}[MM run to klt singularities does not guarantee  a  pullback formula for the canonical divisor] \label{Ex2}
Consider the 
 elliptic threefold, $f: X \rightarrow \mathbb{C}^2$,
 $y^2 = x^3 + (s^2-t^2)^4x + s^6t^6$ over $(s,t) \in \mathbb{C}^2$.
$X$ is a local 
Weierstrass model over a smooth surface.
$X$ has a log canonical singularity; in fact, 
 there is a resolution $g:Y \rightarrow X$ such that $K_Y = g^*(K_X) - D = g^*f^*(L) - D$, for some divisor $L$ on $\mathbb{C}^2$ with $D \subset Y$ being a surface over the origin in $\mathbb{C}^2$. As $D$ maps to a point on $\mathbb{C}^2$ we have that $-D$ is nef over $\mathbb{C}^2$. Thus for $(Y, \alpha D)$, there is no value of $\alpha \in [0,1]$ which when running the minimal model program over $\mathbb{C}^2$ would contract $D$, and obtain a birationally equivalent model  $\bar \phi: X_r \to \C^2$,  $X_r$ with klt singularities such that $K_{X_r}={\bar \phi} ^*(L)$.
\end{example}

In  Theorem  \ref{KawNakayama} we prove a more general and precise statement for the context of the above Examples \ref{Ex1.1} and \ref{Ex2}.

\begin{corollary} \label{relNoMf}
Let $\pi: X_0 \rightarrow B_0 $ be an elliptic fibrations between manifolds such that the ramification locus has simple normal crossing as in Theorem \ref{general}.  Assume that either $\pi$ is equidimensional or there are no multiple fibers. 
Then there exists a good minimal model ${X_r}$ of $X$ over $B$, that is a birational map $\mu: X \dashrightarrow {X_r}$ and a morphism ${\pi_r}: {X_r} \rightarrow B$ such that the diagram commutes, $K_{{X_r}}$ is ${\pi_r}$-nef, $K_X \equiv_{\mathbb{Q}}{\pi_r}^*(K_B + \Lambda)$ and ${X_r}$ has terminal singularities.
\end{corollary}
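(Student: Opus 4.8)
The plan is to deduce the statement from Theorem \ref{RelMM}; the only real work is to produce, on a good birational model, a canonical-bundle-formula decomposition $K_X \equiv_\Q \pi^*(L)+F$ to which that theorem applies. First I would invoke Lemma \ref{HironakaFlat} to replace $X_0\to B_0$ by a birationally equivalent fibration $\pi:X\to B$ with $X$ and $B$ smooth and $\pi$ satisfying the hypotheses of Theorem \ref{general}. Since $X$ is a manifold it is automatically $\Q$-factorial, terminal, and has $\codim \operatorname{Sing}(X)=\infty\geq 3$, so those hypotheses of Theorem \ref{RelMM} come for free. By Theorem \ref{general}(4) together with the definition $\Lambda = \D + \sum \frac{m_i-1}{m_i}Y_i$ I can write
\[
K_X \equiv_\Q \pi^*(K_B+\Lambda) + (E-G),
\]
so I set $L := K_B+\Lambda$ and $F := E-G$, and the entire argument reduces to checking that $F$ is $\Q$-effective with no irreducible component of the form $\pi^*(\Gamma)$.

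The key step, and the place where the dichotomy in the hypothesis is used, is exactly this effectivity check. If there are no multiple fibers, then $\sum \frac{m_i-1}{m_i}Y_i=0$ and Theorem \ref{general}(8) says precisely that $E-G$ is effective. If instead $\pi$ is equidimensional, I would show $G=0$ by a fibre-dimension count: a component $G_j$ of $G$ is a divisor with $\dim \pi(G_j)\leq \dim B - 2$ by Theorem \ref{general}(7), so the general fibre of $\pi|_{G_j}$ has dimension $\geq (\dim X - 1)-(\dim B - 2) = (\dim X - \dim B)+1$, which exceeds the common fibre dimension $\dim X-\dim B$ of the equidimensional map — impossible unless $G=0$, whence $F=E$ is effective. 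In either case $\operatorname{Supp}(F)\subseteq \operatorname{Supp}(E)$, and Theorem \ref{general}(6) (that $E$ restricts to proper transforms of exceptional curves on a general fibre surface, hence contains no full fibre) guarantees that no component of $F$ is a pullback $\pi^*(\Gamma)$.

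With $F$ verified to be $\Q$-effective and free of pullback components, I would apply Theorem \ref{RelMM}(ii),(iii) directly. This produces the birational model $\pi_r:X_r\to B$ with $X_r$ $\Q$-factorial terminal, $K_{X_r}$ both $\pi_r$-nef and $\pi_r$-semiample, $K_{X_r}\equiv_\Q \pi_r^*(L)=\pi_r^*(K_B+\Lambda)$, and $(B,\Lambda)$ klt — which is exactly the asserted relative good minimal model. The degenerate case $F=0$ is immediate, since then $K_X=\pi^*(L)$ is already $\pi$-nef and one may take $X_r=X$.

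I expect the main obstacle to be the bookkeeping in the effectivity step — in particular ensuring that the multiple-fibre contribution is fully absorbed into $\Lambda$ so that the residual $E-G$ is genuinely effective rather than effective only after adding back a pullback. This is precisely the distinction from the general situation: Example \ref{Ex1.1} shows that once one allows both multiple fibers and non-equidimensional behaviour, $F$ can acquire a negative fractional pullback term (there $-\tfrac13 D$), Theorem \ref{RelMM} no longer applies, and one is forced into klt rather than terminal singularities. A secondary technical point to confirm is that the relevant hypothesis — equidimensionality or absence of multiple fibers — is preserved, or can be arranged, on the flattened model $\pi:X\to B$ produced by Lemma \ref{HironakaFlat}.
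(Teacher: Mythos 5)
Your proof is correct and follows the same core argument as the paper: the paper's entire proof is the one-line observation that $F=E-G$ is effective when there are no multiple fibers (Theorem \ref{general}(8)) and that $F=E$ is effective when $\pi$ is equidimensional (since no component of $G$ can survive a fibre-dimension count), which you reproduce with full details. The only difference is the final citation: the paper concludes via Proposition \ref{Cor1.4} (effectivity of $F$ is \emph{equivalent} to the existence of the terminal model with $K_{X_r}\equiv_{\mathbb{Q}}\pi_r^*(K_B+\Lambda)$), whereas you route through Theorem \ref{RelMM}, which costs you the extra --- but correctly executed --- verification via Theorem \ref{general}(6) that no component of $F$ is a pullback $\pi^*(\Gamma)$.
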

\begin{proof} In fact  if there are no multiple fiber  $F= E-G$ is effective; if $\pi$ is equidimensional then  $F=E$ is  effective and we conclude by Proposition \ref{Cor1.4}
\end{proof}

\medskip

\subsection{Birational equivalent elliptic fibrations, minimality, Mori fiber spaces and the canonical bundle formula}

\begin{proposition}\label{canF} Let $\pi: X \to B$ an elliptic fibration  between manifolds. Assume that the ramification divisor of the fibration has simple normal crossing as in Theorem \ref{general}. Then,  there is a birationally equivalent elliptic fibration $\pi_r: X_r \to B_r$
such that $X_r$ has terminal singularities,  $(B_r, \Lambda_r)$ has klt singularities and  $K_{X_r} \equiv_{\Q} \pi_r^*(K_{B_r}+ \Lambda_r)$. \end{proposition}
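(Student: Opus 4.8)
The plan is to reduce the statement to the equidimensional case already settled in Corollary \ref{relNoMf}, the reduction being achieved by a suitable birational modification of the base.

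First I would record the starting point. By Theorem \ref{general} the canonical bundle formula gives
$$K_X \equiv_\Q \pi^*(K_B + \Lambda) + E - G,$$
with $E$ effective satisfying $\pi_*\co_X([mE]) = \co_B$ for all $m$ (so that no component of $E$ is a pullback $\pi^*\Gamma$), and $G$ effective with $\codim \pi(G) \geq 2$. Thus the error term $F = E - G$ fails to be $\Q$-effective only because of $G$. Once the error is effective and has no pullback components, the target model is produced by Proposition \ref{Cor1.4}, equivalently by running the relative MMP of Theorem \ref{RelMM}, whose hypotheses then hold: this contracts the effective error and yields $\pi_r : X_r \to B_r$ with $X_r$ of $\Q$-factorial terminal singularities and $K_{X_r} \equiv_\Q \pi_r^*(K_{B_r}+\Lambda_r)$, the klt property of $(B_r,\Lambda_r)$ being exactly Theorem \ref{RelMM}(iii). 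So everything comes down to disposing of $G$.

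The key observation is that $G$ vanishes as soon as the fibration is equidimensional: a $\pi$-vertical prime divisor on an equidimensional fibration cannot have image of codimension $\geq 2$, by the same dimension count used in the proof of Corollary \ref{relNoMf}, so $G = 0$ and $F = E$ is effective. I would therefore change the base: by the flattening of Lemma \ref{HironakaFlat} there is a smooth blowup $B_r \to B$ over which the fibration becomes flat, hence equidimensional, and I would take the input to the relative MMP to be a smooth model of the corresponding fibration satisfying the hypotheses of Theorem \ref{general}. On such a model $G = 0$, the error is effective, and Corollary \ref{relNoMf} applies.

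The main obstacle is precisely this last reconciliation. The flat model furnished by Lemma \ref{HironakaFlat} need not have smooth total space, whereas the canonical bundle formula of Theorem \ref{general} requires a smooth source with simple normal crossing ramification; resolving the flat model to SNC form threatens to create new $\pi_r$-exceptional divisors lying over codimension-$\geq 2$ centers of $B_r$, thereby reintroducing a nonzero $G$. The heart of the argument is thus to carry out the resolution over the equidimensional base so that every such exceptional divisor either dominates an exceptional divisor of $B_r \to B$, in which case it is recorded in the discriminant data defining $\Lambda_r$ rather than in the error, or is exceptional for the birational morphism to the relative minimal model and so is contracted by the MMP of Theorem \ref{RelMM}. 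Once the error term has been made effective in this way, the conclusion follows as above.
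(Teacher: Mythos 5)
There is a genuine gap, and it is the one you yourself flag as ``the main obstacle'': the reduction to the equidimensional case is never actually carried out. Lemma \ref{HironakaFlat} produces a flat model $X_1 \to B_1$, but $X_1$ is in general singular, and any resolution $X \to X_1$ bringing you back into the hypotheses of Theorem \ref{general} (smooth total space, SNC ramification) will in general introduce exceptional divisors lying over codimension $\geq 2$ centers of $B_1$ --- that is, it reintroduces a nonzero $G$, and $F = E - G$ is again not visibly effective. Your proposed fix is circular: you say such divisors are ``contracted by the MMP of Theorem \ref{RelMM}'', but Theorem \ref{RelMM} takes the $\Q$-effectivity of the error term (with no pullback components) as a \emph{hypothesis}, so it cannot be invoked to dispose of the negative part $-G$. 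Nothing in the proposal establishes that the resolution can be chosen so that the error becomes effective, and indeed Example \ref{Ex1.1} together with Proposition \ref{Cor1.4} shows that over a fixed base the effectivity of the error (equivalently, the existence of a terminal model with $K$ a pullback over that base) can genuinely fail; so the choice of $B_r$ cannot be made in advance by a flattening blowup --- it has to come out of the minimal model process itself.

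For comparison, the paper's proof does not attempt to make the error term effective at all. It applies the relative good minimal model theorem (the proof of Theorem 2.12 in \cite{HaconXu}, together with \cite{BCHM}) directly to $X$ over $B$, obtaining $\pi_{r'}: X_r \to B$ with $X_r$ terminal and $K_{X_r}$ $\pi_{r'}$-nef and $\pi_{r'}$-semiample; the new base $B_r$ is then the target of the relative semiample fibration of $K_{X_r}$ over $B$, which is birational over $B$ because $K_{X_r}$ is trivial on the generic elliptic fiber. On $B_r$ one has $K_{X_r} \equiv \pi_r^*(L)$, and $L = K_{B_r} + \Lambda_r$ with $(B_r,\Lambda_r)$ klt by the canonical bundle formula (\cite[Theorem 0.4]{Nakayama88}). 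If you want to salvage your strategy, you would need to replace the flattening step by this relative MMP plus semiample-fibration step; as written, the argument does not close.
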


\begin{proof} There is a relative good minimal model $\pi_{r'}: X_r \to B$, by  the proof of Theorem 2.12 in \cite{HaconXu} and \cite{BCHM}. Note that  \cite{HaconXu} generalizes \cite{Lai2011}).
 In particular there exist a birational  morphism  $\phi: B_r \to B$, a birationally equivalent fibration $\pi_r:  X_r \to B_r$  and  $L_r$  $\phi$-semiample such that $K_{X_r} \equiv  \pi_r ^*(L)$ and $\pi_{r'}= \phi \cdot \pi_r $.
  Note that $\dim B_r \geq \dim B$, that   the morphism $\phi$ has to be birational; $\pi_r$ is a birationallly equivalent elliptic fibration. $X_r$  has terminal singularities and then 
$K_{X_r}=\pi_r^*(K_{B_r}+\Lambda_r)$ and$ (B_r,\Lambda_r)$ is klt \cite[Corollary 0.4]{Nakayama88}.

\end{proof}
Examples \ref{Ex1.1} and \ref{Ex2} show that it can be necessary to birationally modify the base: $B_r \to B$.
\begin{corollary} \label{MMb}
Let $\pi: X \rightarrow B $ be an elliptic fibration between manifolds such that the ramification locus has simple normal crossing as in Theorem \ref{general}.  
\begin{enumerate}
\item If $K_{B} + \Lambda$ is not  pseudo effective, there exists a birational equivalent fibration $\bar{X} \to \bar B$, $\bar{X}$ with terminal singularities, $(\bar B, \bar \Lambda)$ with klt singularities such that $K_{\bar X}  \equiv \bar{\pi}^*(K_{\bar B} + \bar \Lambda)$. In  addition $X$ is birationally a Mori fiber space.
\item If $K_{B} + \Lambda$ is pseudo effective  and klt flips exist and terminate in dimension $n-1$, then there exists a birational equivalent fibration $\bar{X} \to \bar B$, $\bar{X}$ minimal, with terminal singularities, $(\bar B, \bar \Lambda)$ with klt singularities such that $K_{\bar X}  \equiv \bar{\pi}^*(K_{\bar B} + \bar \Lambda)$.
\end{enumerate}
\end{corollary}
\begin{proof}  
Let $\pi_r: {X_r} \to B_r$ a birationally equivalent elliptic fibration as in Proposition \ref{canF}.
If $K_{B} + \Lambda$ is not  pseudo effective, then by  \cite[Corollary 1.3.2]{BCHM} $(B, \Lambda)$ is birationally a Mori fiber space, that is there exist a is a $(K_B+ \Lambda))$-negative birational contraction $ \psi: B  \dasharrow  \bar B$ and a morphism $f : \bar B \to Z $ with connected fibers such that $dim(Z) < dim(B)$ and ${(K_{\bar B} + \psi_*\Lambda)}_{|F}$ is anti-ample for a general fiber F of $f$.   We then conclude by applying  Corollary 2.13 in \cite{HaconXu} to every birational contraction and flip in $\psi$. This shows (1). Part (2) follows similarly. \end{proof}

\begin{theorem} \label{MM}
Let $\pi: Y \rightarrow T$ be an elliptic fibration 
\begin{enumerate}
\item If $\kappa (Y) \geq 0$ and klt flips exist and terminate in dimension $n-1$, then there exists a birational equivalent fibration $\bar{X} \to \bar B$, $\bar{X}$ minimal, with terminal singularities, $(\bar B, \bar \Lambda)$ with klt singularities such that $K_{\bar X}  \equiv \bar{\pi}^*(K_{\bar B} + \bar \Lambda)$.
\item Let be $\Lambda_T$ the $\Q$-divisor defined in Definition \ref{def:DeltaLambda}.
 If $K_T+ \Lambda_T$ is not pseudo-effective 
 there exists a birational equivalent fibration $\bar{X} \to \bar B$, $\bar{X}$ with terminal singularities, $(\bar B, \bar \Lambda)$ with klt singularities such that $K_{\bar X}  \equiv \bar{\pi}^*(K_{\bar B} + \bar \Lambda)$. In  addition $X$ is birationally a Mori fiber space.
\end{enumerate}
\end{theorem}
\begin{proof}
It follows from Corollary \ref{MMb} and Proposition \ref{kodEff}.
\end{proof}

\medskip

\subsection{ Jacobians}\label{XBJac}~

\smallskip

Let $\pi :X \rightarrow B $  be an elliptic fibration between manifolds, with $\dim X= n$ and   the ramification divisor  a divisor with simple normal crossings . 
The corresponding Jacobian elliptic fibrations $\pi_J: J(X) \to B$   is defined birationally \cite{DolgachevGross} from the relative minimal model of  $X \to B$, which exists by Theorem \ref{RelMM}. 

\begin{proposition}\label{XJX} Let $\pi :X \rightarrow B $  be an elliptic fibration between manifolds, with $\dim X= n$ and 
	the ramification divisor a divisor with simple normal crossings . Let $\pi_J: J(X) \to B$ be the Jacobian fibration as above. Then:
	$$h^i(X, \mathcal{O}_X)=h^i(J(X), \mathcal{O}_{J(X)}),  \ 1< j < n-1.$$
\end{proposition}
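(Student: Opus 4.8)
The plan is to push everything down to the base $B$ using Proposition \ref{Prop2.2}, applied simultaneously to $\pi$ and to the Jacobian fibration $\pi_J$, and then to observe that the two resulting exact sequences have the same outer terms and, ultimately, the same connecting maps. First I would record that $\pi_J:J(X)\to B$ and $\pi:X\to B$ share the same base, the same $\mathbf{J}$-invariant morphism $\mathbf{J}:B\to\mathbb{P}^1$, and the same Kodaira fiber type over the generic point of each component of the discriminant; this is the defining feature of the Jacobian fibration (\cite{DolgachevGross}, built from the relative minimal model of Theorem \ref{RelMM}). By Theorem \ref{general}(2)--(3) together with relative duality for a relative-dimension-one fibration, the sheaves $\pi_*\co_X$ and $R^1\pi_*\co_X$ depend only on these data, so
\[
\pi_*\co_X = (\pi_J)_*\co_{J(X)} = \co_B, \qquad R^1\pi_*\co_X = R^1(\pi_J)_*\co_{J(X)} = \co_B(-\Delta),
\]
with one and the same ramification divisor $\Delta$.

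Next I would apply Proposition \ref{Prop2.2} to both fibrations. For $1<\ell<n-1$ it yields the two left-exact sequences
\[
0 \to H^\ell(B,\co_B) \to H^\ell(X,\co_X) \to H^{\ell-1}(B,-\Delta),
\]
\[
0 \to H^\ell(B,\co_B) \to H^\ell(J(X),\co_{J(X)}) \to H^{\ell-1}(B,-\Delta),
\]
whose outer terms coincide by the previous step. These are the edge sequences of the Leray spectral sequences of $\pi$ and $\pi_J$, which have only the rows $q=0,1$; thus to conclude $h^\ell(X,\co_X)=h^\ell(J(X),\co_{J(X)})$ it suffices to show the images of the two right-hand maps have equal dimension, equivalently that the $d_2$-differentials $H^{\ell-1}(B,-\Delta)\to H^{\ell+1}(B,\co_B)$ agree for $\pi$ and $\pi_J$.

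The main obstacle is precisely this last point: equal cohomology sheaves $R^q\pi_*\co_X$ only give the two spectral sequences the same $E_2$-page, not a priori the same differentials or abutment. To close the gap I would upgrade the comparison to the derived pushforward. Since $X\to B$ is a torsor under $J(X)\to B$, the two fibrations are analytically locally isomorphic over $B$, with gluing data given by fiberwise translations. Translations lie in the identity component and so act trivially on the cohomology of the structure sheaf of each fiber; hence the local quasi-isomorphisms $R\pi_*\co_X\simeq R(\pi_J)_*\co_{J(X)}$ glue to a global identification in $D^b(B)$ in the relevant range. This identifies the two Leray spectral sequences, differentials included, and delivers the claimed equality. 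The delicate bookkeeping is at the multiple and singular fibers, where $X$ and $J(X)$ genuinely differ: the restriction to $1<\ell<n-1$ and the precise shape of $\Delta$ built into Proposition \ref{Prop2.2} are exactly what absorb these discrepancies, and verifying that they do is the step that requires the most care.
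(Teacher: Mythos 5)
Your overall strategy is the paper's: show that $X$ and $J(X)$ have the same base and the same direct images $\pi_*\mathcal{O}$ and $R^1\pi_*\mathcal{O}$ (equivalently the same $\Delta$, which is exactly the content of \cite[Prop.~2.17]{DolgachevGross} that the paper quotes), then read off the Hodge numbers from Proposition \ref{Prop2.2}. You are also right to flag that, as printed, the sequences in Proposition \ref{Prop2.2} are only left exact, so equality of the outer terms does not by itself force equality of the middle dimensions. Where you diverge from the paper is in how you propose to close that gap, and this is where the argument breaks down.

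The torsor/derived-pushforward step is not sound as sketched. First, $X\to B$ is a torsor under $J(X)\to B$ only over the locus of smooth fibers; over the multiple fibers, and wherever the fiber types differ, the two fibrations are not locally analytically isomorphic over $B$ at all, so there are no local quasi-isomorphisms to glue precisely on the loci you acknowledge are the delicate ones. Second, even over the good locus, gluing local quasi-isomorphisms of two-term complexes into a global identification in $D^b(B)$ is obstructed: the two objects $R\pi_*\mathcal{O}_X$ and $R(\pi_J)_*\mathcal{O}_{J(X)}$ are extensions of $R^1$ by $R^0$ classified in $\mathrm{Ext}^2_B\bigl(\mathcal{O}_B(-\Delta),\mathcal{O}_B\bigr)$, and triviality of the transition automorphisms on cohomology sheaves does not by itself make these classes agree; ``the local identifications glue'' is an assertion, not an argument. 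The repair the paper is implicitly relying on is much cheaper and avoids comparing differentials altogether: by Koll\'ar's torsion-freeness and decomposition theorem, $R\pi_*\omega_X\simeq\bigoplus_i R^i\pi_*\omega_X[-i]$, and dually the Leray spectral sequence for $\mathcal{O}_X$ degenerates at $E_2$ for each of $\pi$ and $\pi_J$ \emph{separately}. Hence both sequences in Proposition \ref{Prop2.2} are in fact short exact, both $d_2$'s vanish, and
$h^\ell(X,\mathcal{O}_X)=h^\ell(B,\mathcal{O}_B)+h^{\ell-1}(B,-\Delta)=h^\ell(J(X),\mathcal{O}_{J(X)})$
follows from $\Delta=\Delta_J$ with no need to identify the two spectral sequences.
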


\begin{proof}  In fact  it follows from \cite[Proposition 2.17]{DolgachevGross} that the ramification divisor  of  the Jacobian fibration  is  a simple normal crossing divisor. In addition $\Delta=\Delta_J$.
	Proposition \ref{Prop2.2} implies the statement.
\end{proof}

\begin{proposition}\label{ko} With the same hypothesis of Proposition \ref{XJX},
	assume also that $h^0(X,K_X)=1$ and $\kappa(X)=0$, then  
	$h^0(J(X), K_{J(X)})=1$ 
	and $\kappa(J(X))=0$,
\end{proposition}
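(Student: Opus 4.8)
The plan is to establish the two assertions separately: I will get $h^0(J(X),K_{J(X)})=1$ directly from the $H^0$-comparison of Proposition \ref{Prop2.2}, and then pin $\kappa(J(X))$ to $0$ by squeezing it between $\kappa(J(X))\ge 0$ and $\kappa(J(X))\le\kappa(X)=0$.

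First I would compute the geometric genus. Applying the isomorphism $H^0(X,K_X)\simeq H^0(B,K_B+\Delta)$ of Proposition \ref{Prop2.2} to $\pi:X\to B$ gives $h^0(B,K_B+\Delta)=h^0(X,K_X)=1$. The same Proposition applied to a smooth model of the Jacobian fibration $\pi_J:J(X)\to B$ yields $H^0(J(X),K_{J(X)})\simeq H^0(B,K_B+\Delta_J)$; this is the same application of Proposition \ref{Prop2.2} to $J(X)$ already used in the proof of Proposition \ref{XJX}, so the requisite smooth model with simple normal crossing ramification is available. Since the proof of Proposition \ref{XJX} establishes $\Delta=\Delta_J$ (via \cite{DolgachevGross}), the two right-hand sides coincide, whence $h^0(J(X),K_{J(X)})=h^0(B,K_B+\Delta)=1$. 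In particular the first plurigenus of $J(X)$ is positive, so $\kappa(J(X))\ge 0$.

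For the upper bound I would invoke Proposition \ref{kodEff}, which gives $\kappa(X)=\kappa(B,K_B+\Lambda)$ and, applied to $J(X)$, $\kappa(J(X))=\kappa(B,K_B+\Lambda_J)$, where $\Lambda_J$ is the divisor of Lemma \ref{HironakaFlat} attached to $\pi_J$. The section of the Jacobian now enters decisively: $\pi_J$ carries a zero-section and hence has no multiple fibers, so the multiple-fiber contribution to $\Lambda_J$ vanishes and $\Lambda_J=\Delta_J=\Delta$. By contrast $\Lambda=\Delta+\sum\frac{m_i-1}{m_i}Y_i$, so $\Lambda-\Lambda_J=\sum\frac{m_i-1}{m_i}Y_i$ is an effective $\mathbb{Q}$-divisor. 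Adding an effective divisor cannot decrease Kodaira dimension, so $\kappa(B,K_B+\Lambda)\ge\kappa(B,K_B+\Lambda_J)$, that is $\kappa(J(X))\le\kappa(X)=0$. Together with $\kappa(J(X))\ge 0$ this forces $\kappa(J(X))=0$.

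The monotonicity of $\kappa$ under adding an effective divisor is routine, following from the inclusion of the corresponding spaces of sections. The step requiring the most care is the bookkeeping around the birational definition of $J(X)$: Propositions \ref{Prop2.2} and \ref{kodEff} are stated for fibrations between manifolds with simple normal crossing ramification as in Lemma \ref{HironakaFlat}, so I would work on a fixed smooth model of $\pi_J$ and verify that both the equality $\Delta=\Delta_J$ and the absence of multiple fibers persist there. Since this is exactly the setup already extracted in the proof of Proposition \ref{XJX}, the remaining task is simply to confirm that a single such model supports both the $H^0$-comparison and the Kodaira-dimension comparison simultaneously.
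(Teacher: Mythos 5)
Your proof is correct and follows essentially the same route as the paper's: the first plurigenus of $J(X)$ is computed via Proposition \ref{Prop2.2} together with $\Delta=\Delta_J$, and the Kodaira dimension is pinned down by comparing $\kappa(B,K_B+\Lambda)$ with $\kappa(B,K_B+\Delta_J)$ using the effectivity of the multiple-fiber term, exactly as in the paper's chain of plurigenera (in)equalities. Your version merely makes explicit two points the paper leaves implicit, namely that $\Lambda_J=\Delta_J$ because the Jacobian fibration has a section, and that the squeeze $0\le\kappa(J(X))\le\kappa(X)=0$ is what closes the argument.
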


\begin{proof}  Since $\Delta=\Delta_J$,  Proposition \ref{Prop2.2}  implies that $h^0(J(X), K_{J(X)})=h^0(X, K_X)=1$. Furthermore,   as in the proof of Proposition \ref{kodEff},  for $\ \ m >>0$,
	$$h^0(X, mK_X)=h^0(B, m(K_B+ \Lambda)) \geq h^0(B, m(K_B+ \Delta))= h^0(B, m(K_B+ \Delta_J))=h^0(J(X), mK_{J(X)}). $$
	
\end{proof}

\begin{corollary}\label{CY} If  $X$ has  birationally a trivial canonical divisor and $h^i(X, \co_X)=0, \ 0 <i <n$, that is if $X$ is birationally a Calabi-Yau variety, so is $J(X)$.
\end{corollary}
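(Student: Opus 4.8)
The plan is to transfer the two defining conditions of a birational Calabi--Yau variety from $X$ to $J(X)$ separately, exploiting the comparison results already established for the two fibrations over the common base $B$. The key structural facts I would use are that $X$ and $J(X)$ share the same ramification data, $\Delta=\Delta_J$ (as in the proof of Proposition \ref{XJX}), that $\pi_*\co_X=\co_B$ for an elliptic fibration, and that $J(X)\to B$ carries a section and hence has no multiple fibers. The hypothesis that $X$ is birationally Calabi--Yau means $K_X$ is birationally trivial (so $\kappa(X)=0$ and $h^0(X,K_X)=1$) together with $h^i(X,\co_X)=0$ for $0<i<n$.

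First I would dispose of the canonical divisor. Since $\kappa(X)=0$ and $h^0(X,K_X)=1$, Proposition \ref{ko} applies verbatim and yields $h^0(J(X),K_{J(X)})=1$ and $\kappa(J(X))=0$. Passing to the relative elliptic minimal model of $J(X)\to B$ furnished by Theorem \ref{RelMM}, on which $K_{J(X)}\equiv_{\Q}\pi_J^*(K_B+\Lambda_J)$, and combining with $\kappa(K_B+\Lambda_J)=\kappa(J(X))=0$ (Proposition \ref{kodEff}), one concludes that $K_{J(X)}$ is birationally trivial, which is the canonical part of the Calabi--Yau condition for $J(X)$.

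Next comes the cohomology of the structure sheaf, which I would handle in three ranges. For $1<i<n-1$ Proposition \ref{XJX} gives directly $h^i(J(X),\co_{J(X)})=h^i(X,\co_X)=0$. For $i=1$, the five-term exact sequence of the Leray spectral sequence for $\pi$ (using $\pi_*\co_X=\co_B$) gives an injection $H^1(B,\co_B)\hookrightarrow H^1(X,\co_X)$; since the target vanishes by hypothesis, $h^1(B,\co_B)=0$. Applying the same sequence to $\pi_J$,
\[
0\to H^1(B,\co_B)\to H^1(J(X),\co_{J(X)})\to H^0(B,R^1\pi_{J*}\co_{J(X)}),
\]
and using that $J(X)\to B$ has a section, so $R^1\pi_{J*}\co_{J(X)}\cong(\pi_{J*}K_{J(X)/B})^{\vee}=\co_B(-\Delta)$ is the inverse of a line bundle (Theorem \ref{general}(2)) with no torsion from multiple fibers, together with the effectivity of $\Delta$ to get $H^0(B,\co_B(-\Delta))=0$, I conclude $h^1(J(X),\co_{J(X)})=0$. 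Finally, for $i=n-1$ I would invoke Serre duality on a smooth model: $h^{n-1}(J(X),\co_{J(X)})=h^1(J(X),K_{J(X)})$, which equals $h^1(J(X),\co_{J(X)})=0$ because $K_{J(X)}$ is birationally trivial by the previous step, all the $h^i(\cdot,\co)$ being birational invariants of smooth projective models.

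The main obstacle is the endpoint $i=1$: Propositions \ref{XJX} and \ref{Prop2.2} control only the middle range $1<i<n-1$, precisely because at the edges the differentials of the Leray spectral sequence and the torsion of $R^1\pi_*\co_X$ interfere, and that torsion can genuinely differ between $X$ and $J(X)$ since $X$ may carry multiple fibers while $J(X)$ does not. The resolution is to turn this asymmetry to our advantage: instead of comparing $H^1(X,\co_X)$ and $H^1(J(X),\co_{J(X)})$ directly, I route both through $H^1(B,\co_B)$, use $h^1(X,\co_X)=0$ to force $q(B)=0$, and use the torsion-freeness of $R^1\pi_{J*}\co_{J(X)}$ guaranteed by the section to kill the remaining edge term for $J(X)$. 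The case $i=n-1$ then reduces to $i=1$ by duality, so it presents no independent difficulty.
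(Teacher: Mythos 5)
Your proof is correct and follows essentially the same route as the paper's one-line proof, which cites Propositions \ref{Prop2.2} and \ref{ko}: the canonical part comes from Proposition \ref{ko}, and the vanishing of $h^i(J(X),\co_{J(X)})$ comes from comparing both fibrations over $B$ via $\Delta=\Delta_J$ and the Leray sequence of Proposition \ref{Prop2.2}. Your explicit handling of the endpoint cases $i=1$ (five-term sequence, the section of $\pi_J$, and $H^0(B,\co_B(-\Delta))=0$ — where one should note that $\Delta\neq 0$ is itself forced by $h^1(X,\co_X)=h^2(X,\co_X)=0$, not by effectivity alone) and $i=n-1$ (Serre duality) makes explicit the one step that the cited propositions, which only cover $1<i<n-1$, leave implicit.
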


\begin{proof} The statement follows from Proposition \ref{Prop2.2} and \ref{ko}.
\end{proof}

\section{Non negative Kodaira dimension, minimal models, Zariski decomposition and the canonical bundle formula}\label{PosKod}

We prove,  assuming  standard conjectures in the theory of minimal models, 
a birational Fujita-Zariski decomposition for the canonical divisor for elliptic fibrations with non-negative Kodaira dimension. 
 We use  properties of the two definitions of the Fujita-Zariski decomposition.  From
\cite{Birkar1}, we use the relationship between Fujita-Zariski decomposition and minimal model theory; from \cite{Fujita86}, we use the relationship between Fujita-Zariski decomposition and the properties of numerically fixed divisors. 
This, in conjunction with the canonical bundle formula in Theorem \ref{general}, is enough to show a relationship between the total space and base space of an elliptic fibration through a birational Fujita-Zariski decomposition.


\subsection{Generalized Zariski Decompositions for Elliptic Fibrations}

\smallskip
{In the following, we establish the compatibility of the Fujita-Zariski decomposition with elliptic fiber spaces which extends the arguments of \cite{Fujita86} for elliptic threefold to higher dimensions. Furthermore, we will show the explicit decomposition in the case where we have existence of log minimal models for the base of the fiber space.}

\begin{theorem}
\label{EllFZD}
Given an elliptic fibration $X_0 \rightarrow B_0$, there exist a birationally equivalent fibration $X \rightarrow B$ and a $\mathbb{Q}$-divisor $\Lambda$ on $B$ such that $K_X$ birationally admits a Fujita-Zariski decomposition if and only if $K_B + \Lambda$ birationally admits a Fujita-Zariski decomposition where $X$ and $(B, \Lambda)$ are as in Lemma \ref{HironakaFlat}.
\end{theorem}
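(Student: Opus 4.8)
The plan is to exploit the canonical bundle formula from Theorem~\ref{general}, which on the model $\pi:X\to B$ of Lemma~\ref{HironakaFlat} reads
\[
K_X \equiv_{\Q} \pi^*(K_B + \Lambda) + E - G,
\]
where $E$ is the effective correction term satisfying $\pi_*\co_X([mE])=\co_B$ for all $m$ and $G$ is an effective $\Q$-divisor with $\codim\pi(G)\ge 2$. The strategy is to show that the ``non-pullback'' pieces $E$ and $G$ are exactly the kind of contributions that can be stripped off without affecting the existence of a Fujita-Zariski decomposition, so that a decomposition of $K_X$ is equivalent to one of $\pi^*(K_B+\Lambda)$, and the latter is equivalent to one of $K_B+\Lambda$ on the base. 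Since the statement is only about \emph{birational} existence, I am free to pass to any convenient resolution, which is what lets me invoke the Fujita-type propositions.

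\textbf{Step 1 (handle $G$).} The term $G$ is effective with $\codim\pi(G)\ge 2$, so it is a divisor whose image in $B$ has codimension at least two. This is precisely the hypothesis of Proposition~\ref{ExToZariski} (Fujita's Prop.~1.24): for a surjective morphism of manifolds and an effective $\Q$-divisor $R$ with $\dim\pi(R)\le \dim B - 2$, the divisor $\pi^*L + R$ birationally admits a Fujita-Zariski decomposition if and only if $L$ does. I would apply this with $L = K_B+\Lambda$ after first absorbing $E$ into the pullback part (see Step 2), treating $G$ as the codimension-$\ge 2$ remainder. The sign of $G$ (it appears as $-G$) requires care: I would rewrite $K_X + G \equiv_{\Q} \pi^*(K_B+\Lambda)+E$ and argue that adding an effective divisor supported over codimension-$\ge 2$ loci does not change the birational Fujita-Zariski question, again via Proposition~\ref{ExToZariski} applied on a suitable resolution.

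\textbf{Step 2 (handle $E$).} The divisor $E$ is the genuinely horizontal/vertical exceptional contribution; the key property recorded in Theorem~\ref{general}(6) is that $\pi_*\co_X([mE])=\co_B$ and $E|_{\pi^{-1}(C)}$ is a union of proper transforms of exceptional curves for $C$ general. This is exactly the structure that makes $E$ \emph{numerically fixed} by $\pi^*(K_B+\Lambda)+E$ in the sense of Definition~\ref{ZD}; indeed the second Fujita/Gongyo--Lehmann proposition gives that such a $D$ with $\dim\pi(D)<\dim B$ and the no-common-divisor condition is numerically fixed by $\pi^*L + D$. Having established that $E$ is numerically fixed, Proposition~\ref{NumFix}(2) then applies verbatim: $\pi^*(K_B+\Lambda)$ admits a Fujita-Zariski decomposition if and only if $\pi^*(K_B+\Lambda)+E$ does, with the \emph{same} nef part. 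Combining Steps 1 and 2 reduces the existence of a birational Fujita-Zariski decomposition of $K_X$ to that of $\pi^*(K_B+\Lambda)$.

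\textbf{Step 3 (descend along $\pi$).} Finally I must show $\pi^*(K_B+\Lambda)$ birationally admits a Fujita-Zariski decomposition on $X$ if and only if $K_B+\Lambda$ does on $B$. One direction pulls a decomposition back along $\pi$ and checks that pullback of nef is $\pi$-nef and pullback preserves effectivity; the other pushes forward, using that $\pi$ has connected fibers of relative dimension one, so $\pi_*$ of the nef and effective parts recovers a decomposition downstairs, and the Fujita-Zariski defining property (functoriality under all birational morphisms $f:W\to X$) is inherited because every such morphism upstairs factors, after further blowing up, through one compatible with $\pi$. \textbf{The main obstacle} I expect is precisely this descent step: verifying that the Fujita-Zariski \emph{extremality} condition (not merely the Weak Zariski condition that $P$ is nef and $N$ effective) is preserved in both directions, since that condition quantifies over \emph{all} birational models $W\to X$, and one must ensure that an arbitrary competitor decomposition on such a $W$ can be compared to the pullback of the base decomposition. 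I would manage this by reducing to the universal property of the relative Picard group and Fujita's numerically-fixed formalism rather than arguing model-by-model, so that the quantifier over birational morphisms is handled uniformly by the numerically-fixed machinery already invoked in Step 2.
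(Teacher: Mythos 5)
Your proposal follows essentially the same route as the paper: the canonical bundle formula $K_X \equiv_{\Q} \pi^*(K_B+\Lambda)+E-G$ combined with Fujita's numerically-fixed machinery (Proposition \ref{NumFix} together with \cite[Prop.~1.10]{Fujita86}) to strip off $E$ and $G$, and Proposition \ref{ExToZariski} to pass between $\pi^*(K_B+\Lambda)$ and $K_B+\Lambda$. The ``main obstacle'' you flag in Step 3 is in fact already dispatched by Proposition \ref{ExToZariski} applied with $R=0$, so no hand-made pushforward argument is needed; the only other (minor) divergence is that the paper treats the $-G$ term by noting $G$ (and the discrepancy divisor $F$ of $X\to X_0$) is exceptional for the birational morphism $\nu$ and hence numerically fixed by $K_X+G$, rather than by a second application of Proposition \ref{ExToZariski}.
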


\begin{proof}
Without loss of generality we can assume that $X_0$ and $B_0$ are smooth, with ramification divisor $\Lambda_0$ having simple normal crossing. As in Lemma \ref{HironakaFlat} we have:
\[
\begin{tikzcd}
\displaystyle
X_0 \arrow[d, "\pi_0"']& X_1 \arrow[l]\arrow[d]& \arrow[bend right=30,swap]{ll}{\nu}  X \arrow[l]\arrow[d, "\pi"]\\
B_0 & B_1\arrow[l]& B \arrow[l]\\
\end{tikzcd}
\]
where all the horizontal maps are birational morphisms, $X_1$ is the resolution of the flattening of $\pi_0$ and 
 and $\pi:X \rightarrow B$ is as in Theorem \ref{general}. We have
$
K_X = \pi^*(K_B + \Lambda) + E - G
$
where $(B,\Lambda)$ is a klt pair of dimension $n - 1$.

Assume that $K_X$ birationally admits a Fujita-Zariski decomposition. Without loss of generality, we assume that $K_X$ admits a Fujita-Zariski decomposition, in the sense of FZ-A, equivalently, FZ-B, as in Definition \ref{ZD} and Remark \ref{FZAB}.
Then we have 

\[
P + N = K_X= \pi^*(K_B + \Lambda) + E - G
\]
with $P, \ N$ as in Definition \ref{ZD}.
We will 
show that $K_B + \Lambda$ birationally admits a Fujita-Zariski decomposition. 

  We have $G$ is a $\nu$-exceptional effective divisor, since $X_1 \rightarrow B_1$ is equidimensional, as it is a flat morphism over a smooth base, and  $\codim (\pi(G)) \geq 2$.
   Furthermore $K_{X} = \nu^*(K_{X_0}) + F$, with $F$ an effective $\nu$-exceptional divisor, since $X$ and $X_0$ are smooth. Then  $F$ is numerically fixed by $K_X$ and $F + G$ is numerically fixed by $\nu^*(K_{X_0}) + F + G = K_X + G$, \cite[Prop. 1.10]{Fujita86}. Since $F$ is numerically fixed by $K_X$ and $K_X = P + N$ is a Fujita-Zariski decomposition then $K_X - F = \nu^*(K_{X_0})$ has a Fujita-Zariski decomposition by Lemma \ref{NumFix}.  Similarly, since $F + G$ is numerically fixed by $\nu^*(K_{X_0}) + F + G = K_X + G$,
   thus $K_X + G$ admits a Fujita-Zariski decomposition. In both cases $P$ is the nef part of the decomposition. It follows
   that
\[
\pi^*(K_B + \Lambda) + E = P + N + G
\]
is a Fujita-Zariski decomposition. 
Since $E$ is also numerically fixed by $\pi^*(K_B + \Lambda) + E$ (Theorem \ref{general} and \cite[Prop 1.10]{Fujita86}); then $\pi^*(K_B + \Lambda)$ also admits a Fujita-Zariski decomposition (Lemma \ref{NumFix}). Then $K_B + \Lambda$ birationally also admits a Fujita-Zariski decomposition by  Proposition \ref{ExToZariski}.
\smallskip

Assume now that $K_B + \Lambda$ birationally admits a Fujita-Zariski decomposition. Without loss of generality we assume that $K_B+ \Lambda = P_\Lambda + N_\Lambda$ is a Fujita-Zariski decomposition. 
We have $\pi^*(K_B + \Lambda) = \pi^*(P_\Lambda) + h^*(N_\Lambda)$ is then a  Fujita-Zariski decomposition (Proposition \ref{ExToZariski}),  with $\pi^*(P_\Lambda)$ the nef portion of the decomposition. The canonical bundle formula 
$K_X = \pi^*(K_B + \Lambda) + E - G$
and \cite[Prop 1.10]{Fujita86} imply that $E$ is numerically fixed by $\pi^*(K_B + \Lambda) + E$. We have then a Fujita-Zariski decomposition for $K_X + G =\pi^*(K_B + \Lambda) + E = \pi^*(P_\Lambda) + \pi^*(N_\Lambda) + E$ with nef part $\pi^*(P_\Lambda)$. Similarly, with Lemma \ref{NumFix} applied to $G$,  we deduce that $K_X$ admits a Fujita-Zariski decomposition of the form 
\[
K_X=\pi^*(P_\Lambda) + \pi^*(N_\Lambda) + E - G. 
\]
Here $\pi^*(N) + E-G$ is effective and $\pi^*(P_\Lambda)$ is nef.

\end{proof}

\begin{theorem}\label{NewFZ}
Let   $\pi_0:X_0 \rightarrow B_0$ be an  elliptic fibration, $\dim X_0=n$ and $\kappa(X_0) \geq 0$. 
Assume the existence of minimal models for $klt$ pairs {of non negative Kodaira dimension} in dimension $n-1$. There exist  birationally equivalent fibrations and birational morphisms 	$\phi_{\tilde B}$ and $\phi_B$
{\small\[
\begin{tikzcd}
X_0 \arrow[dd, "\pi_0"']&  & X \arrow[d,"\tilde{\pi}"] \arrow[ll] \arrow[ldd, "\pi"'] \arrow[rdd, "\epsilon"]&\\
&&\tilde{B} \arrow[ld, "\phi_B"'']\arrow[rd,"{\phi_{\bar B}}"']&\\
B_0 &  (B, \Lambda) \arrow[l]\arrow[rr,dashed] && (\bar{B},\bar{\Lambda})\\
\end{tikzcd}
\]
}
\noindent such that $
K_{X} = \epsilon^*(K_{\bar{B}} + \bar{\Lambda}) + \tilde{\pi}^*\Gamma + E - G $ is a Fujita-Zariski decomposition of  
$K_X= \pi^*(K_B+\Lambda) + E-G$

where 
\begin{itemize}
	\item    $(\bar{B},\bar{\Lambda})$ is a log minimal model of the klt pair $(B,\Lambda)$ 
	\item $\Gamma$ is an $\phi_{\tilde B}$-exceptional effective $\mathbb{Q}$-divisor.
	\item $P = \epsilon^*(K_{\bar{B}} + \bar{\Lambda})$  is the nef part and   $N = \tilde{\pi}^*\Gamma + E - G$ the effective part of the Fujta-Zariski decomposition.
	
\end{itemize}
\end{theorem}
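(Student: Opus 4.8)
The plan is to read off a Fujita--Zariski decomposition of $K_X$ from one of $K_B+\Lambda$: I would produce an explicit decomposition of $K_B+\Lambda$ on a resolution of the base coming from a log minimal model, pull it up through the canonical bundle formula of Theorem \ref{general}, and then pin down its nef and negative parts with the numerically-fixed divisor calculus of the proof of Theorem \ref{EllFZD}.

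First I would fix the standard model: by Lemma \ref{HironakaFlat} and Theorem \ref{general} I may assume $\pi:X\to B$ satisfies $K_X = \pi^*(K_B+\Lambda)+E-G$ with $(B,\Lambda)$ klt of dimension $n-1$. Proposition \ref{kodEff} gives $\kappa(K_B+\Lambda)=\kappa(X)=\kappa(X_0)\geq 0$, so $(B,\Lambda)$ is a klt pair of non-negative Kodaira dimension in dimension $n-1$, and the running hypothesis supplies a log minimal model $(\bar B,\bar\Lambda)$ with its $(K_B+\Lambda)$-negative contraction $\psi:B\dashrightarrow\bar B$. Resolving $\psi$ by a common smooth model $\tilde B$ with morphisms $\phi_B:\tilde B\to B$ and $\phi_{\bar B}:\tilde B\to\bar B$, the defining inequality of a negative contraction yields $\phi_B^*(K_B+\Lambda)=\phi_{\bar B}^*(K_{\bar B}+\bar\Lambda)+\Gamma$ with $\Gamma$ effective and $\phi_{\bar B}$-exceptional, while $K_{\bar B}+\bar\Lambda$ is nef; by Remark \ref{FZAB}(2) and \cite[Thm.~1.5]{Birkar1} this is the Fujita--Zariski decomposition of $\phi_B^*(K_B+\Lambda)$, with nef part $\phi_{\bar B}^*(K_{\bar B}+\bar\Lambda)$.

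Next I would transport this up the fibration. Replacing $X$ by a smooth model dominating the fiber product $X\times_B\tilde B$, I obtain $\tilde\pi:X\to\tilde B$ with $\pi=\phi_B\circ\tilde\pi$ and $\epsilon=\phi_{\bar B}\circ\tilde\pi$; substituting $\pi^*=\tilde\pi^*\circ\phi_B^*$ into the canonical bundle formula produces the candidate
\[
K_X=\epsilon^*(K_{\bar B}+\bar\Lambda)+\tilde\pi^*\Gamma+E-G,
\]
in which $P:=\epsilon^*(K_{\bar B}+\bar\Lambda)$ is nef since $K_{\bar B}+\bar\Lambda$ is. It then remains only to verify that $P$ is the nef part of the Fujita--Zariski decomposition of $K_X$; the negative part is thereafter forced by uniqueness (Remark \ref{FZAB}(3)) to be $N=\tilde\pi^*\Gamma+E-G$, which is automatically effective as the negative part of a Weak Zariski decomposition.

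For that verification I would chain the transfers of Proposition \ref{NumFix}. By Theorem \ref{general} and \cite[Prop.~1.10]{Fujita86}, $E$ is numerically fixed by $\pi^*(K_B+\Lambda)+E=K_X+G$; writing $K_X=\nu^*(K_{X_0})+F$ with $F$ effective and $\nu$-exceptional (both $X$ and $X_0$ smooth), $F$ is numerically fixed by $K_X$ and $F+G$ by $K_X+G$. Proposition \ref{NumFix}(2) then forces $K_X$, $\nu^*(K_{X_0})$, $K_X+G$ and $\pi^*(K_B+\Lambda)$ to admit Fujita--Zariski decompositions with one and the same nef part. Finally, since $\Gamma$ is $\phi_{\bar B}$-exceptional, $\epsilon(\tilde\pi^*\Gamma)=\phi_{\bar B}(\Gamma)$ has codimension $\geq 2$ in $\bar B$, so Proposition \ref{ExToZariski} applied to $\epsilon:X\to\bar B$ with the (nef, hence trivially decomposed) divisor $K_{\bar B}+\bar\Lambda$ identifies that common nef part as $\epsilon^*(K_{\bar B}+\bar\Lambda)$, completing the proof. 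The main obstacle is precisely this identification: one must check that the numerical fixedness of $\Gamma$ on $\tilde B$ and of $E$ along the elliptic fibration are compatible, so that threading Propositions \ref{NumFix} and \ref{ExToZariski} through the chain preserves the single nef class $\epsilon^*(K_{\bar B}+\bar\Lambda)$ throughout while the codimension-$\geq 2$ divisor $G$ is correctly absorbed into $N$.
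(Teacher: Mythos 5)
Your proposal is correct and follows essentially the same route as the paper: obtain the log minimal model $(\bar B,\bar\Lambda)$ of the klt base pair (using Proposition \ref{kodEff} to see $\kappa(K_B+\Lambda)\geq 0$), write $\phi_B^*(K_B+\Lambda)=\phi_{\bar B}^*(K_{\bar B}+\bar\Lambda)+\Gamma$ on a common resolution via the Negativity Lemma and \cite[Thm.~1.5]{Birkar1}, and transport this Fujita--Zariski decomposition up through the canonical bundle formula using the numerically-fixed divisor calculus (Propositions \ref{NumFix} and \ref{ExToZariski}) exactly as in the proof of Theorem \ref{EllFZD}, which the paper invokes at the corresponding step. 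Your explicit unwinding of the chain of numerically fixed divisors $F$, $F+G$, $E$ is precisely what the paper's reference to ``the arguments of Theorem \ref{EllFZD}'' encapsulates.
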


\begin{proof}

As in  Theorem \ref{EllFZD} we have the  birationally equivalent fibrations:
\[
\begin{tikzcd}
X_0 \arrow[d, "\pi_0"']& X_1 \arrow[l]\arrow[d]& X \arrow[l]\arrow[d, "\pi"]\\
B_0 & B_1\arrow[l]& B \arrow[l]\\
\end{tikzcd}
\]
 and  $K_X = \pi^*(K_B + \Lambda) + E - G$,
where $(B,\Lambda)$ is a klt pair of dimension $n - 1$.
By the hypotheses $ 0 \leq \kappa(X) = \kappa(B, K_B + \Lambda) $  (Proposition \ref{kodEff}) and existence of minimal models for klt pairs of dimension $n-1$, $(B, \Lambda)$ has log minimal model  $(\bar{B}, \bar{\Lambda})$.
 Let $\tilde{B}$ be a common log resolution of $(B,\Lambda)$ and $(\bar{B}, \bar{\Lambda})$ and $\tilde{X}$ be a resolution of $X \times_B \tilde{B}$. 
 As in Theorem \ref{EllFZD}
  we can assume  without loss of generalities $\tilde{X}=X$. We have the following commutative diagram:
{\small {\[
\begin{tikzcd}
X_0 \arrow[dd, "\pi_0"']&  & X \arrow[d,"\tilde{\pi}"] \arrow[ll] \arrow[ldd, "\pi"'] \arrow[rdd, "\epsilon"]&\\
&&\tilde{B} \arrow[ld, "\phi_B"'']\arrow[rd,"{\phi_{\bar B}}"']&\\
B_0 &  (B, \Lambda) \arrow[l]\arrow[rr,dashed] && (\bar{B},\bar{\Lambda})\\
\end{tikzcd}
\]
}}
 By the Negativity Lemma, \cite[Lemma 3.39]{KollarMori},  we have $\phi_B^*(K_B + \Lambda) = \phi_{\bar B}^*(K_{\bar{B}} + \bar{\Lambda}) + \Gamma$ with $\Gamma$ effective and $\phi_{\bar B}$-exceptional. From the arguments of \cite[Thm. 1.5]{Birkar1}, $\phi_{ B}^*(K_B + \Lambda) = \phi_{\bar B}^*(K_{\bar{B}} + \bar{\Lambda}) + \Gamma$ is a Fujita-Zariski decomposition of $\phi_{ B}^*(K_B + \Lambda)$ with $\phi_{\bar B}^*(K_{\bar{B}} + \bar{\Lambda})= P_\Lambda$  the nef part and $\Gamma=N_\Lambda$.  Then $K_B + \Lambda$ birationally admits a Fujita-Zariski decomposition and so by the arguments of Theorem \ref{EllFZD}  
we have that:

\begin{align*}
K_X &= \tilde{\pi}^*h^*(K_{\bar{B}} + \bar{\Lambda}) + \tilde{\pi}^*(\Gamma) + E - G\\
&=\epsilon^*(K_{\bar{B}} + \bar{\Lambda}) + \tilde{\pi}^*(\Gamma) + E - G.
\end{align*}

\end{proof}

\begin{corollary}\label{CKM} Under the assumption of the hypothesis and notation of Theorem \ref{NewFZ}, the canonical model of $X$ is isomorphic to the log canonical model of $(B,\Lambda)$.
\end{corollary}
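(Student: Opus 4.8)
The plan is to identify the two models by proving that their defining graded section rings are isomorphic and then applying $\mathrm{Proj}$. Recall that the canonical model of $X$ is $\mathrm{Proj}\bigoplus_{m\ge 0}H^0(X,mK_X)$ and the log canonical model of $(B,\Lambda)$ is $\mathrm{Proj}\bigoplus_{m\ge 0}H^0(B,m(K_B+\Lambda))$; hence it suffices to construct an isomorphism $R(X,K_X)\cong R(B,K_B+\Lambda)$ of graded $\mathbb{C}$-algebras, which I will assemble in three steps, working throughout with $m$ ranging over a divisible subsemigroup so that all the $\mathbb{Q}$-divisors below become Cartier.

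First I would exploit the Fujita-Zariski decomposition $K_X=P+N$ produced in Theorem \ref{NewFZ}, where $P=\epsilon^*(K_{\bar B}+\bar\Lambda)$ is nef and $N=\tilde\pi^*\Gamma+E-G$ is effective. By Remark \ref{ZariskiRelation}(2) this is also a CKM-Zariski decomposition over $\mathrm{Spec}\,\mathbb{C}$, so multiplication by the canonical section of $mN$ induces an isomorphism $H^0(X,mP)\xrightarrow{\ \sim\ }H^0(X,mK_X)$ for every admissible $m$; since these maps are compatible with products, they assemble into a graded-ring isomorphism $R(X,P)\cong R(X,K_X)$. Thus the canonical ring of $X$ is identified with $R(X,\epsilon^*(K_{\bar B}+\bar\Lambda))$.

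Next I would push forward along $\epsilon=\phi_{\bar B}\circ\tilde\pi$. As $\tilde\pi$ is an elliptic fibration (connected genus-one general fiber) and $\phi_{\bar B}$ is a birational morphism onto the normal variety $\bar B$, one has $\epsilon_*\mathcal{O}_X=\mathcal{O}_{\bar B}$, so the projection formula yields $H^0\!\big(X,m\epsilon^*(K_{\bar B}+\bar\Lambda)\big)=H^0\!\big(\bar B,m(K_{\bar B}+\bar\Lambda)\big)$, identifying $R(X,P)$ with the log canonical ring of $(\bar B,\bar\Lambda)$. Finally, since $(\bar B,\bar\Lambda)$ is a log minimal model of $(B,\Lambda)$, the relation $\phi_B^*(K_B+\Lambda)=\phi_{\bar B}^*(K_{\bar B}+\bar\Lambda)+\Gamma$ with $\Gamma\ge 0$ and $\phi_{\bar B}$-exceptional (established in the proof of Theorem \ref{NewFZ}) gives $H^0(\bar B,m(K_{\bar B}+\bar\Lambda))=H^0(B,m(K_B+\Lambda))$, using that a $\phi_{\bar B}$-exceptional effective divisor contributes no new sections and that $\phi_{B,*}\mathcal{O}_{\tilde B}=\mathcal{O}_B$. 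Composing the three identifications yields the desired graded-ring isomorphism, and $\mathrm{Proj}$ finishes the proof.

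The hard part will be the bookkeeping that upgrades each step from an equality of dimensions (or of Iitaka dimensions, as in Proposition \ref{kodEff}) to an honest isomorphism of graded rings: I must fix a single divisible subsemigroup of exponents on which $mP$, $m(K_{\bar B}+\bar\Lambda)$ and $m\Gamma$ are all integral, verify that the three isomorphisms are induced by maps compatible with multiplication, and confirm that passing to the corresponding Veronese subring does not change the resulting $\mathrm{Proj}$. A secondary point to check is $\epsilon_*\mathcal{O}_X=\mathcal{O}_{\bar B}$ when $\bar B$ carries only klt singularities, which follows from normality of $\bar B$ together with connectedness of the fibers of $\epsilon$.
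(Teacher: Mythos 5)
Your proposal is correct and follows essentially the same route as the paper: the paper's (much terser) proof likewise invokes the fact that a Fujita--Zariski decomposition is a CKM--Zariski decomposition to identify $R(X,K_X)$ with $R(X,P)$, uses that $P$ is the pullback of the nef part of the decomposition of $K_B+\Lambda$ to descend to the base, and concludes that the canonical rings agree up to a change in grading. Your write-up simply makes explicit the projection-formula step, the removal of the exceptional divisor $\Gamma$, and the Veronese/grading bookkeeping that the paper leaves implicit.
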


\begin{proof}
A Fujita-Zariski decomposition is a CKM-Zariski decomposition (\ref{ZariskiRelation}).
In Theorems \ref{EllFZD} and \ref{NewFZ}, we showed that $P= \pi^*(P_\Lambda)$.  Then, up to a change in grading, the canonical rings of $X$ and $(B,\Lambda)$ are isomorphic and the canonical models are isomorphic.
\end{proof}

\medskip

\subsection{The Zariski Decompositions and  Minimal Models for Elliptic Fibrations}~
\smallskip

We now use our results of Zariski decomposition and elliptic fibrations (Theorems \ref{NewFZ}  and \ref{RelMM}) to give a different proof of part (2) in Theorem \ref{MM}. Note that the statement is stronger. In particular, $\bar{B}$ is $\mathbb{Q}$-factorial.


\begin{theorem}
\label{NewMain}
 Let  $\pi_0:X_0 \rightarrow B_0$ be elliptic fibration, with  $\dim (X)= n$ and $\kappa(X) \geq 0$. 

Assume one of the following:
\begin{enumerate}
\item Log minimal models for $klt$ pairs {of non negative Kodaira dimension} in dimension $n-1$ exist.
\item Any sequence of flips for generalized klt pairs of dimension at most $n-2$ terminates and 
$K_B+ \Lambda$ admits a weak Zariski decomposition.
\end{enumerate}

There exists a birationally equivalent fibration $\bar{\pi}: \bar{X} \rightarrow \bar{B}$ such that 
\begin{itemize}
	\item $\bar{B}$ is normal and $\mathbb{Q}$-factorial.
	\item There exists a effective divisor $\bar{\Lambda}$ on $\bar{B}$ such that $(\bar{B}, \bar{\Lambda})$ is a klt pair.
	\item $\bar{X}$ has at worst terminal singularities.
	\item $K_{\bar{X}} \equiv_{\mathbb{Q}} \bar{\pi}^*(K_{\bar{B}} + \bar{\Lambda})$
	\item $K_{\bar{X}}$ is nef
\end{itemize}
\end{theorem}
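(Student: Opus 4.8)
The plan is to reduce the statement, via the canonical bundle formula and the birational Zariski machinery already developed, to a minimal model problem on the $(n-1)$-dimensional base pair $(B,\Lambda)$, and then lift the resulting minimal model of the base back up to $\bar X$ using the relative theory of Theorem \ref{RelMM}. First I would invoke Lemma \ref{HironakaFlat} and Theorem \ref{general} to pass to the birationally equivalent model $\pi : X \to B$ satisfying $K_X \equiv_{\Q} \pi^*(K_B + \Lambda) + E - G$, with $(B,\Lambda)$ klt of dimension $n-1$. By Proposition \ref{kodEff} the hypothesis $\kappa(X) \geq 0$ gives $\kappa(B, K_B + \Lambda) = \kappa(X) \geq 0$, so $K_B + \Lambda$ is pseudo-effective. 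Under assumption (1), the existence of log minimal models for klt pairs of non-negative Kodaira dimension in dimension $n-1$ produces a log minimal model $(\bar B, \bar\Lambda)$ of $(B,\Lambda)$; under assumption (2), the termination of flips for generalized klt pairs in dimension $\leq n-2$ together with a weak Zariski decomposition for $K_B + \Lambda$ yields the same conclusion by the results cited in Remark \ref{FZAB}(5) (\cite{Birkar1, HaconMoraga, HanLi}). In either case I run the $(K_B+\Lambda)$-MMP to obtain $(\bar B,\bar\Lambda)$ with $K_{\bar B} + \bar\Lambda$ nef, $(\bar B,\bar\Lambda)$ $\Q$-factorial klt, furnishing the first two bullet points.

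Next I would transport this to the total space. By Theorem \ref{NewFZ} we already have the Fujita-Zariski decomposition $K_X = \epsilon^*(K_{\bar B} + \bar\Lambda) + \tilde\pi^*\Gamma + E - G$ with nef part $P = \epsilon^*(K_{\bar B} + \bar\Lambda)$ and effective part $N = \tilde\pi^*\Gamma + E - G$. The point is that the fibration $\epsilon : X \to \bar B$ (or equivalently a relative model of it over $\bar B$) now has a base pair $(\bar B,\bar\Lambda)$ for which $K_{\bar B} + \bar\Lambda$ is nef, and the fibrewise discrepancy data $E - G$ together with $\tilde\pi^*\Gamma$ is $\Q$-effective and either $\epsilon$-exceptional or supported over the discriminant. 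I would then apply Theorem \ref{RelMM} (and Proposition \ref{Cor1.4}) to the fibration over $\bar B$: the relative good minimal model $\bar\pi : \bar X \to \bar B$ exists, $\bar X$ has $\Q$-factorial terminal singularities, and the relative contraction removes exactly the effective part $N$, yielding $K_{\bar X} \equiv_{\Q} \bar\pi^*(K_{\bar B} + \bar\Lambda)$. This delivers the terminal-singularities and the numerical-equivalence bullet points.

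Finally, nefness of $K_{\bar X}$ follows by pullback: since $K_{\bar X} \equiv_{\Q} \bar\pi^*(K_{\bar B} + \bar\Lambda)$ and $K_{\bar B} + \bar\Lambda$ is nef on $\bar B$, and $\bar\pi$ is a morphism, the pullback of a nef divisor is nef, so $K_{\bar X}$ is nef. This closes the last bullet and shows $\bar X$ is a minimal model.

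I expect the main obstacle to be the compatibility of the relative minimal model step with the already-chosen base model $(\bar B,\bar\Lambda)$: one must ensure that running the relative MMP of Theorem \ref{RelMM} over the \emph{nef} base $\bar B$ genuinely contracts the effective part $N$ of the Fujita-Zariski decomposition and does not disturb $\bar B$ or reintroduce a non-nef contribution along the discriminant. Concretely, the delicate point is checking that the divisor $\tilde\pi^*\Gamma + E - G$ behaves as a relative effective $F$ satisfying the hypotheses of Proposition \ref{Cor1.4} (no component of the form $\pi^*(\Gamma_j)$), so that the relative contraction is $K$-negative and terminates; the $\Q$-factoriality of $\bar B$ obtained from the base MMP is what makes this lift clean, which is precisely the improvement over Theorem \ref{MM}.
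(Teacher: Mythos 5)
Your plan follows the paper's proof of Theorem \ref{NewMain} essentially step for step: produce the log minimal model $(\bar B,\bar\Lambda)$ of the base pair from hypothesis (1) or (2) (citing \cite{HaconMoraga} for the latter), import the Fujita--Zariski decomposition $K_X=\epsilon^*(K_{\bar B}+\bar\Lambda)+\tilde\pi^*\Gamma+E-G$ from Theorem \ref{NewFZ}, run the relative MMP of Theorem \ref{RelMM} over $\bar B$ to kill the effective part, and get nefness of $K_{\bar X}$ by pulling back the nef divisor $K_{\bar B}+\bar\Lambda$. So the strategy is the right one and matches the paper's.

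However, the step you defer as ``the main obstacle'' --- verifying that no irreducible component of $\tilde\pi^*\Gamma+E-G$ is of the form $\epsilon^*(\Gamma_j)$ for a divisor $\Gamma_j$ on $\bar B$, which is exactly the hypothesis needed to invoke Theorem \ref{RelMM} --- is the one place where the paper's proof does real work, and your proposal does not supply the argument. Saying that $E-G$ is ``either $\epsilon$-exceptional or supported over the discriminant'' does not rule out that some component of $E$ is the full preimage of a divisor of $\bar B$: being supported over the discriminant is compatible with being a pullback. The paper's argument is as follows. For $\tilde\pi^*\Gamma$ this is easy since $\Gamma$ is $\phi_{\bar B}$-exceptional. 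For a component $D$ of $E$ with $\epsilon(D)$ of codimension one in $\bar B$, one pushes $D$ down to $\tilde B$ and observes that $\tilde\pi_*(D)$ cannot be contracted by $\phi_B$ (otherwise, because $(\bar B,\bar\Lambda)$ is a log minimal model of $(B,\Lambda)$, it would also be contracted by $\phi_{\bar B}$, contradicting that $\epsilon(D)$ is a divisor); hence $D$ dominates a divisor of $B$, and by the structure of $E$ in Theorem \ref{general}(6) ($E$ restricted to the preimage of a general curve is a union of proper transforms of exceptional curves) $D$ does not contain the fiber over a general point of its image, so it is not a pullback from $\tilde B$, a fortiori not from $\bar B$. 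You should add this argument (or an equivalent one) to close the gap; with it, your proof is complete and coincides with the paper's.
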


\begin{proof}
Assumption 
(2) ensures the existence of a minimal model for $(B,\Lambda)$
 \cite[Thm. 1]{HaconMoraga}. 
Let $(\bar{B},\bar{\Lambda})$ be a minimal model as in  Theorem \ref{NewFZ}. We have the following diagram:
{\small\[
\begin{tikzcd}
X_0 \arrow[dd, "\pi_0"']&  & X \arrow[d,"\tilde{\pi}"] \arrow[ll] \arrow[ldd, "\pi"'] \arrow[rdd, "\epsilon"]&\\
&&\tilde{B} \arrow[ld, "\phi_B"'']\arrow[rd,"{\phi_{\bar B}}"']&\\
B_0 &  (B, \Lambda) \arrow[l]\arrow[rr,dashed] && (\bar{B},\bar{\Lambda})\\
\end{tikzcd}
\]
}
and the following Fujita-Zariski decomposition of the canonical divisor of $K_X$:
\[
K_{X} = \epsilon^*(K_{\bar{B}} + \bar{\Lambda}) + \tilde{\pi}^*\Gamma + E - G, \\ \epsilon^*(K_{\bar{B}} + \bar{\Lambda}) \text{ nef and  }  \tilde{\pi}^*\Gamma + E - G \ \text{effective.}
\]

We apply Theorem \ref{RelMM} to the relative MMP with respect to $\epsilon: X \rightarrow \bar{B}$:

\[
\begin{tikzcd}
& X \arrow[d,"\tilde{\pi}"] \arrow[ldd, "\pi"'] \arrow[rdd, "\epsilon"] \arrow[r, dashed]& \bar{X} \arrow[dd, "\bar{\pi}"]\\
&\tilde{B} \arrow[ld, "\phi_B"'']\arrow[rd,"\phi_{\bar B}"']&\\
(B, \Lambda) \arrow[rr,dashed] && (\bar{B},\bar{\Lambda})\\
\end{tikzcd}
\]

To apply Theorem \ref{RelMM}
we want to show that no component of the effective divisor $\tilde{\pi}^*\Gamma + E - G$ is a pullback of some $\mathbb{Q}$-divisor on $\bar{B}$. 

It is sufficient to show that no component of $\tilde{\pi}^*\Gamma$ and $E$ contains the pullback of a divisor on $\bar{B}$, since they contain all the components of $\tilde{\pi}^*\Gamma + E - G$. We have that $\Gamma$ is contracted by $\phi_B$ 
thus   $\tilde{\pi}^*\Gamma$ cannot contain the pullback of a divisor on $\bar{B}$. The components of $E$ can map down to a space of codimension $1$ or to a space of codimension $\geq 2$ on $\bar{B}$.

 We then need to show that when $\epsilon_*(D)$ has  codimension one  in $\bar B$, then $D$ does not contain the fiber over the points in its image on $\bar{B}$.

Assuming that to be the case, $\tilde \pi_* ({D})$ will be an effective divisor. 
 Furthermore, $\tilde \pi_* ({D})$ cannot be contracted by $\phi_B:\tilde{B} \rightarrow B$, because then it would mean
$D$ would map  to a space of codimension $\geq 2$ on $B$ and since $(\bar{B},\bar{\Lambda})$ is a log minimal model of $(B,\Lambda)$,  $\tilde \pi_*({D})$ would also be contracted by $\phi_{\tilde B}$.  Since $\tilde \pi_*({D})$ is not contracted by $\phi$, then 
$D$ is exceptional, in the sense of Theorem \ref{general}; in particular
 $D$ does not contain preimage of general points on its image in $\tilde B$ and  $D$ is not a pullback of a divisor on $\tilde{B}$ and a fortiori of $\bar{B}$ also. 

By Theorem \ref{RelMM}, we will have $K_{\bar{X}} \equiv_\mathbb{Q} \bar{\pi}^*(K_{\bar{B}} + \bar{\Lambda})$ and $K_{\bar{X}}$ is nef since it is numerically the pullback of a log canonical divisor of a log minimal model. $\bar{X}$ has at worst terminal singularities since it is obtained from running a relative MMP on a smooth variety. 
\end{proof}

\section{Applications}
\subsection{Existence of Zariski decompositions and minimal models}\label{Existence}
\begin{corollary}\label{FZD}
Assume the existence of minimal models for klt pairs in dimension $n-1$ with non-negative Kodaria dimension. Given an elliptic $n$-fold, $\pi: X \rightarrow B$, then we have that $K_X$ birationally admits a Fujita-Zariski decomposition.
\end{corollary}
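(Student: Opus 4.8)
The plan is to reduce the statement to a question on the base $B$ and then invoke the hypothesis on minimal models. By Lemma \ref{HironakaFlat} and Theorem \ref{general} we may replace $\pi: X \to B$ by a birationally equivalent fibration satisfying the canonical bundle formula $K_X = \pi^*(K_B + \Lambda) + E - G$, with $(B,\Lambda)$ a klt pair of dimension $n-1$. By Proposition \ref{kodEff} we have $\kappa(K_B + \Lambda) = \kappa(X)$, so that $(B,\Lambda)$ is a klt pair of the same Kodaira dimension as $X$. This is the reduction that makes the hypothesis applicable to the base.

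I would then treat the case $\kappa(X) \geq 0$, which is where the hypothesis does its work. Since $(B, \Lambda)$ is a klt pair of non-negative Kodaira dimension in dimension $n-1$, the assumed existence of minimal models produces a log minimal model $(\bar{B}, \bar{\Lambda})$ of $(B, \Lambda)$. By Remark \ref{FZAB} (following the argument of \cite[Thm. 1.5]{Birkar1}), the existence of this log minimal model implies that $K_B + \Lambda$ birationally admits a Fujita-Zariski decomposition. Applying Theorem \ref{EllFZD} then transfers the decomposition from the base to the total space, so that $K_X$ birationally admits a Fujita-Zariski decomposition. Equivalently, one may read the decomposition off directly from Theorem \ref{NewFZ}, namely $K_X = \epsilon^*(K_{\bar{B}} + \bar{\Lambda}) + \tilde{\pi}^*\Gamma + E - G$ with nef part $\epsilon^*(K_{\bar{B}} + \bar{\Lambda})$; this presents the corollary as an immediate consequence of that theorem, and is the route I would take.

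The subtlety to watch, rather than a genuine obstacle, is the hypothesis on $\kappa(X)$. A Fujita-Zariski decomposition is in particular a Weak Zariski decomposition $P + N$ with $P$ nef and $N$ effective, which forces $K_X$ to be pseudo-effective. Thus the statement is to be understood under $\kappa(X) \geq 0$ (equivalently $\kappa(K_B + \Lambda) \geq 0$, by Proposition \ref{kodEff}), consistent with the pseudo-effective branch of the main theorem; when $K_Y + \Lambda_T$ fails to be pseudo-effective the fibration is instead birationally a Mori fiber space (Theorem \ref{MM}), and no such decomposition can exist. I expect no difficulty beyond correctly chaining Theorem \ref{NewFZ} and Theorem \ref{EllFZD}, since the substantive work has already been carried out in those results and the corollary merely packages it.
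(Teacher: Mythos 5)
Your proposal is correct and follows exactly the route the paper intends: the paper gives no separate proof of Corollary \ref{FZD}, since it is read off directly from the Fujita--Zariski decomposition $K_X = \epsilon^*(K_{\bar B}+\bar\Lambda) + \tilde\pi^*\Gamma + E - G$ constructed in Theorem \ref{NewFZ}, which is itself proved by passing to a log minimal model of $(B,\Lambda)$ and transferring the decomposition via Theorem \ref{EllFZD}. Your remark that the statement is implicitly restricted to $\kappa(X)\geq 0$ (consistent with the pseudo-effective branch of the main theorem in the introduction) is an accurate reading of the hypotheses.
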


\begin{corollary}
 Let  $\pi: Y \rightarrow T$ be  and elliptic fibration with  
 $\dim (Y) = n$ and $\kappa(Y) \geq 0$ .  If generalized $klt$ flips terminate in dimension up to $n-1$, then any minimal model program for $Y$ terminates.
\end{corollary}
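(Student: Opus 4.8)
The plan is to reduce termination of an arbitrary $K_Y$-MMP on the $n$-fold to termination of the generalized klt minimal model program on the $(n-1)$-dimensional base, using the canonical bundle formula as the bridge. First I would replace $\pi$ by a birationally equivalent fibration $\pi: X \to B$ as in Lemma \ref{HironakaFlat} and Theorem \ref{NewFZ}, so that
\[
K_X \equiv_\Q \pi^*(K_B + \Lambda) + E - G,
\]
where $(B, \Lambda)$ is a klt pair of dimension $n-1$, and where the moduli summand $\tfrac{1}{12} J^*\co_{\P^1}(1)$ of $\Lambda$ (Theorem \ref{general}) equips $B$ with the structure of a \emph{generalized} klt pair once one runs the MMP, the nef part being carried along as a b-divisor. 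By Proposition \ref{kodEff} one has $\kappa(K_B + \Lambda) = \kappa(X) \geq 0$, so the base pair is an admissible input for a generalized MMP of non-negative Kodaira dimension.

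Next I would split any $K_X$-MMP into vertical and horizontal steps with respect to $\pi$. A step is vertical when its exceptional or flipping locus lies in the fibers over a proper closed subset of $B$; these are precisely the steps of the relative MMP of $X$ over $B$, which terminates by Theorem \ref{RelMM} (the relative good minimal model $X_r$ exists and satisfies $K_{X_r} \equiv_\Q \pi_r^*(K_B + \Lambda)$), using that the general fiber is one-dimensional. A step is horizontal when it genuinely modifies $B$; here I would use the canonical bundle formula to show that a $K_X$-negative horizontal extremal contraction or flip descends to a $(K_B + \Lambda)$-negative extremal step on the generalized pair $(B, \Lambda)$, with the moduli b-divisor transforming functorially. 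Since by hypothesis generalized klt flips terminate in dimension $n-1 = \dim B$, the induced sequence of horizontal steps downstairs is finite.

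Finally I would combine the two: the base MMP terminates after finitely many horizontal steps, and between consecutive horizontal steps only finitely many vertical steps can occur by relative termination; hence no infinite sequence of $K_X$-MMP steps exists, and any MMP for $Y$ terminates.

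The hard part will be the descent step, namely proving that a $K_X$-negative horizontal extremal ray is carried to a genuinely $(K_B + \Lambda)$-negative extremal ray on the generalized base pair, with the correct transformation of the nef moduli part, while simultaneously ruling out an infinite interleaving of vertical steps between consecutive horizontal ones. The effective vertical and exceptional contribution $E - G$ must be controlled so that it does not interfere with the sign of the intersection numbers downstairs; this is exactly where the numerically-fixed structure of Theorem \ref{NewFZ} is used. Once this correspondence is established, the remaining ingredients, namely the Kodaira dimension identity of Proposition \ref{kodEff}, relative termination from Theorem \ref{RelMM}, and the assumed termination of generalized klt flips in dimension $n-1$, combine immediately to yield global termination.
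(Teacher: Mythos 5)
Your proposal takes a genuinely different route from the paper, but as written it has a real gap, and in fact the gap sits exactly where you defer the work. The paper's proof is two lines: Theorem \ref{NewFZ} (whose hypothesis is supplied by the assumed termination of generalized klt flips in dimension $\leq n-1$, which yields a log minimal model of the klt pair $(B,\Lambda)$) produces a Fujita--Zariski, hence weak Zariski, decomposition of $K_X$, and then the termination criterion of Hacon--Moraga \cite[Thm.~1]{HaconMoraga} --- a weak Zariski decomposition in dimension $n$ plus termination of generalized klt flips in lower dimension implies that any MMP terminates --- finishes the argument on $X$ itself. No step of the MMP on $X$ is ever compared with an MMP on $B$.

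Your vertical/horizontal dichotomy, by contrast, requires two claims that are not established anywhere in the paper and that you explicitly label ``the hard part'' without resolving: (a) that a $K_X$-negative horizontal extremal contraction or flip descends to a $(K_{B}+\Lambda)$-negative step of a generalized MMP on the base, with the moduli b-divisor transforming correctly; and (b) that only finitely many vertical steps can interleave between consecutive horizontal ones. Claim (a) is particularly problematic because an arbitrary $K_X$-MMP need not preserve the fibration: after a horizontal divisorial contraction or flip the resulting variety need not map to $B$ (or to any birational modification of $B$) at all, so the dichotomy is not even well-defined past the first such step. Moreover, for the vertical steps you appeal to Theorem \ref{RelMM}, which gives \emph{existence} of a relative good minimal model over $B$, not \emph{termination} of an arbitrary relative MMP; since the corollary asserts termination of \emph{any} MMP for $Y$, this substitution does not suffice. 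The correct fix is to abandon the descent strategy and instead note that the hypothesis feeds directly into Theorem \ref{NewFZ} to produce a weak Zariski decomposition of $K_X$, after which \cite[Thm.~1]{HaconMoraga} applies verbatim.
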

\begin{proof}
Theorem \ref{NewFZ} establishes a weak Zariski decomposition for $K_X$ and the results follow from \cite[Thm. 1]{HaconMoraga}.
\end{proof}

Since minimal model exist for $klt$ pairs of non-negative Kodaira dimension of dimension up to $4$ we have the following:
\begin{corollary}
\label{EllFib5}
An elliptically fibered variety of dimension $n \leq 5$  with non-negative Kodaira dimension has  a birationally equivalent fibration $\bar{\pi}:\bar{X} \rightarrow \bar{B}$ where $\bar{X}$ is a minimal model and $K_{\bar{X}} \equiv_{\mathbb{Q}} \bar{\pi}^*(K_{\bar{B}} + \bar{\Lambda})$.
\end{corollary}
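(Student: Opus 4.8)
The plan is to show that an elliptic fibration of dimension $n \leq 5$ automatically satisfies hypothesis (1) of Theorem \ref{NewMain}, and then to invoke that theorem directly. First I would pass to the birationally equivalent fibration $\pi : X \to B$ furnished by Lemma \ref{HironakaFlat}, so that $X$ and $B$ are smooth, $\pi$ satisfies the hypotheses of Theorem \ref{general}, and $K_X = \pi^*(K_B + \Lambda) + E - G$ with $(B,\Lambda)$ a klt pair. Since the general fiber is a genus-one curve, $\dim B = n - 1 \leq 4$.

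Next I would transfer the non-negativity of the Kodaira dimension from the total space to the base pair. Kodaira dimension is a birational invariant, so $\kappa(X) = \kappa(Y) \geq 0$ by assumption; by Proposition \ref{kodEff} we then have $\kappa(K_B + \Lambda) = \kappa(X) \geq 0$. Hence $(B,\Lambda)$ is a klt pair of non-negative Kodaira dimension of dimension at most $4$. This is precisely the setting in which minimal models are known to exist, namely the existence of minimal models for klt pairs of non-negative Kodaira dimension in dimension up to $4$. Consequently hypothesis (1) of Theorem \ref{NewMain}, the existence of log minimal models for klt pairs of non-negative Kodaira dimension in dimension $n - 1$, holds unconditionally once $n \leq 5$.

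With the hypothesis verified I would apply Theorem \ref{NewMain}. It produces a birationally equivalent fibration $\bar{\pi} : \bar{X} \to \bar{B}$ with $\bar{B}$ normal and $\mathbb{Q}$-factorial, $(\bar{B}, \bar{\Lambda})$ klt, $\bar{X}$ with at worst terminal singularities, $K_{\bar{X}}$ nef, and $K_{\bar{X}} \equiv_{\mathbb{Q}} \bar{\pi}^*(K_{\bar{B}} + \bar{\Lambda})$. Because $\bar{X}$ is obtained by running a relative MMP starting from a smooth variety it is $\mathbb{Q}$-factorial, so the combination of terminal singularities, $\mathbb{Q}$-factoriality, and nefness of $K_{\bar{X}}$ makes $\bar{X}$ a minimal model in the sense of Definition \ref{lmm}. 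This yields the stated conclusion; the compatible Fujita-Zariski decomposition of $K_Y$ then follows immediately from Corollary \ref{FZD}, equivalently from the explicit decomposition constructed in Theorem \ref{NewFZ}.

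Since essentially all of the geometric content has already been packaged into Theorem \ref{NewMain}, the only substantive ingredient is external: the existence of minimal models for klt pairs of non-negative Kodaira dimension in dimension $4$. That is the single point at which the restriction $n \leq 5$ is essential, and I expect it to be the step carrying the real weight of the argument, the remainder being a bookkeeping reduction to the base pair via Lemma \ref{HironakaFlat} and Proposition \ref{kodEff}.
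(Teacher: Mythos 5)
Your proposal is correct and follows exactly the route the paper intends: the paper's (essentially one-line) justification is that minimal models exist for klt pairs of non-negative Kodaira dimension in dimension up to $4$, so hypothesis (1) of Theorem \ref{NewMain} holds when $n\leq 5$, and the corollary follows by applying that theorem after reducing to the model of Lemma \ref{HironakaFlat} and transferring $\kappa\geq 0$ to the base via Proposition \ref{kodEff}. Your write-up simply makes these steps explicit.
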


\begin{theorem}
Assume termination of flips for dlt pairs in dimension $n - 2$. Let $X \rightarrow B$ and $(B, \Lambda)$ as in Lemma \ref{HironakaFlat}. $X$ has a minimal model if and only if $(B,\Lambda)$ has a log minimal model.
\end{theorem}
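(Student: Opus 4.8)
The plan is to combine the canonical bundle formula with the relative theory of Section~\ref{rmm} and the equivalence between minimal models and weak Zariski decompositions. The key identity, valid after passing to the birational models of Lemma~\ref{HironakaFlat}, is $K_X = \pi^*(K_B + \Lambda) + E - G$ with $(B,\Lambda)$ klt of dimension $n-1$. Since the statement is an ``if and only if,'' I would prove each direction separately, and in both directions the bridge is the weak Zariski decomposition: by Remark~\ref{FZAB}(5) and \cite{HaconMoraga}, under termination of flips for dlt (equivalently generalized klt) pairs in dimension $n-2$, a projective variety has a minimal model if and only if its canonical divisor birationally admits a weak Zariski decomposition.

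For the forward direction, suppose $X$ has a minimal model. Then $K_X$ admits a weak Zariski decomposition $K_X = P + N$ with $P$ nef and $N$ effective. I would transfer this to the base by running the argument of Theorem~\ref{EllFZD} in reverse: the divisors $E$ and $G$ are numerically fixed (Theorem~\ref{general} together with \cite[Prop.~1.10]{Fujita86}), so Lemma~\ref{NumFix} lets me strip off $E - G$ and conclude that $\pi^*(K_B + \Lambda)$ admits a weak Zariski decomposition. By Proposition~\ref{ExToZariski} this descends to a birational weak Zariski decomposition of $K_B + \Lambda$, and then \cite[Thm.~1]{HaconMoraga} (using termination in dimension $n-2 = \dim B - 1$) yields a log minimal model for $(B,\Lambda)$.

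For the reverse direction, suppose $(B,\Lambda)$ has a log minimal model. This is precisely the hypothesis of Theorem~\ref{NewFZ}, which produces the explicit Fujita-Zariski decomposition
\[
K_X = \epsilon^*(K_{\bar B} + \bar\Lambda) + \tilde\pi^*\Gamma + E - G
\]
with $\epsilon^*(K_{\bar B} + \bar\Lambda)$ nef and the remaining terms effective. A Fujita-Zariski decomposition is in particular a weak Zariski decomposition (Remark~\ref{ZariskiRelation}), so $K_X$ birationally admits a weak Zariski decomposition; invoking \cite[Thm.~1]{HaconMoraga} once more gives a minimal model for $X$. Alternatively, and more constructively, I would cite Theorem~\ref{NewMain}, whose hypothesis~(2) is exactly satisfied here, to produce the minimal model $\bar X$ directly.

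The main obstacle is bookkeeping the dimension count and the class of pairs for the termination hypothesis: one must check that ``dlt flips in dimension $n-2$'' is the correct input for applying \cite{HaconMoraga} to the $(n-1)$-dimensional pair $(B,\Lambda)$ in the forward direction and to the $n$-dimensional space $X$ in the reverse direction, and that the generalized/dlt distinction matches across the two uses. The numerically-fixed manipulations themselves are routine given Lemma~\ref{NumFix} and Proposition~\ref{ExToZariski}; the delicate point is ensuring that stripping $E - G$ preserves, rather than merely the Fujita, the weaker \emph{weak} Zariski property, which I would handle by keeping the nef part $P$ fixed throughout the reductions exactly as in the proof of Theorem~\ref{EllFZD}.
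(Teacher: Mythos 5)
Your overall strategy --- bridge both directions through a generalized Zariski decomposition, transfer it across the fibration via the canonical bundle formula, and use Theorem \ref{NewFZ}/\ref{NewMain} for the direction starting from the base --- matches the paper's. The paper's proof is: a minimal model of $X$ gives a Fujita--Zariski decomposition of $K_X$ by the arguments of \cite[Thm.\ 1.5]{Birkar1} (Remark \ref{FZAB}); Theorem \ref{EllFZD} transfers this to $K_B+\Lambda$; and Birkar's theorem, with termination in dimension $n-2=\dim B-1$, produces the log minimal model of $(B,\Lambda)$. Conversely, a log minimal model of $(B,\Lambda)$ feeds into the construction of Theorem \ref{NewMain}, which builds the minimal model of $X$ by a \emph{relative} MMP over $\bar B$ (Theorem \ref{RelMM}), not by an absolute existence theorem applied to $X$.

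Your substitution of ``weak Zariski + \cite{HaconMoraga}'' for ``Fujita--Zariski + \cite{Birkar1}'' creates two concrete problems. First, in the reverse direction your primary route invokes \cite[Thm.\ 1]{HaconMoraga} on the $n$-dimensional variety $X$ itself; that requires termination of generalized klt flips in dimension $n-1$, whereas the theorem only assumes termination in dimension $n-2$. Your fallback --- citing Theorem \ref{NewMain} --- is exactly what rescues this, and is what the paper does, precisely because the relative MMP over the $(n-1)$-dimensional base sidesteps the need for termination in dimension $n-1$. Second, in the forward direction you propose to strip $E-G$ from a \emph{weak} Zariski decomposition using the numerically-fixed machinery, but Proposition \ref{NumFix} and Proposition \ref{ExToZariski} are statements about Fujita--Zariski decompositions; for a weak decomposition $K_X=P+N$ the clutching property only gives $N\geq E$-type inequalities and does not by itself show that $\pi^*(K_B+\Lambda)=P+(N-E+G)$ has effective negative part. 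This detour is unnecessary: since a minimal model yields a genuine Fujita--Zariski decomposition (Remark \ref{FZAB}), you can apply Theorem \ref{EllFZD} verbatim and then conclude with \cite[Thm.\ 1.5]{Birkar1} on the $(n-1)$-dimensional pair $(B,\Lambda)$, where the dimension $n-2$ termination hypothesis is exactly what is needed.
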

\begin{proof}
 $K_X$ birationally admits a Fujita-Zariski decomposition if and only if $K_B + \Lambda$ birationally admits a Fujita-Zariski decomposition (Theorem \ref{EllFZD}).
 If $(B,\Lambda)$ has a log minimal model then following the argument in the proof of   Theorem \ref{NewMain} we can construct a minimal model of $X$.  
 
 If $X$ has a minimal model, the arguments of \cite[Thm. 1.5]{Birkar1} show that $K_X$ birationally admits a Fujita-Zariski decomposition. Then $K_B + \Lambda$ birationally admits a Fujita-Zariski decomposition (Theorem \ref{EllFZD}).
  Now since  $\dim(B) = n - 1$, $(B,\Lambda)$ has a log minimal model \cite[Thm. 1.5]{Birkar1}.
\end{proof}

\medspace
\subsection{Abundance and Elliptic Fibrations}\label{Abundance}~

\smallskip

{In the previous section we proved  the compatibility of the Fujita-Zariski decomposition with elliptic fibrations and minimal models. Now we turn our attention to good minimal models. Associated to a good minimal models we have the Nakayama-Zariski decomposition (Definition \ref{ZD}). 
We prove that the Fujita-Zariski decompositon when there is  a good minimal model is also a Nakayama-Zariski decomposition.}

\begin{corollary}
\label{NZ} 
Let   $\pi_0:X_0 \rightarrow B_0$ be an  elliptic fibration, $\dim X_0=n$ and $\kappa(X_0) \geq 0$. 
Assume the existence of good minimal models for $klt$ pairs {of non negative Kodaira dimension} in dimension $n-1$.  Then the Fujita-Zariski decomposition in Theorem \ref{NewFZ}  is also a Nakayama-Zariski decomposition.
\end{corollary}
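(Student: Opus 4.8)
The plan is to reduce the statement to a single computation of the sectional decomposition of $K_X$ and then invoke uniqueness. By Remark \ref{ZariskiRelation} a Nakayama--Zariski decomposition is in particular a Fujita--Zariski decomposition, and by Remark \ref{FZAB} the Fujita--Zariski decomposition is unique; hence it suffices to prove that $K_X$ admits \emph{some} Nakayama--Zariski decomposition whose positive part is $P = \epsilon^*(K_{\bar B}+\bar\Lambda)$. Concretely, I would show that the sectional decomposition satisfies $P_\sigma(K_X) = \epsilon^*(K_{\bar B}+\bar\Lambda)$. Since this divisor is nef, the sectional decomposition is by definition a Nakayama--Zariski decomposition, and because $N_\sigma(K_X) = K_X - P_\sigma(K_X) = \tilde\pi^*\Gamma + E - G$, it coincides with the Fujita--Zariski decomposition produced in Theorem \ref{NewFZ}.

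The new hypothesis enters exactly here. Assuming good minimal models (rather than merely minimal models) in dimension $n-1$, the pair $(\bar B, \bar\Lambda)$ of Theorem \ref{NewFZ} is a \emph{good} minimal model, so $K_{\bar B}+\bar\Lambda$ is semiample and therefore $\phi_{\bar B}^*(K_{\bar B}+\bar\Lambda)$ is semiample, in particular movable, on the smooth variety $\tilde B$. Writing $M := \phi_B^*(K_B+\Lambda) = \phi_{\bar B}^*(K_{\bar B}+\bar\Lambda) + \Gamma$ as in the Negativity Lemma, with $\Gamma$ effective and $\phi_{\bar B}$-exceptional, I would first establish that this is the Nakayama--Zariski decomposition of $M$, i.e. $P_\sigma(M) = \phi_{\bar B}^*(K_{\bar B}+\bar\Lambda)$. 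The bound $N_\sigma(M)\le \Gamma$ is immediate from \cite[Prop. V.1.14]{Nakayama2}, since $M-\Gamma$ is movable, while the reverse inclusion $\Gamma \le N_\sigma(M)$ follows from the $\phi_{\bar B}$-exceptionality of $\Gamma$ (whose image has codimension $\ge 2$, so the technical condition is vacuous) via part (2) of \cite[Prop. 1.10]{Fujita86}-\cite[Lemma 2.16]{GongyoLehmann}, after resolving $\bar B$ if necessary.

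Next I would transport this decomposition up the elliptic fibration $\tilde\pi : X \to \tilde B$, mirroring the proof of Theorem \ref{NewFZ} but replacing each appeal to numerical fixedness by its sectional counterpart. Since $\kappa(X)\ge 0$, Proposition \ref{kodEff} gives that $M$ is pseudo-effective, and $K_X = \tilde\pi^* M + E - G$ where the divisor $E-G$ has $\tilde\pi$-image of dimension $<\dim \tilde B$ (because $E$ does not dominate $B$ by Theorem \ref{general}(6) and $\codim\pi(G)\ge 2$). Part (2) of \cite[Prop. 1.10]{Fujita86}-\cite[Lemma 2.16]{GongyoLehmann} then yields $P_\sigma(K_X) = P_\sigma(\tilde\pi^* M)$, and pullback compatibility of the sectional decomposition under the fibration $\tilde\pi$ (\cite[V]{Nakayama2}) gives $P_\sigma(\tilde\pi^* M) = \tilde\pi^* P_\sigma(M) = \tilde\pi^*\phi_{\bar B}^*(K_{\bar B}+\bar\Lambda) = \epsilon^*(K_{\bar B}+\bar\Lambda) = P$. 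This is the required identification.

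The step I expect to be the main obstacle is verifying the hypotheses of the morphism proposition when peeling off $E-G$: one must check that over every codimension-one component of $\tilde\pi(E-G)$ there is a prime divisor on $X$ dominating it and not supported on $E-G$, which should follow from the simple-normal-crossing and general-position setup of Theorem \ref{general} together with the fact that the general fiber over such a component is a genus-one curve not contained in the ramification locus. A second, more conceptual point is ensuring that $\Gamma$ is exactly the sectional negative part of $M$; this is precisely where semiampleness of $K_{\bar B}+\bar\Lambda$ (the word \emph{good} in good minimal model) is indispensable, since for a merely nef positive part one could not conclude movability of $M-\Gamma$, and the identification $P_\sigma(M) = \phi_{\bar B}^*(K_{\bar B}+\bar\Lambda)$ would fail.
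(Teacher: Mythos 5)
Your argument is correct and follows essentially the same route as the paper: both compute the sectional decomposition of $K_X$ by showing that the degenerate part $\tilde{\pi}^*\Gamma + E - G$ is absorbed into $N_\sigma(K_X)$ via the Fujita/Gongyo--Lehmann degenerate-divisor lemma, and that the semiample pullback $\epsilon^*(K_{\bar{B}} + \bar{\Lambda})$ has trivial $N_\sigma$ (the paper argues $N_\sigma(D) \subset B_-(D) = \emptyset$ where you argue via movability, \cite[Prop.\ V.1.14]{Nakayama2}). The only organizational difference is that you factor the computation through the base $\tilde{B}$ and then invoke $P_\sigma(\tilde{\pi}^*M) = \tilde{\pi}^*P_\sigma(M)$ --- a pullback-compatibility step you leave only loosely cited --- whereas the paper sidesteps it by applying the degenerate-divisor lemma once, directly to $\epsilon\colon X \to \bar{B}$ and the full divisor $\tilde{\pi}^*\Gamma + E - G$.
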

\begin{proof}
Using the notation and set up as in Theorem \ref{NewFZ}, the Fujita-Zariski decomposition of $K_X$ is given by:
\[
K_X =\epsilon^*(K_{\bar{B}} + \bar{\Lambda}) + \tilde{\pi}^*(\Gamma) + E - G
\]
By assumption $K_X$ is pseudoeffective and so it also has a Nakayama-Zariski decomposition:
\[
K_X = P_\sigma(K_X) + N_\sigma(K_X);
\]
 we will show that $\epsilon^*(K_{\bar{B}} + \bar{\Lambda}) = P_\sigma(K_X)$ and $\tilde{\pi}^*(\Gamma) + E - G = N_\sigma(K_X)$.

As $(\bar{B},\bar{\Lambda})$ is a good minimal model, $K_{\bar{B}} + \bar{\Lambda}$  is semiample. From the arguments in Theorem \ref{NewFZ}, we have that $\tilde{\pi}^*(\Gamma) + E - G$ is $\epsilon$-degenerate thus by \cite[Lemma 2.16]{GongyoLehmann} we have:
\begin{align*}
\tilde{\pi}^*(\Gamma) + E - G &\leq N_\sigma(K_X)\\
P_\sigma(\epsilon^*(K_{\bar{B}} + \bar{\Lambda})) &= P_\sigma(K_X) 
\end{align*}
From \cite[Lemma 2.9]{GongyoLehmann}, we have that for any pseudoeffective divisor $D$, we have $N_\sigma(D)$ is contained in $B_ -(D)$ where:
\[
B_-(D) = \bigcup_{\epsilon > 0}Bs(D + \epsilon A)
\]
and $Bs(F)$ denotes the base locus of $F$ and $A$ is any ample divisor. The definition is independent of the choice of $A$. Now as we have that $\epsilon^*(K_{\bar{B}} + \bar{\Lambda})$ is semiample, we must have that $B_-(\epsilon^*(K_{\bar{B}} + \bar{\Lambda})) = \emptyset$, so that $N_\sigma(\epsilon^*(K_{\bar{B}} + \bar{\Lambda})) = 0$. This implies that:
\[
P_\sigma(K_X) = \epsilon^*(K_{\bar{B}} + \bar{\Lambda})
\]
and 
\[
N_\sigma(K_X) = \tilde{\pi}^*(\Gamma) + E - G.
\]
\end{proof}

\begin{corollary}\label{ifAbundance}
[Proposition \ref{kodEff}, Theorem \ref{MM}]
Let $\pi: Y \rightarrow T $ be an elliptic fibration
\begin{enumerate}
\item If  $\dim (Y) = 4$ there exists a birational equivalent fibration $\bar{X} \to \bar B$, $\bar{X}$ with $\Q$-factorial  terminal singularities, $(\bar B, \bar \Lambda)$ with  klt singularities such that $K_{\bar X}  \equiv \bar{\pi}^*(K_B + \bar \Lambda)$ and either 
$Y$ is birationally a Mori fiber space or  $\bar{X}$ is a good minimal model.
\item 
 If $\kappa(Y) = n - 1$, there exists  a birationally equivalent fibration $\bar{\pi}:\bar{X} \rightarrow \bar{B}$ such that  $\bar{X}$ is a good minimal model, $K_{\bar{X}} \equiv_{\mathbb{Q}} \bar{\pi}^*(K_{\bar{B}} + \bar{\Lambda})$ and $(\bar{B},\bar{\Lambda})$ has klt singularities.
\end{enumerate}
\end{corollary}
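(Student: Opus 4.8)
The plan is to reduce both statements to the $(n-1)$-dimensional klt pair $(B,\Lambda)$ produced by Lemma \ref{HironakaFlat} and Theorem \ref{general}, and then to read off the birational geometry of $Y$ from that of the base pair using Proposition \ref{kodEff}, which gives $\kappa(Y)=\kappa(K_B+\Lambda)$. Throughout I use that $(B,\Lambda)$ is klt of dimension $n-1$ and that $Y$ is birational to the total space $X$ of Theorem \ref{general}. The common mechanism that transfers information from the base to $\bar X$ is the numerical identity $K_{\bar X}\equiv_{\mathbb Q}\bar\pi^*(K_{\bar B}+\bar\Lambda)$, along which semiampleness descends: if $K_{\bar B}+\bar\Lambda$ is semiample then so is its pullback $K_{\bar X}$.

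For part (1), $\dim Y=4$ forces $\dim B=3$, so the full strength of Theorem \ref{kltmin} is available on the base: klt flips exist and terminate in dimension $3$ (Theorem \ref{kltmin}(1),(2)) and abundance holds for klt pairs of dimension $\leq 3$ (Theorem \ref{kltmin}(4)). First I would split on the sign of $\kappa(Y)$. If $\kappa(Y)\geq 0$, then $\kappa(K_B+\Lambda)\geq 0$ by Proposition \ref{kodEff}, and since klt flips terminate in dimension $3$ I apply Theorem \ref{MM}(1) to obtain a birationally equivalent fibration $\bar\pi:\bar X\to\bar B$ with $\bar X$ $\mathbb Q$-factorial terminal, $(\bar B,\bar\Lambda)$ klt and $K_{\bar X}\equiv\bar\pi^*(K_{\bar B}+\bar\Lambda)$. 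Here $(\bar B,\bar\Lambda)$ is a log minimal model of the klt $3$-fold pair $(B,\Lambda)$, so $K_{\bar B}+\bar\Lambda$ is nef; abundance in dimension $3$ upgrades it to semiample, whence $K_{\bar X}$ is semiample and $\bar X$ is a good minimal model. If instead $\kappa(Y)=-\infty$, then $\kappa(K_B+\Lambda)=-\infty$, and non-vanishing for klt $3$-folds (part of abundance, Theorem \ref{kltmin}(4)) forces $K_B+\Lambda$ to be not pseudo-effective, which is exactly the non-pseudo-effectivity hypothesis of Theorem \ref{MM}(2) read through the birational model $(B,\Lambda)$. Theorem \ref{MM}(2) then delivers the fibration $\bar X\to\bar B$ with the stated singularities and the conclusion that $Y$ is birationally a Mori fiber space.

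For part (2), the hypothesis $\kappa(Y)=n-1$ combined with Proposition \ref{kodEff} gives $\kappa(K_B+\Lambda)=n-1=\dim B$, so $(B,\Lambda)$ is a klt pair of log general type. This is the decisive observation: by Theorem \ref{kltmin}(5) a general type klt pair admits a \emph{good} minimal model $(\bar B,\bar\Lambda)$ unconditionally (BCHM), with $K_{\bar B}+\bar\Lambda$ big and semiample, so no termination hypothesis is needed for the base. With the minimal model of this particular pair in hand, the construction of Theorems \ref{NewFZ} and \ref{NewMain} applies (their hypotheses are invoked only through the existence of a minimal model for $(B,\Lambda)$, which we now have) and produces $\bar\pi:\bar X\to\bar B$ with $\bar B$ normal and $\mathbb Q$-factorial, $(\bar B,\bar\Lambda)$ klt, $\bar X$ terminal and $K_{\bar X}\equiv_{\mathbb Q}\bar\pi^*(K_{\bar B}+\bar\Lambda)$ nef. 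Finally, semiampleness of $K_{\bar B}+\bar\Lambda$ pulls back to semiampleness of $K_{\bar X}$, so $\bar X$ is a good minimal model.

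The applications of Theorems \ref{MM}, \ref{NewFZ} and \ref{NewMain} are routine, since those results are already engineered to furnish the fibration $\bar X\to\bar B$ and the pullback formula. The real content, and the main obstacle, is the passage from \emph{minimal} to \emph{good minimal}: in part (1) this rests entirely on abundance for klt $3$-folds, which both upgrades the base minimal model to a good one and, via non-vanishing, underwrites the pseudo-effective/Mori dichotomy; in part (2) it rests on the base pair being of log general type, so that goodness of its minimal model is automatic from BCHM. In each case the conclusion follows because semiampleness descends along $K_{\bar X}=\bar\pi^*(K_{\bar B}+\bar\Lambda)$.
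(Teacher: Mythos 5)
Your proposal is correct and follows essentially the same route as the paper: the paper's proof is a one-line citation of Proposition \ref{kodEff}, Theorem \ref{kltmin} and Theorem \ref{MM}, and your argument simply expands that citation, using $\kappa(Y)=\kappa(K_B+\Lambda)$ to reduce to the $(n-1)$-dimensional base pair, invoking termination, abundance and non-vanishing in dimension $3$ for part (1) and the BCHM good-minimal-model statement for log general type pairs in part (2), and pulling semiampleness back along $K_{\bar X}\equiv_{\mathbb Q}\bar\pi^*(K_{\bar B}+\bar\Lambda)$.
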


\begin{proof} It follows from Proposition \ref{kodEff}, Theorem \ref{kltmin} and Theorem \ref{MM}. See also  {\cite[Thm. 4.4]{Lai2011}}, {\cite[Cor. 4.5]{GongyoLehmann}}.
\end{proof}

\section{The dimension of the fibers and equidimensionality, up to birational equivalence}\label{equid}
   While it is easy to fabricate  examples of minimal elliptic threefolds which are not equidimensional starting from ones which are,  many  smooth Calabi-Yau threefolds have  a natural  elliptic fibration which is not equidimensional. These  examples  were  mostly found during searches to provide evidence that very large classes of Calabi-Yau threefolds are birationally  elliptically fibered  \cite{BraunV2013, HuangTaylorFibrations2019}. 
 
 If $\dim (X) =3$, there  always exists a birational equivalent elliptic fibration, minimal or  a Mori fiber space, which is equidimensional \cite[Cor. 8.2]{Grassi95}.  
By contrast  there is an example of a non-equidimensional elliptic fourfold  for which it is not known if an  equidimensional model exists \cite{codimthree}.
Examples of local Calabi-Yau fourfolds in  generalized Weierstrass form  with  possibly non-equidimensional elliptic fibrations are also described in \cite{LawrieSNSteroids}.
 In the  example  in \cite{codimthree} a particular fiber contains a smooth surface and the fibration is otherwise  equidimensional.  
 Corollary \ref{equidim3}  proves that this is what it can  be generally expected.



Theorem \ref{KawNakayama} and Corollary \ref{equidim3} are stronger than what one could obtain from a log-minimal model run, in at least two aspects. First,  the singularities  in our models  are terminal, while a minimal model run gives log terminal singularities, as  in Example \ref{Ex1.1}.   See  \cite{GrassiWeigand} for an analysis of  terminal versus log-terminal  singularity in this context. In addition we prove that not only there are no exceptional divisors in the fibers outside a codimension $3$ set, but the fibration is  equidimensional there.

\begin{theorem}
\label{KawNakayama}
Let $f:X \rightarrow B$ be an elliptic fibration such that $X$ has at worse  $\mathbb{Q}$-factorial  terminal singularities and $K_X = f^*(L)$ where $L$ is a $\mathbb{Q}$-Cartier divisor on $B$. Then there exists a birationally equivalent elliptic fibration $h: Y \rightarrow T$ and a $\Q$-divisor $\Lambda_T$ such that:
\begin{enumerate}
	\item $Y$ has at worse  $\mathbb{Q}$-factorial  terminal singularities.
	\item $K_Y = h^*(K_T+ \Lambda_T)$ where $(T, \Lambda_T )$ is klt. 
	\item There is no  effective divisor $E $ in $Y$ such that $\codim h(E) \geq 2$.
\end{enumerate}
\end{theorem}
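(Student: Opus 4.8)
The plan is to isolate the divisors that are contracted too far in the base and remove them by a relative minimal model program, using the hypothesis $K_X=f^*L$ to keep the canonical class relatively trivial throughout. Let $E_1,\dots,E_k$ be the prime divisors of $X$ with $\codim_B f(E_j)\geq 2$, and set $D=\sum_j E_j$. Since $X$ is $\Q$-factorial terminal and terminality is an open condition, for $0<\epsilon\ll 1$ the pair $(X,\epsilon D)$ is terminal, and $K_X+\epsilon D\equiv_B \epsilon D$ because $K_X=f^*L$ is $f$-trivial. Thus a relative MMP for $(X,\epsilon D)$ over $B$ is a $D$-MMP over $B$, and the goal is to show it contracts $D$ entirely.

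First I would produce the relevant model. The general fiber of $(X,\epsilon D)\to B$ is a smooth elliptic curve with trivial boundary (the components of $D$ all lie over loci of codimension $\geq 2$), and hence has a good minimal model. Therefore the relative good minimal model theorem of \cite{HaconXu} (Theorem 2.12, generalizing \cite{Lai2011}, with hypothesis (2) supplied by \cite{BCHM})---exactly the input used in Theorem \ref{RelMM}---yields a birationally equivalent fibration $g\colon Y\to B$ with $(Y,\epsilon D_Y)$ $\Q$-factorial terminal and $K_Y+\epsilon D_Y$ both $g$-nef and $g$-semiample, where $D_Y$ is the strict transform of $D$. Every step of this program is $(K_X+\epsilon D)$-negative, hence $D$-negative, and is trivial against $K_X\equiv_B 0$; so $K_Y\equiv_B 0$ is preserved and $K_Y+\epsilon D_Y\equiv_B\epsilon D_Y$ remains $g$-nef. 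In particular $D_Y$ is an effective $g$-vertical divisor, supported over loci of codimension $\geq 2$, that is $g$-nef.

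The heart of the argument, and the step I expect to be the main obstacle, is to deduce $D_Y=0$ from the $g$-nefness of $D_Y$. This is the fiber-wise negativity already exploited in the proof of Theorem \ref{RelMM}(i): a nonzero effective divisor whose image has codimension $\geq 2$ is never relatively nef. Concretely, I would take a component $E$ of $D_Y$ whose image $Z=g(E)$ is maximal among the images of the components, choose a general curve $\Gamma\subset B$ through a general point $p\in Z$ (general position avoids the other, incomparable images, and no strictly larger image occurs by maximality), and restrict to the elliptic surface $S=g^*\Gamma$. Over $p$ the fiber $S_p$ is not contained in $\operatorname{Supp}(D_Y)$---the component carrying the generic elliptic fiber comes from the part of $Y$ dominating $B$---so Zariski's lemma on the negative semidefinite fiber intersection form produces a curve $\gamma\subset S_p$ with $D_Y\cdot\gamma<0$, contradicting $g$-nefness. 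The delicate points are handling several components with the same image $Z$ (where one invokes negative definiteness of proper subsystems of the fiber components) and checking that $\gamma$ is chosen to meet only the relevant components; both are local over $B$ and reduce to the surface computation in Theorem \ref{RelMM}(i).

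Granting $D_Y=0$, the conclusion follows. Then $(Y,\epsilon D_Y)=(Y,0)$ is $\Q$-factorial terminal, giving (1); and $K_Y$ is $g$-semiample and $g$-numerically trivial, so a suitable multiple is $g^*M$ and hence $K_Y=g^*L_Y$ for a $\Q$-Cartier divisor $L_Y$ on $B$, giving (2) with $T=B$ and $h=g$. Finally, the only divisors of $X$ with image of codimension $\geq 2$ are the components of $D$, all now contracted; flips are isomorphisms in codimension one and divisorial contractions only remove divisors, so over $B$ no divisor of $Y$ can have image of codimension $\geq 2$, which is (3).
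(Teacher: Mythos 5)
There is a genuine gap, and it is exactly the phenomenon the paper warns about in Section 5 and Example \ref{Ex1.1}. Your plan is to contract the divisors $E_j$ with $\codim_B f(E_j)\geq 2$ by running a relative MMP for $(X,\epsilon D)$ over the \emph{fixed} base $B$, and to conclude with $T=B$. But because $K_X=f^*L$ is numerically trivial over $B$, every divisorial contraction $\phi\colon X\to Y$ occurring in any MMP over $B$ is crepant for $K_X$: writing $K_X=\phi^*K_Y+a(E_j,Y,0)E_j$ and noting $K_X=\phi^*g^*L$ with $K_Y=\phi_*K_X=g^*L$, the negativity lemma forces $a(E_j,Y,0)=0$. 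So the moment a component of $D$ is contracted, $Y$ acquires an exceptional divisor of discrepancy zero and is canonical but \emph{not} terminal, contradicting conclusion (1). Terminality of the pair $(X,\epsilon D)$ does not save you: it only controls divisors exceptional over $X$, and once $E_j$ is contracted the MMP only guarantees $a(E_j,Y,\epsilon D_Y)>-\epsilon$. Moreover your own (correct) "very exceptional nef implies zero" step shows that if $D\neq 0$ the MMP \emph{must} perform such divisorial contractions to terminate, so the failure is unavoidable. This is why, per the paper's discussion, one can get an equidimensional relatively minimal model with klt singularities over $B$, but in general not a terminal one, and why the statement of Theorem \ref{KawNakayama} allows a new base $T\neq B$.

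The paper's proof is built to sidestep precisely this: it never contracts a divisor. Given $E$ with $\codim f(E)\geq 2$, it chooses a divisor $C\subset B$ containing $f(E)$, splits $\pi^*(C)=D+F$ with $F$ the maximal part satisfying $f_*\mathcal{O}_X(F)=0$, and runs the relative MMP for $(X,\epsilon D)$ where this $D$ is $f$-\emph{movable} and maps to codimension one. Since $K_X\equiv_B 0$, that MMP is a sequence of flops (no divisorial contractions, terminality preserved), after which $G=\phi_*D$ is $g$-nef, hence $g$-semiample by Nakayama, and the induced morphism $h\colon Y\to T$ factors $g$ through a strictly larger base $\psi\colon T\to B$. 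The offending divisors are not removed; rather the base is modified so that they dominate divisors of $T$, and induction on $\rho(X/B)$ (which strictly drops since $\psi$ is not an isomorphism) terminates the process. If you want to salvage your write-up, you would need to replace "contract the $E_j$" with this base-modification mechanism; as written, the argument cannot deliver both equidimensionality over $B$ and terminal singularities.
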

\begin{proof}
Let $\rho(X/B)$ be the rank of the relative Neron-Severi group of $f:X \rightarrow B$; it  has finite rank and we proceed by induction on this invariant. If $E$ is an effective divisor on $X$ such that $f(E)$ has codimension $\geq 2$, then we can take an effective Cartier divisor without fixed component, $C$, on $B$ that contains $f(E)$ and we have that:
\[
f^*(C) = D + F
\]
where $F$ is the maximal component of $f^*(C)$ such $\codim (\pi(F)) \geq 2$ and $D = f^*(C) - F$. Then $Supp(E) \leq Supp(F)$ and $\codim h(D)=1.$
 $K_X + D$ is $f$-nef  if and only if  $D$ is $f$-nef.    
 
If $D$ is not $f$-nef,  the $f$-minimal program on the log pair $(X, \epsilon D)$ for $0 < \epsilon \ll 1$ 
produces  a relatively minimal pair $(Y, \epsilon D')$ over $B$  \cite[Thm. 2.12]{HaconXu}. 
Furthermore $D$ is $f$-movable \cite[Def. 1.1]{Kawamata97} so that running the relative log minimal model program on $(X, \epsilon D)$ over $B$ results in a sequence of $D$-flips. As $K_X$ is numerically trivial over $B$, we have that this sequence of $D$-flips is a sequence of flops. We have the diagram:
\[
\begin{tikzcd}
(X, \epsilon D) \arrow[d, "f"'] \arrow[r,dashed, "\phi"]& (Y, \epsilon D') \arrow[dl, "g"']\\
B &\\
\end{tikzcd}
\]
where $K_Y = g^*(L)$, $D' = \phi_*D$ and $K_Y + \epsilon D'$ is $g$-nef. So if $D$ was not $f$-nef we can obtain a birational model $Y$ byt a sequence of flops and we can reduce to the case where we have $g:Y \rightarrow B$ with a $g$-nef dvisor $D'$.

If $D'$ is $g$-nef, then $D'$ is $g$-semiample  \cite[Thm. A.4]{Nakayama2002}, and there exists a morphism $h:Y \rightarrow T$ that factors as follows:
\[
\begin{tikzcd}
X \arrow[d, "f"'] \arrow[r,dashed, "\phi"]& Y \arrow[dl, "g"'] \arrow[d, "h"]\\
B & T \arrow[l, "\psi"]\\
\end{tikzcd}
\]
Since  $K_Y = g^*(L) = h^*(\psi^*(L))$,  and by letting $L_Y = \psi^*(L)$  we have that $K_Y=h^*(L_Y)$,  $L_Y$ $\mathbb{Q}$-divisor on $T$. Furthermore, $\psi$ is not an isomorphism since $D'$ is numerically trivial over $T$ but not over $B$. $\dim(T)=\dim (B)$
since $D' \leq g^*(C)$ and so is not $f$-ample. This implies that $\rho(Y/T) < \rho(X/B)$ and that  $h:Y \rightarrow T$ also satisfies the hypothesis of the theorem. As the rank of the Neron-Severi group is finite, 
 this process must eventually terminate. 
\end{proof}

\begin{corollary}\label{equidim3}
\label{Equidimension}
Let $\pi_0: X_0 \rightarrow B_0$ be an elliptic fibrations as in Theorem \ref{MM} or Theorem \ref{NewMain}, then there exists a birationally equivalent elliptic fibration $h: Y \rightarrow T$, where $Y$ is a relatively minimal model over $T$ and $h$ is equidimensional over an open set $U \subset T$ whose complement has codimension $\geq 3$. If furthermore we have that $\kappa(K_{X_0}) \geq 0$, we can take $Y$ to be a minimal model.
\end{corollary}

\begin{proof}
From Theorem \ref{MM} or \ref{NewMain}, we obtain a birationally equivalent elliptic fibration $\bar{\pi}: \bar{X} \rightarrow \bar{B}$ that satisfies the hypothesis of Theorem \ref{KawNakayama},  namely   a birationally equivalent fibration $h: Y \rightarrow T$ such that there is no effective divisor $E $ in $Y$ such that $\codim h(E) \geq 2$.
If furthermore  $\kappa(K_{X_0}) \geq 0$, we can take $\bar{X}$ to be a minimal model and, since $Y$ is obtained via a sequence of flops from $\bar{X}$, we have that $Y$ is also a minimal model.

 We will prove that the  general fibers of $h$ over subvarieties of codimension $\leq 2$ are $1$-dimensional. 
Let $S \subset T$ be an irreducible closed subvariety of $T$. If $S$ has codimension $1$ then we have that $h^{-1}(S)$ has codimension $1$,  and a general fiber over $S$ is $1$ dimensional. Let  now $S$ be of codimension $2$. Then $h^{-1}(S)$ has codimension $\leq 2$. Since no divisors of $Y$ maps down to space of codimension $2$, we must have that $h^{-1}(S)$ has codimension $2$. By counting the dimensions, we have that the general fibers over $S$ is
$1$ dimensional. Thus general fibers of $h$ over subvarieties of codimension $\leq 2$ are $1$-dimensional. Thus, we have that $h$ is equidimensional over some open set $U \subset T$ whose complement has codimension $\geq 3$.
\end{proof}

\bibliographystyle{plain}
\bibliography{refs}{}

\begin{thebibliography}{10}

\bibitem{Birkar2}
Caucher Birkar.
\newblock On existence of log minimal models.
\newblock {\em Compos. Math.}, 146(4):919--928, 2010.

\bibitem{Birkar1}
Caucher Birkar.
\newblock On existence of log minimal models and weak {Z}ariski decompositions.
\newblock {\em Math. Ann.}, 354(2):787--799, 2012.

\bibitem{BCHM}
Caucher Birkar, Paolo Cascini, Christopher~D. Hacon, and James McKernan.
\newblock Existence of minimal models for varieties of log general type.
\newblock {\em J. Amer. Math. Soc.}, 23(2):405--468, 2010.

\bibitem{BraunV2013}
Volker Braun.
\newblock Toric elliptic fibrations and {F}-theory compactifications.
\newblock {\em J. High Energy Phys.}, (1):016, front matter + 34, 2013.

\bibitem{codimthree}
Philip Candelas, Duiliu-Emanuel Diaconescu, Bogdan Florea, David~R. Morrison,
  and Govindan Rajesh.
\newblock Codimension-three bundle singularities in {F}-theory.
\newblock {\em J. High Energy Phys.}, (6):no. 14, 20, 2002.

\bibitem{CutKM}
Steven~D. Cutkosky.
\newblock Zariski decomposition of divisors on algebraic varieties.
\newblock {\em Duke Math. J.}, 53(1):149--156, 1986.

\bibitem{DolgachevGross}
Igor Dolgachev and Mark Gross.
\newblock Elliptic threefolds. {I}. {O}gg-{S}hafarevich theory.
\newblock {\em J. Algebraic Geom.}, 3(1):39--80, 1994.

\bibitem{Fujita86}
Takao Fujita.
\newblock Zariski decomposition and canonical rings of elliptic threefolds.
\newblock {\em J. Math. Soc. Japan}, 38(1):19--37, 1986.

\bibitem{GongyoLehmann}
Yoshinori Gongyo and Brian Lehmann.
\newblock Reduction maps and minimal model theory.
\newblock {\em Compos. Math.}, 149(2):295--308, 2013.

\bibitem{Grassi91}
Antonella Grassi.
\newblock On minimal models of elliptic threefolds.
\newblock {\em Math. Ann.}, 290(2):287--301, 1991.

\bibitem{Grassi95}
Antonella Grassi.
\newblock Log contractions and equidimensional models of elliptic threefolds.
\newblock {\em J. Algebraic Geom.}, 4(2):255--276, 1995.

\bibitem{GrassiWeigand}
Antonella Grassi and Weigand Timo.
\newblock {On topological invariants of algebraic threefolds with ($\mathbb
  Q$-factorial) singularities,}.
\newblock arXiv:180402424, being revised, 2018.

\bibitem{HaconMoraga}
Christopher~D. Hacon and Joaquín Moraga.
\newblock On weak zariski decompositions and termination of flips.
\newblock May 2018.

\bibitem{HaconXu}
Christopher~D. Hacon and Chenyang Xu.
\newblock Existence of log canonical closures.
\newblock {\em Invent. Math.}, 192(1):161--195, 2013.

\bibitem{HanLi}
Jingjun {Han} and Zhan {Li}.
\newblock {Weak Zariski decompositions and log terminal models for generalized
  polarized pairs}.
\newblock {\em arXiv e-prints}, page arXiv:1806.01234, Jun 2018.

\bibitem{Hironaka75}
Heisuke Hironaka.
\newblock Flattening theorem in complex-analytic geometry.
\newblock {\em Amer. J. Math.}, 97:503--547, 1975.

\bibitem{HuangTaylorFibrations2019}
Huang and Taylor W.
\newblock {Fibration structure in toric hypersurface Calabi-Yau threefolds}.
\newblock arXiv:190709482, 2019.

\bibitem{Kawamata1}
Yujiro Kawamata.
\newblock Kodaira dimension of certain algebraic fiber spaces.
\newblock {\em J. Fac. Sci. Univ. Tokyo Sect. IA Math.}, 30(1):1--24, 1983.

\bibitem{CKawM}
Yujiro Kawamata.
\newblock The {Z}ariski decomposition of log-canonical divisors.
\newblock In {\em Algebraic geometry, {B}owdoin, 1985 ({B}runswick, {M}aine,
  1985)}, volume~46 of {\em Proc. Sympos. Pure Math.}, pages 425--433. Amer.
  Math. Soc., Providence, RI, 1987.

\bibitem{Kawamata3kltTerm}
Yujiro Kawamata.
\newblock Termination of log flips for algebraic {$3$}-folds.
\newblock {\em Internat. J. Math.}, 3(5):653--659, 1992.

\bibitem{Kawamata97}
Yujiro Kawamata.
\newblock On the cone of divisors of {C}alabi-{Y}au fiber spaces.
\newblock {\em Internat. J. Math.}, 8(5):665--687, 1997.

\bibitem{Kawamata2009}
Yujiro Kawamata.
\newblock Finite generation of a canonical ring.
\newblock In {\em Current developments in mathematics, 2007}, pages 43--76.
  Int. Press, Somerville, MA, 2009.

\bibitem{Kodaira1}
Kunihiko Kodaira.
\newblock On compact analytic surfaces. {II}, {III}.
\newblock {\em Ann. of Math. (2) 77 (1963), 563--626; ibid.}, 78:1--40, 1963.

\bibitem{KollarEllipticSurvey}
J.~Koll\'{a}r.
\newblock Deformations of elliptic {C}alabi-{Y}au manifolds.
\newblock In {\em Recent advances in algebraic geometry}, volume 417 of {\em
  London Math. Soc. Lecture Note Ser.}, pages 254--290. Cambridge Univ. Press,
  Cambridge, 2015.

\bibitem{Kollar92}
J{\'a}nos Koll{\'a}r.
\newblock {\em Flips and abundance for algebraic threefolds}.
\newblock Soci\'et\'e Math\'ematique de France, Paris, 1992.
\newblock Papers from the Second Summer Seminar on Algebraic Geometry held at
  the University of Utah, Salt Lake City, Utah, August 1991, Ast{\'e}risque No.
  211 (1992).

\bibitem{KollarMori}
J\'anos Koll\'ar and Shigefumi Mori.
\newblock {\em Birational geometry of algebraic varieties}, volume 134 of {\em
  Cambridge Tracts in Mathematics}.
\newblock Cambridge University Press, Cambridge, 1998.
\newblock With the collaboration of C. H. Clemens and A. Corti, Translated from
  the 1998 Japanese original.

\bibitem{Lai2011}
Ching-Jui Lai.
\newblock Varieties fibered by good minimal models.
\newblock {\em Math. Ann.}, 350(3):533--547, 2011.

\bibitem{LawrieSNSteroids}
Craig Lawrie and Sakura Schäfer-Nameki.
\newblock {The Tate Form on Steroids: Resolution and Higher Codimension
  Fibers}.
\newblock {\em JHEP}, 04:061, 2013.

\bibitem{CKMor}
Atsushi Moriwaki.
\newblock Semiampleness of the numerically effective part of {Z}ariski
  decomposition.
\newblock {\em J. Math. Kyoto Univ.}, 26(3):465--481, 1986.

\bibitem{Moriwaki87}
Atsushi Moriwaki.
\newblock Torsion freeness of higher direct images of canonical bundles.
\newblock {\em Math. Ann.}, 276(3):385--398, 1987.

\bibitem{Nakayama88}
Noboru Nakayama.
\newblock On {W}eierstrass models.
\newblock In {\em Algebraic geometry and commutative algebra, {V}ol.\ {II}},
  pages 405--431. Kinokuniya, Tokyo, 1988.

\bibitem{Nakayama2002}
Noboru Nakayama.
\newblock Local structure of an elliptic fibration.
\newblock In {\em Higher dimensional birational geometry ({K}yoto, 1997)},
  volume~35 of {\em Adv. Stud. Pure Math.}, pages 185--295. Math. Soc. Japan,
  Tokyo, 2002.

\bibitem{Nakayama2}
Noboru Nakayama.
\newblock {\em Zariski-decomposition and abundance}, volume~14 of {\em MSJ
  Memoirs}.
\newblock Mathematical Society of Japan, Tokyo, 2004.

\bibitem{Shoku92}
V.~V. Shokurov.
\newblock Three-dimensional log perestroikas.
\newblock {\em Izv. Ross. Akad. Nauk Ser. Mat.}, 56(1):105--203, 1992.

\bibitem{Shoku03}
V.~V. Shokurov.
\newblock Prelimiting flips.
\newblock {\em Tr. Mat. Inst. Steklova}, 240(Biratsion. Geom. Linein. Sist.
  Konechno Porozhdennye Algebry):82--219, 2003.

\bibitem{DW}
David Wen.
\newblock Birational models of fibrations of log {C}alabi-{Y}au varieties.
\newblock (in preparation).

\end{thebibliography}

\end{document}